\date{\today}
\title{Classification of Lipschitz simple function germs}
\author{Nhan Nguyen, Maria Ruas and Saurabh Trivedi}
\address{Departamento de Matem\'atica, ICMC - Universidade de S\~ao Paulo, 134560-970, S\~ao Carlos, S.P, Brasil}
\email{nguyenxuanvietnhan@gmail.com}
\email{maasruas@icmc.usp.br}
\email{saurabh.trivedi@gmail.com}
\newtheorem{thm}{Theorem}[section]
\newtheorem{lem}[thm]{Lemma}
\theoremstyle{theorem}
\newtheorem{cor}[thm]{Corollary}
\newtheorem{example}[thm]{Example}
\newcommand{\bb}{\mathbb}
\newcommand{\al}{\mathcal}
\newcommand{\ak}{\mathfrak}
\newcommand{\fs}{\mathscr}
\newcommand{\bff}{\mathbf}
\newcommand{\lct}{{\rm lct}}
\newcommand{\supp}{{\rm supp}}
\theoremstyle{remark}
\newtheorem{rem}[thm]{Remark}
\begin{document}
\maketitle
\begin{abstract} It is known that the bi-Lipschitz right classification of function germs admit moduli. In this article we introduce a notion called the Lipschitz simple function germs and present a full classification in the complex case. A surprising consequence of our result is that  a function germ is Lipschitz modal if and only if it deforms to the smooth unimodal family of singularities called $J_{10}$ in Arnold's list.
\end{abstract}

\section{Introduction}

After Thom and Mather's foundational works on classification of elementary catastrophes and singularities of mappings, Arnold \cite{Arnold1} presented a complete classification of simple, unimodal and bimodal function germs under the right equivalence. The simple singularities include two series $A_k, k\geq 1, D_k, k \geq 4$ and three exceptional germs called $E_6, E_7, E_8$, together called the $ADE$-singularities. Among the modal germs the one with the lowest codimension is $T_{3,3,3}$ of corank $3$ and codimension $8$. For the corank 2 modal germs $X_9$ with codimension 9 has the lowest codimension. In positive characteristics, the simple singularities were classified recently by Greuel--Nguyen \cite{gn} and unimodal and bimodal by Nguyen \cite{Duc}. 

It is well known from a result of Mostowski \cite{Mostowski} that the bi-Lipschitz right equivalence of complex analytic set germs does not admit moduli. This result was extended by Parusi\'nski \cite{Parusinski} for subanalytic sets and by Valette and the first author \cite{nv} for definable sets in polynomially bounded o-minimal structures. However, this is not true for function germs. Henry and Parusi\'nski \cite{hp1} showed that the bi-Lipschitz right classification of function germs does admit moduli. They constructed bi-Lipschitz invariants that vary continuously in the family
$f_{\lambda}(x,y) = x^3 - \lambda^2 x y^4 + y^6$ of corank $2$ function germs. This led us to ask whether there exists a classification for the bi-Lipschitz equivalence of function germs analogous to the smooth case. 

In this article we introduce the notion of Lipschitz simple function germs and give a complete classification of Lipschitz simple singularities. Roughly speaking, a finitely determined germ $f$ is said to be Lipschitz simple if there is a neighbourhood around a sufficiently high $k$-jet of $f$ that intersects only finitely many bi-Lipschitz equivalence classes. A germ is said to be modal if it is not simple. Notice that a smooth simple germ is also Lipschitz simple and so the $ADE$-singularities are Lipschitz simple. We observe that a germ is Lipschitz modal if it deforms to a Lipschitz modal family. This observation provides us with a strategy to list all Lipschitz simple germs which we describe below.

In Section \ref{section4} we prove that the rank and corank of function germs are Lipschitz invariants. We would like to emphasize that these seemingly simple results about Lipschitz equivalence were not known before and their proofs use some very recent work of Sampaio \cite{Sampaio}. 

While bi-Lipschitz invariance of rank and corank allows us to distinguish between the bi-Lipschitz type of certain germs, there are several germs in Arnold's list with same corank and codimensions that, a priori, could be bi-Lipschitz equivalent. In Section \ref{sec5}, we prove that the multiplicity and the singular set of  the algebraic tangent cone of the non-quadratic part of a germ obtained after applying the splitting lemma are also bi-Lipschitz invariants. This allows us to differentiate the bi-Lipschitz type of many germs. However, it is still not enough to write the Lipschitz normal forms as in Section \ref{sec9}. The only pair left in our analysis is $(Q_{11},S_{11})$. This pair is shown to be of different bi-Lipschitz type by using a result of Bivi\`a-Ausina and Fukui \cite{Fui} on the bi-Lipschitz invariance of the log canonical threshold of ideals.

We show in Section \ref{section7} that $J_{10}: x^3 + t xy^4 + y^6 + Q$ where $Q$ is a non-degenerate quadratic form,  is Lipschitz modal. We would like to remark that the result does not immediately follow from Henry--Parusi\'nski's \cite{hp1} result. This result implies that any germ that deforms to $J_{10}$ is Lipschitz modal. It is worth mentioning that the result follows trivially if there were a Lipschitz version of Splitting lemma. Unfortunately, this is not known. We observe later that $J_{10}$ is the Lipschitz modal singularity of the smallest codimension. 

The main result is contained in Section \ref{sec9} where we present a complete list of Lipschitz simple germs (Theorem \ref{thm_corank2} and Theorem \ref{thm_corank3}). This uses the fact that most singularities in Arnold's list deform to $J_{10}$ (see Lemma \ref{lem_deform_j10}). For example, all smooth bimodal singularities deform to $J_{10}$. Therefore, they are Lipschitz modal. The rest of the singularities are shown to be Lipschitz simple by proving that they belong to a bi-Lipschitz trivial family (see Section \ref{section8}) and are not adjacent to any class of Lipschitz modal singularities. The bi-Lipschitz triviality is proved by  constructing vector fields satisfying the Thom-Levine criterion. Our method is inspired by and improves upon the works of Abderrahmane \cite{Abder}, Fernandes and Ruas \cite{fr}, Saia et al. \cite{Saiaetal} and Humberto and Fernandes \cite{Humbertof}. The non-adjacencies follow from a result of Brieskorn \cite{Brieskorn}. 

We would like to remark that in the paper we only consider the Lipschitz classification of complex simple germs. The real case turned out to be more delicate and requires more careful analysis. For example, in the real case the family $J_{10}$ is Lipschitz modal if $t \leq 0$ (see Section \ref{sec5}) and is Lipschitz simple when $t>0$ (see \cite{hp2}), and this must be taken into account while checking deformations. Moreover, the Milnor number is not a bi-Lipschitz invariant in the real case; see example in Section \ref{sec2}.

Throughout the paper,  $\|.\|$ denotes the Euclidean norm, and $|.|$ denotes the absolute value of real numbers or the modulus of complex numbers. Let $\bb K = \bb C$ or $ \bb R$, and let $f, g : \bb K^n \supset U \to \bb K$ be two functions. We write $f \lesssim g$ (or $g \gtrsim  f$) if there is a $C>0$ such that $|f(x)|\leq C |g(x)|$ for all $x \in U$. We write $f \sim g$ if $f \lesssim g$ and $g \lesssim f$.  Suppose that $U$ is a neighbourhood of a point $x_0$, we write $f = o(g)$  or $f \ll g$ at $x_0$ if $\lim_{x\to x_0} \frac{|f(x)|}{|g(x)|} = 0$. And, $f = O(g)$ or $f \gtrsim g$ at $x_0$ if $\lim_{x\to x_0} \frac{|f(x)|}{|g(x)|}$ is bounded.

 \section{Preliminaries} \label{sec2}

Let $\bb K = \bb R$ or $\bb C$. Given a smooth germ $f: (\bb K^n, 0) \to (\bb K, 0)$, we denote by $Df(x)$ the derivative of $f$ at the point $x$, by $V_f$ the zero set of $f$, and by $\Sigma_f= \{ x \in \bb K^n: Df(x) =0\}$ the \textit{singular set} of $f$. We write the Taylor expansion of $f$ at $0$ as $T_0 f = f_k + f_{k+1} + \ldots$,  where $f_i$ is the homogeneous polynomial of degree $i$, $f_k \neq 0$. We denote by $H_f = f_k$ the \textit{lowest degree homogeneous polynomial} and by $m_f = k$ \textit{the multiplicity of $f$ at $0$}. 

Given a semialgebraic set $X \in \bb K^n$, for $x \in \overline{X}$ where $\overline{X}$ denotes the closure of $X$, the tangent cone of $X$ at the point $x$ is defined as follows
$$ C( X, x)= \{ \lambda u: \lambda \in \bb R_{\geq 0}, u = \lim_{m\to \infty} \frac{x_m -x}{\|x_m-x\|}, X \supset \{ x_m\} \to x\}.$$

We call $C_\ak g(f,0)= C(V_f, 0)$ the geometric tangent cone of $f$. The algebraic tangent cone $C_\ak a(f,0)$ of $f$ at $0$ is defined by $$C_\ak a(f,0) = \{x \in \bb K^n: H_f(x)=0\}.$$
By the \textit{singular set of the algebraic tangent cone of $f$} we mean the set $\Sigma_{H_f}$. 

For $\bb K = \bb C$, it is known that $C_\ak a(f,0) =C_\ak g(f, 0)$ (see Theorem 10.6 in Whitney \cite{Whitney} or, Proposition 2.6 in \cite{hll}). This is not true in the real case. For instance, if $f(x,y) = x^2 +y^{2k}$ where $k$ is an integer greater than or equal to $2$, then $C_\ak a(f,0) = \{x = 0\}$ and $ C_\ak g(f, 0) =\{(0,0)\}$.

Two smooth function germs $f,g : (\bb K^n, 0) \to (\bb K, 0)$ are said to be \textit{bi-Lipschitz right equivalent} if there exists a germ of bi-Lipschitz homeomorphism  $\varphi : (\bb K^n,0) \to (\bb K^n,0)$ such that $f\circ \varphi = g$. More precisely, there exist a neighbourhood $U$ of $0 \in \bb K^n$, a homeomorphism $\varphi : U \to \varphi(U)$ with $\varphi(0) = 0$ and
\begin{equation}\label{equ_lipschitz_const}
\frac{1}{L}\|x - y\| \leq \|\varphi(x)-\varphi(y)\| \leq L\|x-y\|\
\end{equation} for any $x,y \in U$ and for some $L$ positive such that $f \circ \varphi = g$. The least $L$ satisfying (\ref{equ_lipschitz_const}) is said to be the \textit{bi-Lipschitz constant} of $\varphi$. In the rest of the article by \textit{``equivalence"} we mean the right equivalence.

It follows from the main result of Sampaio \cite{Sampaio} that if $f$ and $g$ are bi-Lipschitz equivalent then $C_\ak g(f, 0)$ and $C_\ak g(g, 0)$ are also bi-Lipschitz homeomorphic as sets.

We denote by $\al E_n$ the set of all smooth function germs at $0 \in \bb K^n$ and by  $\ak m_n$ the set of germs in $\al E_n$ vanishing at $0$. Notice that $\al E_n$ is a local ring and $\ak m_n$ is the maximal ideal of $\al E_n$. Denote by $\al R_n$ the group of all smooth diffeomorphism germs $H : (\bb K^n,0) \to (\bb K^n,0)$. Then $\al R_n$ acts on $\al E_n$ by composition $H.f = f \circ H$. Two germs $f,g \in \al E_n$ are called \textit{smoothly equivalent}, denoted by $f \sim_{\al R} g$, if they lie in the same orbit of this action.

Given a germ $f \in \al E_n$, denote by $Jf$ the Jacobean ideal of $f$, i.e. the ideal in $\al E_n$ generated by the partial derivatives of $f$. The \textit{codimension} (also called the \textit{Milnor number} of $f$) of a germ $f \in \al E_n$ is defined to be $\dim_{\bb K} \al E_n/Jf$. It is well-known that in the complex case the Milnor number is a topological invariant (Milnor \cite{Milnor1}), and is therefore a bi-Lipschitz invariant, i.e. if $f,g \in \al E_n$ are bi-Lipschitz equivalent then their codimensions are equal.  We would like to remark that the Milnor number is not a bi-Lipschitz invariant in the real case. Consider the family $f_t (x, y ) = x^4 + y^4  + tx^2y^2 +y^6$, $t \geq 0$. By Remark \ref{rem_fr_thm}, this family is bi-Lipschitz trivial, however, $\mu(f_t) =9, t\neq 2$ and $\mu(f_2) = 13$. 

Denote by $J^k(n,1)$  the  $k$-jet space of $\ak m_n$, by $\al R^k_n$ the set of $k$-jet space of $\al R_n$. A germ $f \in \ak m_n$ is called \textit{$k$-determined} if for any $g \in  \ak m_n$ such that $j^k g(0) = j^kf (0)$ then $f \sim_{\al R} g$; $f$ is called \textit{finitely determined} if $f$ is $k$-determined for some $k \in \bb N$. Recall that if a germ $f : (\bb K^n,0) \to (\bb K,0)$ is finitely determined then it has an isolated singularity at $0$. In addition, if a germ $f$ is finitely determined, then for a sufficiently large $k$ there is a neighbourhood of $j^k f(0)$ in $J^k(n,1)$ such that every germ in this neighbourhood is $k$-determined. We call the number $k$ \textit{sufficiently large} for $f$.

The action of $\al R^k_n$ on $J^k(n,1)$ is defined by taking the composition and then truncating  the Taylor expansion. A germ $f \in \ak m_n$ is said to be \textit{smooth simple} if for $k\in \bb N$ sufficiently large for $f$, there exist a neighbourhood of $j^kf(0)$ in $J^k(n,1)$ that meets only finitely many $\al R^k
_n$-orbits in $J^k(n,1)$. The germs that are not smooth simple are called \emph{smooth modal}.  

Two $k$-jets $z,w \in J_0^k(n,1)$ are \textit{bi-Lipschitz equivalent} if there exists a bi-Lipschitz homeomorphism germ $\phi : (\bb K^n,0) \to (\bb K^n,0)$ such that $z\circ \phi= w$. By Lipschitz orbits we mean the equivalence classes of the bi-Lipschitz equivalence.  A germ $f \in \ak m_n$ is \emph{Lipschitz simple} if for $k\in \bb N$  sufficiently large for $f$, there is a neighbourhood of $j^kf(0)$ in $J^k(n,1)$ that meets only finitely many Lipschitz orbits. The germs that are not Lipschitz simple are called \emph{Lipschitz modal}. We remark that if a germ is smoothly simple then it is also Lipschitz simple. The converse is not true in general. 

Given $f \in \al E_n$, the \textit{corank} of $f$ at $0$ is defined to be the nullity (dimension of the kernel) of the Hessian $\left ( \frac{\partial^2 f}{\partial x_i \partial x_j}(0)\right )$.  The following result is known as the Splitting lemma; see  Ebeling \cite{Ebeling} or Gibson \cite{Gibson} for example.

\begin{lem}\label{splitting}
	Let $f \in \ak m_n^2$ be a finitely determined germ of corank $c$. Then there exists $g \in \ak m_c^3$ such that 
	$$f(x_1,\ldots,x_n) \sim_{\al R} g(x_1,\ldots,x_c) \pm x_{c+1}^2 \pm \ldots \pm x_n^2.$$
The codimension of $f$ and $g$ are equal. Moreover, $g$ is uniquely determined up to smooth equivalence, i.e. if $g + Q \sim_{\al R} h + Q$ then $g \sim_{\al R} h$, where $Q$ is the non-degenerate quadratic form.	
\end{lem}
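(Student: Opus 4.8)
The plan is to carry out the classical square-completion argument, i.e. a parametrized version of the Morse lemma. Write $x=(x',x'')$ with $x'=(x_1,\dots,x_c)$ and $x''=(x_{c+1},\dots,x_n)$. Since $f\in\ak m_n^2$ has corank $c$, its Hessian at $0$ has rank $n-c$; after a linear change of coordinates from $\al R_n$ we may assume $T_0 f=\sum_{j=c+1}^{n}\varepsilon_j x_j^2+(\text{terms of order}\ge 3)$ with $\varepsilon_j=\pm 1$, all equal to $1$ when $\bb K=\bb C$. In particular $f$ has no quadratic monomial involving $x_1,\dots,x_c$. Now consider the map $x\mapsto\bigl(\tfrac{\partial f}{\partial x_{c+1}}(x),\dots,\tfrac{\partial f}{\partial x_n}(x)\bigr)$: its differential in the $x''$-directions at $0$ is the non-degenerate block $\operatorname{diag}(2\varepsilon_{c+1},\dots,2\varepsilon_n)$ of the Hessian, and its differential in the $x'$-directions at $0$ vanishes by the previous remark. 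The implicit function theorem then supplies a unique germ $\psi\colon(\bb K^c,0)\to(\bb K^{n-c},0)$ with $\tfrac{\partial f}{\partial x_j}(x',\psi(x'))\equiv 0$ for $j>c$, and $D\psi(0)=0$, so that $\psi\in\ak m_c^2$.

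The coordinate change $(x',x'')\mapsto(x',x''+\psi(x'))$, which lies in $\al R_n$, transforms $f$ into a germ $\tilde f$ with $\tfrac{\partial\tilde f}{\partial x_j}(x',0)\equiv 0$ for all $j>c$; applying Hadamard's lemma twice in the $x''$-variables yields $\tilde f(x',x'')=g(x')+\sum_{i,j>c}x_i x_j\,B_{ij}(x',x'')$, where $g(x'):=\tilde f(x',0)$ and $B$ is a smooth symmetric matrix germ with $B(0,0)$ non-degenerate (it is half the $x''$-Hessian of $f$ at $0$, since $D\psi(0)=0$). The parametrized Morse lemma — diagonalize $B$ smoothly by inductive square-completion, treating $x'$ as a parameter — then produces a diffeomorphism germ of the form $(x',x'')\mapsto(x',\Phi(x',x''))$ in $\al R_n$ with $\Phi(x',0)=0$, after which $\tilde f$ becomes $g(x')\pm x_{c+1}^2\pm\cdots\pm x_n^2$. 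Composing the three changes gives the asserted equivalence, with $g(x')=f(x',\psi(x'))$ in the original coordinates. Substituting $x''=\psi(x')\in\ak m_c^2$ into $f$ and using that the only quadratic part of $f$ lies in the $x''$-variables, a monomial-by-monomial check shows $g\in\ak m_c^3$. For the codimension, right equivalence transports the Jacobian ideal of one germ to that of the other through the induced ring automorphism, so $\dim_{\bb K}\al E_n/Jf=\dim_{\bb K}\al E_n/J(g+Q)$ with $Q=\pm x_{c+1}^2\pm\cdots\pm x_n^2$; since $J(g+Q)=Jg\cdot\al E_n+(x_{c+1},\dots,x_n)$, we obtain $\al E_n/J(g+Q)\cong\al E_c/Jg$, hence $\mu(g)=\mu(f)<\infty$. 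In particular $g$ has an isolated singularity and is finitely determined.

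Finally, for the uniqueness statement suppose $g+Q\sim_{\al R}h+Q$ with $g,h\in\ak m_c^3$. Both germs have corank $c$ (their Hessian is that of $Q$) and are already in split form: the critical section given by the implicit function theorem above is $x''\equiv 0$ and the $x''$-Hessian is constant, so running the splitting procedure on either one returns the germ itself, with reduced part $g$ respectively $h$. It remains to show the splitting procedure sends $\al R_n$-equivalent inputs to $\al R_c$-equivalent reduced germs, and this is the step I expect to be the main obstacle: the implicit-function solution $\psi$ and the Morse diagonalizer $\Phi$ are not canonical, so one must first prove an auxiliary uniqueness lemma — for a fixed germ, any two admissible choices of $(\psi,\Phi)$ give $\al R_c$-equivalent reduced germs — and then combine it with the observation that pre-composing a splitting diffeomorphism of $F\circ\Theta$ by $\Theta\in\al R_n$ yields a splitting diffeomorphism of $F$. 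Applying this with $F=h+Q$ and $\Theta$ the conjugating diffeomorphism gives $g\sim_{\al R}h$. Granting the parametrized Morse lemma, the remaining verifications are routine.
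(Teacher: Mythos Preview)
The paper does not give its own proof of this lemma: it is stated as the classical Splitting Lemma with references to Ebeling and Gibson, so there is no argument in the paper to compare your proposal against. Your existence argument is the standard one found in those references (implicit function theorem to locate the critical section $x''=\psi(x')$, Hadamard's lemma, then parametrized Morse/square-completion to diagonalize the non-degenerate $x''$-block), and the codimension computation via $\al E_n/J(g+Q)\cong\al E_c/Jg$ is correct.

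The one place where your write-up is genuinely incomplete is the uniqueness clause, which you yourself flag. The outline you give --- show that different admissible choices of $(\psi,\Phi)$ yield $\al R_c$-equivalent reduced germs, then transport along the conjugating $\Theta\in\al R_n$ --- is the right shape, but the ``auxiliary uniqueness lemma'' is exactly the content here and cannot be waved through as routine. A cleaner route (and the one in Gibson) is intrinsic: the critical manifold $\Sigma=\{\partial F/\partial x_j=0,\ j>c\}$ is a smooth $c$-dimensional germ invariant under any $\Theta\in\al R_n$ with $F\circ\Theta=F$, and the reduced germ is, up to $\al R_c$-equivalence, the restriction $F|_\Sigma$ read in any local chart on $\Sigma$. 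Applied to $F=g+Q$ and $F\circ\Theta=h+Q$ this gives $g\sim_{\al R}h$ without ever choosing a Morse diagonalizer. If you want to keep your formulation, you should at least carry out that auxiliary lemma rather than assert it.
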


\section{Arnold's results on classification}\label{sec3}
In this section, we recall some results of Arnold about the classification of complex analytic germs that will be used in the later sections. 

It is known that the germs in $\ak m_n$ of codimension $0$ (or non-singular germs) are equivalent to the germ $(x_1 \ldots, x_n)\mapsto x_1$, hence they are smooth simple. Arnold classified germs in $\ak m_n^2$ with isolated singularity 
into classes of constant Milnor numbers, represented by a germ or family of germs called the `normal form' of the class of singularities. More precisely, a class $\al D$ has a \textit{normal form} $\ak F_t(x_1, \ldots, x_k)$, if $\al D$ contains all germs smoothly equivalent to $\ak F_t + x_{k+1}^2 + \dots + x_n^2$ for some $t$. 

Recall that a smooth germ $F: (\bb C^n \times \bb C^k, 0) \to  (\bb C, 0)$, $F(x, t)=f_t(x)$ is said to be a $k$-parameter deformation of a germ $f: (\bb C^n, 0) \to (\bb C, 0)$ if $f_0(x)=  f(x)$.  A class of singularities $\al C$ is said to be \textit{adjacent} to a class $\al D$,  denoted $\al C \rightarrow \al D$, if every $f \in  \al C$ can \textit{deform} to $\al D$. That is, for every $f \in \al C$, there exists a smooth deformation $f_t$ of $f$ such that $f_0=f$ and $f_t$ belongs to $ \al D$ for every $t$ near $0$. 

The following results are due to Arnold \cite{Arnold1}.

\begin{thm}\label{thm_arnold1}
A germ $f \in \ak m^2_n$ is smooth simple if and only if it is equivalent to one of the germs in the following table:
\begin{center}
\footnotesize
\begin{tabular}{c|c | c}
\hline 
Name & Normal form & Codimension \\ \hline
$A_k$ & $x^{k+1}$ & $k \geq 1$\\
$D_k$ & $x^2y + y^{k-1}$ & $k\geq 4$\\
$E_6$ & $x^3 + y^4$ & 6\\
$E_7$ & $x^3 + xy^3$  & 7\\
$E_8$ & $x^3 + y^5 $ & 8\\  \hline
\end{tabular}
\end{center}
\end{thm}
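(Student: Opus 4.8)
The plan is to follow Arnold's method: use the Splitting lemma (Lemma~\ref{splitting}) to reduce to the residual part $g\in\ak m_c^3$ of corank $c$, stratify by the leading homogeneous form of $g$, and then either reduce $g$ to a normal form by explicit coordinate changes or detect a continuous modulus. By Lemma~\ref{splitting} we may assume $f=g(x_1,\dots,x_c)\pm x_{c+1}^2\pm\dots\pm x_n^2$ with $g\in\ak m_c^3$ and $c=\operatorname{corank}f$; since adjoining a nondegenerate quadratic form changes neither the orbit of $g$ nor the orbit structure on a jet neighbourhood (this is the uniqueness clause of Lemma~\ref{splitting}), it suffices to analyse $g$. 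If $c=0$ then Lemma~\ref{splitting} already gives $f\sim_{\al R}\pm x_1^2\pm\dots\pm x_n^2$, which over $\bb C$ is $A_1$. If $c=1$, then $g(x_1)=a\,x_1^{k+1}+\cdots$ with $a\neq0$ for some finite $k$ (otherwise $g$ has a non-isolated singularity), and a one-variable coordinate change gives $g\sim_{\al R}x_1^{k+1}=A_k$. If $c\ge3$, the $3$-jet of $g$ is a cubic form in $c$ variables; since $\dim\{\text{cubics in }c\text{ variables}\}=\binom{c+2}{3}>c^2=\dim\mathrm{GL}_c$ for $c\ge3$, and the induced action of $\al R^k$ on the leading cubic form is just the $\mathrm{GL}_c$-action, a jet neighbourhood of $g$ meets infinitely many orbits already among jets with vanishing quadratic part, so $g$ is modal (this is where $T_{3,3,3}$, of corank $3$ and codimension $8$, sits).

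The heart of the argument is corank $2$. Write $g\in\ak m_2^3$ and consider its cubic part $g_3$, a binary cubic over $\bb C$, which up to $\mathrm{GL}_2$ is one of: a product of three distinct linear factors, the form $x^2y$, the form $x^3$, or $0$. If $g_3$ has three distinct factors, then $g$ is $3$-determined and $g\sim_{\al R}D_4$. If $g_3\sim x^2y$, then completing the square in $x$ relative to $y$, applying Weierstrass preparation and invoking finite determinacy reduces $g$ to $x^2y+c(y)$; the singularity being isolated forces $\operatorname{ord}c=k-1\ge4$ finite, and rescaling gives $g\sim_{\al R}x^2y+y^{k-1}=D_k$ with $k\ge5$. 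If $g_3\sim x^3$, a Tschirnhaus transformation eliminates the $x^2$-part, giving $g\sim_{\al R}x^3+x\,b(y)+c(y)$; a Newton-polygon analysis in the pair $(\operatorname{ord}b,\operatorname{ord}c)$ then yields $x^3+y^4=E_6$ when $\operatorname{ord}c=4$; otherwise $x^3+xy^3=E_7$ when $\operatorname{ord}b=3$; otherwise $x^3+y^5=E_8$ when $\operatorname{ord}c=5$; and in every remaining case ($\operatorname{ord}b\ge4$ and $\operatorname{ord}c\ge6$) one has $\mu(g)\ge10$ and $g$ is either equivalent to a member of the family $J_{10}\colon x^3+t\,xy^4+y^6$ or deforms to it. Finally, if $g_3=0$, then $j^4g\sim x^4+\lambda x^2y^2+y^4+\cdots$, so $g$ lies in (or deforms to) the family $X_9$.

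This reduces the theorem to two assertions. For the \emph{only if} direction one must show the parabolic families $T_{3,3,3}$, $X_9$, $J_{10}$ are genuinely modal: for $T_{3,3,3}$ and $X_9$ the continuous moduli are the $\mathrm{GL}$-invariants of ternary cubics (the $j$-invariant of the elliptic curve) and of binary quartics (the cross-ratio of the four roots); for $J_{10}$ one checks that $x^3+t\,xy^4+y^6$ is weighted-homogeneous of degree $6$ for the weights $(2,1)$, has an isolated singularity precisely when $4t^3+27\neq0$, and that $t$ (up to a finite ambiguity) is an $\al R$-invariant, so a $k$-jet neighbourhood again meets infinitely many orbits — hence any germ adjacent to one of these families is modal. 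For the \emph{if} direction one computes the miniversal deformation of each of $A_k,D_k,E_6,E_7,E_8$ and verifies that its base splits into finitely many $\al R$-orbits — equivalently, that the adjacency hierarchy emanating from each of these germs consists only of $ADE$-germs of strictly smaller Milnor number and is therefore finite — so that a neighbourhood of the $k$-jet meets finitely many orbits and the germ is simple.

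The main obstacle is the book-keeping in the corank-$2$, $g_3\sim x^3$ branch: verifying that the Tschirnhaus, Weierstrass and Newton-polygon reductions actually annihilate all the higher-order terms they are supposed to — this is where Arnold's filtered (``spectral sequence'') argument, or an equally careful use of finite determinacy, is indispensable — and pinning down the precise locus on which one is forced past $E_8$ into the $J_{10}$-family. Establishing that $t$ is a nontrivial modulus for $J_{10}$, hence that this family and every germ deforming to it is modal, is the other delicate point; in the present paper this is carried out in Section~\ref{section7}.
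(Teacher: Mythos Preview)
The paper does not prove this theorem at all: it is quoted in Section~\ref{sec3} as a classical result of Arnold, with the attribution ``The following results are due to Arnold \cite{Arnold1}'', and no argument is given. Your proposal is therefore not competing with a proof in the paper but is, in effect, a sketch of Arnold's original argument (splitting lemma, case analysis on the corank and on the leading binary cubic, then the Tschirnhaus/Newton-polygon reduction in the $x^3$-branch). As such the outline is broadly correct and is the standard route.

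Two small corrections to your sketch. First, the bare dimension count $\binom{c+2}{3}>c^2$ for $c\ge3$ does not by itself force infinitely many $\mathrm{GL}_c$-orbits, since the action need not be generically free and one must subtract the generic stabiliser dimension; the honest argument is the one you give later, namely the $j$-invariant of the ternary cubic (and analogously the cross-ratio for $X_9$). Second, your closing reference to Section~\ref{section7} is misplaced: that section proves \emph{Lipschitz} modality of $J_{10}$, a strictly harder statement than what is needed here. For smooth modality of $J_{10}$ the observation you already made suffices --- the family $x^3+t\,xy^4+y^6$ is weighted-homogeneous of type $(2,1;6)$, and the parameter $t$ (modulo a finite group) is an $\al R$-invariant, so a jet neighbourhood meets a continuum of orbits.
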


\begin{thm}\label{thm_arnold2}
The adjacencies of all smooth modal germs can be described by the diagrams below. The modal germs that do not appear in the diagram deform to one of the classes inside the brackets. 

\textbf{Corank $2$:}
$$J_{10} = (T_{2,3,6}) \longleftarrow \cdots$$

\[
\xymatrixrowsep{0.2in}
\xymatrixcolsep{0.2in}
\xymatrix{
& & X_9 = T_{2,4,4}\\
\cdots \ar[r]& (T_{2,4,6})\ar[r] & T_{2,4,5}\ar[u] & Z_{11} \ar[l] & (Z_{12})\ar[l] & \cdots \ar[l]\\
\cdots \ar[r]& (T_{2,5,6}) \ar[u]\ar[r] & T_{2,5,5}\ar[u] & W_{12}\ar[u] \ar[l] & (W_{13}) \ar[u]\ar[l] & \cdots \ar[l]
}
\]

\textbf{ Corank $3$:}

\[
\xymatrixrowsep{0.2in}
\xymatrixcolsep{0.1in}
\xymatrix{
 & & & &  P_8 = T_{3,3,3} & & & \\
& \cdots \ar[r]& (T_{3,3,6}) \ar[r] &T_{3,3,5} \ar[r]& T_{3,3,4} \ar[u] & Q_{10}\ar[l] & Q_{11} \ar[l] &(Q_{12}) \ar[l] &  \cdots \ar[l] &\\
& \cdots \ar[r] & (T_{3,4,6}) \ar[dr] & \\
\cdots \ar[r] & (T_{3,5,6}) \ar[r] & T_{3,5,5} \ar[r] & T_{3,4,5} \ar[uu] \ar[r] & T_{3,4,4} \ar[uu] & S_{11} \ar[l]  \ar[uu] & S_{12} \ar[l]  \ar[uu] & (S_{1,0}) \ar[l] & \cdots \ar[l]\\
\cdots \ar[r] & (T_{4,5,6})\ar[u]\ar[r] & T_{4,5,5} \ar[u] \ar[r] & T_{4,4,5} \ar[u] \ar[r] & T_{4,4,4} \ar[u] & (U_{12}) \ar[l] \ar[u] & \ar[l] \cdots &  \\
\cdots \ar[r] & (T_{5,5,6}) \ar[u] \ar[r] & T_{5,5,5}  \ar[u]  & (T_{4,4,6}) \ar[u] & (\bff{O}) \ar[u] &  \\
& & & \vdots \ar[u] & \vdots \ar[u] &  
}
\]
where the normal forms of the above singularities are

\begin{center}
\begin{longtable}{l l l}
$J_{10}$& $x^3 +ax^2y^2 +y^6$ &   $4a^3+27\neq 0$\\
$X_9$ & $x^4 +y^4 +ax^2y^2$ &   $a^2 \neq 4$\\
$P_8$ & $x^3 + y^3 + z^3 + a xyz$, & $ a^3+27\neq  0$\\

$T_{p, q, r}$ & $x^p + y^q + z^r + a xyz$,& $ a\neq 0, \frac{1}{p} + \frac{1}{q} + \frac{1}{r} <1$ \\
$Z_{11}$& $x^3y +y^5 +axy^4$ & \\
$Z_{12}$& $x^3y +xy^4 +ax^2y^3$ &\\
$W_{12}$& $x^4 +y^5 + ax^2y^3$&\\
$W_{13}$& $x^4 +xy^4 + ay^6$ & \\
$Q_{10}$& $x^3 + y^4 + yz^2 + axy^3$ &\\	 
$Q_{11}$& $x^3 + y^2z + xz^3 + az^5$& \\
$Q_{12}$& $x^3 + y^5 +yz^2 + a xy^4$&\\		
$S_{11}$& $x^4 + y^2z + xz^2 + ax^3z$& \\
$S_{12}$& $x^2y + y^2z + xz^3 + az^5$ &\\
$U_{12}$& $x^3 + y^3 + z^4 + a xyz^2$ &\\
$S_{1,0}$&$x^2z+ yz^2 +y^5 + a zy^3 + b zy^4$&\\
$\bff{O}$& $\text{ germs of corank } \geq 4$ & 
\end{longtable}
\end{center}
\end{thm}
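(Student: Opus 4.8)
This statement assembles classification results of Arnold \cite{Arnold1}, and I will describe how one would prove it. The assertion splits into two halves: first, that every arrow drawn is a genuine adjacency (and, in the series, that each $\cdots$ encodes infinitely many of them); and second, that these arrows and their transitive closure account for \emph{all} adjacencies among modal germs, with every modal germ omitted from the diagrams deforming to the corresponding bracketed series.

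To establish the adjacencies, the plan is to write the deformations down by hand. Given a germ $f$ in a class $\al C$ and a target class $\al D$ one step below it, I would take $f$ in its normal form, add one or two monomials carrying a small parameter $t$, and check that for $t \ne 0$ near $0$ the germ $f_t$ is right equivalent to the normal form of $\al D$. The recognition step is routine: one reads off the Newton diagram (or a system of weights) of $f_t$, confirms finite determinacy, and transforms the relevant jet to the listed normal form by a polynomial change of coordinates, using the splitting lemma (Lemma \ref{splitting}) to track the corank. For the infinite families one argues uniformly --- a single parametrized perturbation realizes $\al C \to \al D$ simultaneously for all members of $T_{p,q,r}$, $Z_k$, $W_k$, $Q_k$, $S_k$, $U_k$, $J_k$ --- so each $\cdots$ costs only one computation. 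Transitivity of adjacency then closes up the diagrams; in particular it yields that essentially every modal germ is adjacent to one of $P_8$, $X_9$, $J_{10}$, which is the form in which the statement is used later (Lemma \ref{lem_deform_j10}).

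For the completeness half, the tool is semicontinuity of invariants under deformations of isolated singularities: $\al C \to \al D$ forces the Milnor number, the modality and the corank of $\al D$ to be no larger than those of $\al C$, and the finer obstructions come from semicontinuity of the spectrum and from Brieskorn's results \cite{Brieskorn} on the Milnor lattice and the monodromy operator --- these are what forbid, say, an adjacency between two of the parabolic germs $P_8, X_9, J_{10}$, or between germs of different corank. Because there are only finitely many classes with a given Milnor number, to rule out extra adjacencies of a fixed $f$ one runs through the finitely many candidate targets $\al D$ with $\mu(\al D) \le \mu(f)$ and eliminates each with these invariants; to see that the bracketed families exhaust the modal germs one appeals to Arnold's classification up to each codimension (the spectral-sequence reduction of jets), which shows that any modal germ whose sufficiently high jet is not one of the finitely many listed types already sits in one of the families.

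The main obstacle is this completeness half. Constructing the deformations in the first half is lengthy but essentially mechanical; the genuine content is in showing that no further adjacencies appear and that the list of families is exhaustive, and that rests on Arnold's full normal-form machinery together with the semicontinuity of the Hodge-theoretic invariants of the Milnor fibre. For the purposes of this paper only a small and comparatively soft fragment is needed --- the adjacencies \emph{to} $J_{10}$ --- so one could sidestep the completeness discussion entirely and instead verify, for each normal form appearing in the table, a single explicit deformation onto $J_{10}$.
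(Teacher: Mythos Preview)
The paper does not prove this theorem at all: it is stated in Section~\ref{sec3} under the heading ``Arnold's results on classification'' and attributed directly to Arnold \cite{Arnold1}, with no argument given. So there is no proof in the paper to compare against; the theorem functions purely as background input.

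Your proposal is a reasonable high-level sketch of how Arnold's classification and adjacency results are obtained, and you correctly recognize from the outset that this is a quotation of Arnold's work rather than something the present paper establishes. Your final paragraph is also on target: the paper never needs the full strength of the theorem, and what it actually uses and proves independently is the much more limited assertion that the bracketed families deform to $J_{10}$ (Theorem~\ref{thm_deformto}), together with Brieskorn's non-adjacencies \cite{Brieskorn} for the unbracketed ones. In that sense your closing remark anticipates exactly how the paper proceeds.

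One small correction: you write that Theorem~\ref{thm_arnold2} ``is the form in which the statement is used later (Lemma~\ref{lem_deform_j10})'', but Lemma~\ref{lem_deform_j10} is a self-contained filtration criterion for deforming to $J_{10}$ and does not invoke Theorem~\ref{thm_arnold2}; rather, Theorem~\ref{thm_arnold2} supplies the list of normal forms to which Lemma~\ref{lem_deform_j10} is then applied in Theorem~\ref{thm_deformto}.
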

Notice that by changing $x\mapsto x + \lambda y^2$, the normal form of $J_{10}$ becomes $x^3 + a xy^4 +y^6$. This is exactly the family considered by Henry--Parusi\'nski in \cite{hp1}. For convenience, we will take $x^3 + a xy^4 +y^6$ as the normal form of $J_{10}$.

\section{Bi-Lipschitz invariance of rank and corank}\label{section4}

The aim of this section is to prove that the rank and corank of smooth germs are bi-Lipschitz invariants. We use a construction of a map associated to a bi-Lipschitz map $\varphi$ due to Sampaio \cite{Sampaio}, which in some sense works as the derivative of $\varphi$. Let us give the details of the construction.

Let $\varphi : (\bb K^n,0) \to (\bb K^n,0)$ be a bi-Lipschitz map germ with $\psi$ as its inverse. By definition, there exist $r,L \in \bb R^+$ with $L \geq 1$ such that for all $x,y \in \bff B(0,r)$, where $\bff B(0,r)$ denotes the closed ball of radius $r$  centered at $0$, we have:
$$\frac{1}{L}\|x - y\| \leq \|\varphi(x)-\varphi(y)\| \leq L\|x-y\|.$$
For $m \in \bb N$, put $$\varphi_m(x) = m\varphi(\frac{x}{m})\,\,\, \text{ and } \,\,\,\psi_m(x) = m\psi(\frac{x}{m}).$$ It is easily checked that $\varphi_m$ and $\psi_m$ are a bi-Lipschitz germs of the same Lipschitz constant for any $m$. By the Arzela--Ascoli theorem there exists a subsequence $\{m_i\} \subset \bb N$  such that $\{\varphi_{m_i}\}$ and $\{\psi_{m_i}\}$ converges uniformly to a bi-Lipschitz germs $d\varphi$ and $d\psi$ respectively. It was proved in \cite{Sampaio} that $d\psi$ is the inverse of $d\varphi$. Notice that  the domains of $\varphi_{m_i}$ and $\psi_{m_i}$ grow bigger as $m_j$ increases and the limits $d\varphi$ and $d\psi$ are well-defined on the whole of $\bb K^n$. 

We first prove the bi-Lipschitz invariance of the rank.

\begin{thm}\label{rank}
Let $f,g : (\bb K^n, 0) \to (\bb K^p,0)$ be two smooth map germs. If $f$ and $g$ are bi-Lipschitz equivalent then the rank of $f$ is equal to the rank of $g$ at $0$. 
\end{thm}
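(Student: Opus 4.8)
The plan is to use the ``derivative'' $d\varphi$ of the bi-Lipschitz homeomorphism, built from the rescalings $\varphi_m(x)=m\varphi(x/m)$ as described above, to show that it intertwines the two Jacobians: $Df(0)\circ d\varphi=Dg(0)$ as maps $\bb K^n\to\bb K^p$. Once this identity is available, the equality of ranks is immediate from the fact (due to Sampaio) that $d\varphi$ is a bijection of $\bb K^n$; the point is that \emph{no linearity} of $d\varphi$ is needed, only surjectivity.

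I would first treat the case in which the equivalence is given by $g=f\circ\varphi$ with $\varphi:(\bb K^n,0)\to(\bb K^n,0)$ bi-Lipschitz; a bi-Lipschitz homeomorphism of the target, if present, is absorbed in the same way at the end. Fix $L\ge 1$ and $r>0$ with $\tfrac1L\|x-y\|\le\|\varphi(x)-\varphi(y)\|\le L\|x-y\|$ on $\bff B(0,r)$, so that $\|\varphi(x)\|\sim\|x\|$ near $0$. Since $f$ is smooth and $f(0)=0$, Taylor's theorem gives $f(z)=Df(0)z+S(z)$ with $\|S(z)\|/\|z\|\to 0$, hence
\[
g(x)=f(\varphi(x))=Df(0)\varphi(x)+S(\varphi(x)).
\]
Replacing $x$ by $x/m$ and multiplying by $m$ yields $g_m(x)=Df(0)\varphi_m(x)+m\,S(\varphi(x/m))$, and the remainder is controlled by $\|m\,S(\varphi(x/m))\|=m\|\varphi(x/m)\|\cdot\|S(\varphi(x/m))\|/\|\varphi(x/m)\|\le L\|x\|\cdot\|S(\varphi(x/m))\|/\|\varphi(x/m)\|\to 0$ for each fixed $x$. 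Passing to a subsequence $\{m_i\}$ along which $\varphi_{m_i}\to d\varphi$ uniformly on compact sets, and using continuity of the linear map $Df(0)$, the right-hand side converges to $Df(0)\bigl(d\varphi(x)\bigr)$; on the other hand $g_{m_i}(x)=m_i\,g(x/m_i)\to Dg(0)x$ because $g$ is differentiable at $0$. Therefore
\[
Df(0)\bigl(d\varphi(x)\bigr)=Dg(0)x\qquad\text{for every }x\in\bb K^n .
\]

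To finish, recall from Sampaio's construction that $d\varphi:\bb K^n\to\bb K^n$ is a bijection with inverse $d\psi$. Hence the image of the linear map $Dg(0)$ equals $Df(0)\bigl(d\varphi(\bb K^n)\bigr)=Df(0)(\bb K^n)$, which is the image of $Df(0)$, and comparing dimensions gives $\operatorname{rank}Df(0)=\operatorname{rank}Dg(0)$. If the equivalence additionally involves a bi-Lipschitz homeomorphism $\tau$ of $\bb K^p$, the same rescaling argument gives $Dg(0)=d\tau\circ Df(0)\circ d\varphi$; here $d\tau$ need not be linear, but it restricts to a bi-Lipschitz homeomorphism from the linear subspace $\operatorname{image}Df(0)$ onto the linear subspace $\operatorname{image}Dg(0)$, and a bi-Lipschitz embedding between Euclidean sets preserves topological (equivalently, Hausdorff) dimension, so the ranks again agree.

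The one routine point requiring care is the interchange of the limit $m\to\infty$ with the Taylor remainder, which is precisely the estimate displayed above. The genuine conceptual obstacle is that $d\varphi$ is in general only bi-Lipschitz, not linear, so one cannot simply differentiate $g=f\circ\varphi$ directly; the identity $Df(0)\circ d\varphi=Dg(0)$ has to be extracted from the blow-up limit, after which it is the \emph{surjectivity} of $d\varphi$ that forces the images, and therefore the ranks, of $Df(0)$ and $Dg(0)$ to coincide.
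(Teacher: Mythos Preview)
Your proof is correct and follows essentially the same route as the paper: both arguments multiply $g(x/m)=f(\varphi(x/m))$ by $m$, expand $f$ and $g$ to first order, and pass to the limit along a subsequence $\{m_i\}$ to obtain the identity $Df(0)\bigl(d\varphi(x)\bigr)=Dg(0)(x)$ for all $x$, from which equality of ranks follows since $d\varphi$ is a bijection. Your write-up is slightly more explicit than the paper's in justifying the remainder estimate and in spelling out why surjectivity of $d\varphi$ forces the images of $Df(0)$ and $Dg(0)$ to coincide; the additional paragraph handling a possible target homeomorphism $\tau$ is unnecessary here (the paper's convention is that ``equivalence'' means right equivalence), but it does no harm.
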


\begin{proof}
Since $f$ and $g$ are bi-Lipschitz equivalent, there exists a germ of a bi-Lipschitz homeomorphism $\varphi : (\bb K^n, 0) \to (\bb K^n,0)$ such that $f \circ  \varphi = g$. Then, there exists a sequence $\{m_i\}$ in $\bb N$ such that the sequence of maps $\varphi_{m_i} : (\bb K^n,0) \to (\bb K^n,0)$ defined by $ \varphi_{m_i} = m_i \varphi(\frac{x}{m_i})$ converges uniformly to a bi-Lipschitz germ $d\varphi$ on a neighbourhood of $0$  as $i$ tends to $\infty$.  Write $f(x) = Df(0)(x) + o(\|x\|)$ and $g(y) = Dg(0)(y) + o(\|y\|)$ on a neighbourhood of $0$. Since $f \circ \varphi(y) = g(y)$
\begin{alignat*}{3}
& &&\qquad \qquad \qquad \qquad \qquad\quad\quad\,\,\, m_i f \circ \varphi(\frac{y}{m_i}) &&= m_i g(\frac{y}{m_i})\\  
&\implies &&\qquad \qquad\,\,\,m_i(Df(0)( \varphi(\frac{y}{m_i})) + m_i o\|\varphi(\frac{y}{m_i})\|   &&= m_i Dg(0)(\frac{y}{m_i}) + m_i o\|\frac{y}{m_i}\| \\
&\implies &&\,\,\, \lim_{i \to \infty}  Df(0)(m_i\varphi(\frac{y}{m_i})) + \lim_{i \to \infty} m_i o\|\varphi(\frac{y}{m_i})\|  &&= Dg(0)(y) + \lim_{i \to \infty} m_i o\|\frac{y}{m_i}\|.\\
\end{alignat*}
Hence,  $$Df(0)(d\phi(y))  = Dg(0)(y)$$
for $y$ in a small neighbourhood of $0$. 
This implies that the rank of $f$ and $g$ are equal.
\end{proof}

In the rest of the section we fix $f , g : (\bb K^n,0) \to (\bb K,0)$ to be bi-Lipschitz equivalent smooth function germs, and  fix $\varphi : (\bb K^n,0) \to (\bb K^n,0)$ to be a germ of a bi-Lipschitz homeomorphism such that $f \circ \varphi = g$.
   
To show that the corank of finitely determined smooth germs is also a bi-Lipschitz invariant, we need the following lemmas. It has been known that the multiplicity is a bi-Lipschitz invariant (see \cite{Risler}, \cite{fr}). We present another proof below. 

\begin{lem}\label{lem_multiplicity} The multiplicities $m_f$ and $m_g$ of $f$ and $g$ are equal. 
\end{lem}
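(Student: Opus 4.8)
The plan is to show that bi-Lipschitz equivalence preserves the order of vanishing of a function germ at the origin; the construction of $d\varphi$ recalled above is not needed here, only elementary growth estimates together with Taylor's theorem.

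First I would record a metric characterisation of the multiplicity. For a smooth germ $h \in \ak m_n$ with $T_0 h = h_k + h_{k+1} + \cdots$ and $h_k \neq 0$, Taylor's theorem gives $|h(y)| \lesssim \|y\|^{m_h}$ on a neighbourhood of $0$: each homogeneous piece $h_j$ with $j \geq k$ satisfies $|h_j(y)| \lesssim \|y\|^{k}$ there, and the smooth remainder is $O(\|y\|^{k+1})$. Conversely, choosing $v$ with $h_k(v) \neq 0$ and restricting to the ray $y = tv$ gives $h(tv) = h_k(v) t^k + O(t^{k+1})$, hence $|h(tv)| \sim |t|^{m_h}$ as $t \to 0$. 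Thus $m_h$ is exactly the largest integer $k$ for which $|h(y)| \lesssim \|y\|^{k}$ holds near $0$.

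I would then apply this twice. Taking one endpoint equal to $0$ in the bi-Lipschitz inequality for $\varphi$ and using $\varphi(0) = 0$ gives $\frac{1}{L}\|x\| \leq \|\varphi(x)\| \leq L\|x\|$ near $0$; since $g = f \circ \varphi$, this yields $|g(x)| = |f(\varphi(x))| \lesssim \|\varphi(x)\|^{m_f} \lesssim \|x\|^{m_f}$, and the characterisation above gives $m_g \geq m_f$. The inverse $\psi = \varphi^{-1}$ is again bi-Lipschitz and satisfies $g \circ \psi = f$, so the same argument gives $m_f \geq m_g$. Hence $m_f = m_g$.

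The argument has essentially no hard step; the only point that deserves care is the implication that the growth bound $|g(x)| \lesssim \|x\|^{m_f}$ forces every homogeneous term of degree $< m_f$ in the Taylor expansion of $g$ to vanish. For $\bb K = \bb C$ this is immediate from analyticity; in the smooth real case it is precisely what the directional estimate in the first step supplies, and no finite determinacy hypothesis on $f$ or $g$ is actually used.
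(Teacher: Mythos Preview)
Your proof is correct and follows essentially the same approach as the paper: both arguments use Taylor's theorem to get $|f(y)| \lesssim \|y\|^{m_f}$, transfer this through the bi-Lipschitz estimate $\|\varphi(x)\| \sim \|x\|$ to bound $|g(x)|$, and then restrict to a ray on which the leading homogeneous part does not vanish to see the bound is sharp. The paper picks the direction $e_1$ after a linear coordinate change making $H_f(1,0,\ldots,0)\neq 0$ and $H_g(1,0,\ldots,0)\neq 0$, whereas you simply pick any $v$ with $H_g(v)\neq 0$; the difference is purely cosmetic.
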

\begin{proof}
Write $\varphi = (\varphi_1, \ldots, \varphi_n)$ where $\varphi_i:(\bb K^n, 0) \to (\bb K, 0)$. Assume that $m_f = k$ and $m_g=l$. In a neighbourhood of $0$ we can write
$$ f(x) = H_f + o(\|x\|^k) \text{ and } g(x) = H_g + o(\|x\|^l).$$
By a change of coordinates, we may assume that $H_f(1, 0, \ldots, 0) \neq 0$ and $H_g(1, 0, \ldots,0) \neq 0$. Then, we can write $g$ in two ways:
$$ g (x) = x_1^l +\ldots +o(\|x\|^l)$$
and $$ g(x) = f(\varphi(x)) = \varphi_1^k(x) + \ldots+ o(\|x\|^k).$$
Since $\varphi$ is bi-Lipschitz, each $\varphi_i$, $i=1, \ldots, n$ is Lipschitz. Thus, 
$$\|\varphi_1^k(x) + \ldots+ o(\|x\|^k) \| \lesssim \|x\|^k.$$ Restricting to the $x_1$-axis, we have
$$1=\lim_{x\to 0} \frac{\|\varphi_1^k(x) + \ldots+ o(\|x\|^k)\|}{\| x_1^l +\ldots +o(\|x\|^l)\|}\lesssim \lim_{x_1\to 0}\frac{|x_1|^k}{|x_1|^l} =\lim_{x_1\to 0}|x_1|^{k-l} .$$
This implies that $l\geq k$. Interchanging $f$ with $g$, we have also $l\leq k$. Therefore, $k=l$.
\end{proof}

\begin{lem}\label{lem_cone_equ} The lowest degree homogeneous parts $H_f$ and $H_g$ of $f$ and $g$  are bi-Lipschitz equivalent. 
\end{lem}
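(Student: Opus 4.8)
The plan is to use the infinitesimal bi-Lipschitz map $d\varphi$ from Sampaio's construction (recalled above), together with the equality of multiplicities $m_f = m_g =: k$ from Lemma \ref{lem_multiplicity}, and to show directly that $H_g = H_f \circ d\varphi$ as polynomial maps on $\bb K^n$. Since $d\varphi$ is a bi-Lipschitz homeomorphism of $\bb K^n$ with inverse $d\psi$, its germ at $0$ then conjugates $H_f$ to $H_g$, which is exactly what is claimed.

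First I would record a \emph{rescaling principle}: if $h \in \ak m_n$ has multiplicity $\geq k$ and $H$ denotes its degree-$k$ homogeneous part (with the convention $H = 0$ when the multiplicity is $> k$), then $m_i^{k}\, h(x/m_i) \to H(x)$ uniformly on compact subsets of $\bb K^n$ along any sequence $m_i \to \infty$. Indeed, writing $h = H + \varepsilon$ with $\varepsilon(x)/\|x\|^{k} \to 0$, homogeneity gives $m_i^{k} H(x/m_i) = H(x)$, while
\[
m_i^{k}\,\varepsilon(x/m_i) \;=\; \|x\|^{k}\cdot \frac{\varepsilon(x/m_i)}{\|x/m_i\|^{k}} \;\longrightarrow\; 0
\]
uniformly on any compact set, because there $\|x\|^{k}$ is bounded and $x/m_i \to 0$ uniformly. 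This is the same rescaling already used in the proof of Theorem \ref{rank}.

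Next I would apply this to both $g$ and $f$. Setting $\varphi_{m_i}(x) = m_i\varphi(x/m_i)$, the identity $g = f\circ\varphi$ rescales to
\[
m_i^{k}\, g(x/m_i) \;=\; m_i^{k}\, f\!\left(\tfrac{1}{m_i}\,\varphi_{m_i}(x)\right).
\]
The left-hand side converges to $H_g(x)$ uniformly on compacts by the principle. For the right-hand side put $w_i = \varphi_{m_i}(x)$; since all the $\varphi_{m_i}$ share a single Lipschitz constant $L$ and fix $0$, one has $\|w_i\| \leq L\|x\|$, so as $x$ ranges over a compact set the points $w_i$ remain in a fixed compact set. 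Expanding $f = H_f + \varepsilon_f$ with $\varepsilon_f(y)/\|y\|^{k}\to 0$ gives $m_i^{k} f(w_i/m_i) = H_f(w_i) + \|w_i\|^{k}\,\varepsilon_f(w_i/m_i)/\|w_i/m_i\|^{k}$ (the term being $0$ where $w_i = 0$); the error term tends to $0$ uniformly on compacts, and $H_f(w_i) = H_f(\varphi_{m_i}(x)) \to H_f(d\varphi(x))$ uniformly, because $\varphi_{m_i} \to d\varphi$ uniformly on compacts and $H_f$ is uniformly continuous there. Passing to the limit yields $H_g(x) = H_f(d\varphi(x))$ for all $x \in \bb K^n$, and since $d\varphi$ is a bi-Lipschitz homeomorphism this proves $H_f$ and $H_g$ are bi-Lipschitz equivalent.

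The only genuinely delicate point is this last step, where the inner argument $\varphi_{m_i}(x)$ fed into $f$ is itself moving with $i$: one must first use the uniform Lipschitz bound to trap these arguments inside one compact set before the $o(\cdot)$-estimate on $\varepsilon_f$ can be applied uniformly, and then combine that with the uniform convergence $\varphi_{m_i}\to d\varphi$ and the continuity of $H_f$. Everything else is routine rescaling.
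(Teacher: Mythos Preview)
Your proof is correct and follows essentially the same route as the paper: rescale the identity $g=f\circ\varphi$ by $m_i^{k}$, use $m_f=m_g=k$ and homogeneity to isolate $H_f$ and $H_g$, and pass to the limit along the Sampaio subsequence to obtain $H_g=H_f\circ d\varphi$. In fact you are more careful than the paper, explicitly controlling the error terms uniformly on compacts via the bound $\|\varphi_{m_i}(x)\|\le L\|x\|$ before taking limits.
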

\begin{proof}
We can write  $f$ and $g$ at $0$ as
\begin{equation}\label{eq_lem_cone}
f(x) = H_f(x) + o(\|x\|^{k}) \text{ and } g(x) = H_g(x) + o(\|x\|^{k})
\end{equation} 
Then, for $x \in \bb K^n$ and $m \in \bb N$ big enough we have
$$m^k \left [f (\varphi  (\frac{x}{m} ))\right ] =m^kg (\frac{x}{m}).$$
Substituting the expansions of $f$ and $g$ in the equation (\ref{eq_lem_cone}), we have
$$m^k \left [H_f (\varphi(\frac{x}{m}) ) + o(\|\varphi(\frac{x}{m})\|^{k})\right ]  = m^k\left [H_g (\frac{x}{m}) + o \left(\|\frac{x}{m}\|^{k}\right)\right ] 
$$
Since the multiplicity is a bi-Lipschitz invariant, $H_f$ and $H_g$ are homogeneous polynomials of the same degree $k = m_f = m_g$. Thus,
$$H_f(m \varphi(\frac{x}{m})) + m^k o(\|\varphi(\frac{x}{m})\|^{k}) = H_g(x) + m^k  o\left(\|\frac{x}{m}\|^{k}\right).
$$
Then, by the Arzela--Ascoli theorem, there exists a sequence $\{m_i\}$ of positive integers such that as $i$ tends to $\infty$, the sequence of maps $\{m_i\varphi(\frac{x}{m_i})\}$ converges to a bi-Lipschitz map germ $d\varphi$. By taking limits on both sides of the above equation as $m_i$ tends to $\infty$, we get
$$H_f(d\varphi(x)) = H_g(x).$$ This completes the proof.
\end{proof}

\begin{rem}\label{rem_con_equ} (i) Consider the sequence $\{m_i\}$ associated with the map $\varphi$ as in the proof of Lemma \ref{lem_cone_equ}.  Let $f: \bb K^n, 0 \to \bb K,0$ be a smooth function germ with the multiplicity $m_f = k$. We always have
 $$\lim_{i\to \infty} m_i^{k} f(\varphi(\frac{x}{m_i})) \to H_f(d\varphi(x)).$$

(ii) The result in Lemma \ref{lem_cone_equ} is sharp. In general, $T_0^l(f)$ is not bi-Lipschitz equivalent to $T_0^l(g)$ for $l>k$, where $T_0^l(f)$ denotes the Taylor expansion of $f$ at $0$ up to degree $l$. For example, consider $f(x, y) = x^3 +y^6$ and $g(x, y) = (x +y^2)^3 +y^6$. It is easy to see that $f$ and $g$ are smooth equivalent under the change of coordinates $\varphi(x,y) = (x +y^2,y)$. However, $T_0^4(f) = x^3$ is not bi-Lipschitz equivalent to $T_0^4(g) = x^3 + 3x^2y^2$. 
\end{rem}

\begin{lem}\label{lem_invariant_derivative} Let $L$ be the bi-Lipschitz constant of $\varphi$. Then,
$$L^{-1}\|Df(\varphi(x))\| \leq \|D g(x))\| \leq L \|D f(\varphi(x))\|.$$
\end{lem}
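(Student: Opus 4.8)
The plan is to obtain both inequalities from the relation $g = f\circ\varphi$ by a direct first-order estimate. If $\varphi$ were differentiable the chain rule would give $Dg(x) = Df(\varphi(x))\circ D\varphi(x)$ and the bounds would be immediate from $\|D\varphi\|\le L$ and $\|D\varphi^{-1}\|\le L$; since $\varphi$ is only bi-Lipschitz this is not available, and the point of the proof is to replace the chain rule by an argument with difference quotients, using the Lipschitz bound on $\varphi$ together with the differentiability of the smooth germ $f$.

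First I would fix $x$ near $0$ and a unit vector $v$. As $g$ is smooth, $Dg(x)(v) = \lim_{t\to 0}\big(g(x+tv)-g(x)\big)/t$. Put $p=\varphi(x)$ and $q_t=\varphi(x+tv)$, so that $\|q_t-p\|\le L|t|$ by the bi-Lipschitz condition, and $q_t\to p$ as $t\to 0$. The Taylor expansion of $f$ at $p$, namely $f(q_t)-f(p) = Df(p)(q_t-p)+o(\|q_t-p\|)$, together with $f\circ\varphi=g$, gives
$$|g(x+tv)-g(x)| = |f(q_t)-f(p)| \le \|Df(p)\|\,\|q_t-p\| + o(\|q_t-p\|) \le L\|Df(\varphi(x))\|\,|t| + o(|t|),$$
where the last inequality uses $\|q_t-p\|\le L|t|$. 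Dividing by $|t|$ and letting $t\to 0$ yields $|Dg(x)(v)|\le L\|Df(\varphi(x))\|$; taking the supremum over all unit vectors $v$ gives $\|Dg(x)\| \le L\|Df(\varphi(x))\|$, which is the right-hand inequality.

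For the left-hand inequality I would apply the inequality just proved with the roles of $f$ and $g$ exchanged: writing $\psi=\varphi^{-1}$, which is bi-Lipschitz with the same constant $L$, one has $f=g\circ\psi$, hence $\|Df(y)\|\le L\|Dg(\psi(y))\|$ for $y$ near $0$; setting $y=\varphi(x)$ and using $\psi(\varphi(x))=x$ gives $L^{-1}\|Df(\varphi(x))\|\le\|Dg(x)\|$. The only genuine subtlety is the non-differentiability of $\varphi$, which is exactly why one works with difference quotients rather than the chain rule; in the complex case one reads $Dg(x)$ as the $\bb C$-linear derivative, so that $\|Dg(x)\|=\sup_{\|v\|=1}|Dg(x)(v)|$ still holds and the argument is unchanged.
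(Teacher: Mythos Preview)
Your proof is correct and follows essentially the same approach as the paper: both avoid the unavailable chain rule by working with difference quotients, using the first-order Taylor expansion of the smooth germ $f$ at $\varphi(x)$ together with the bi-Lipschitz bound $\|\varphi(x+tv)-\varphi(x)\|\le L|t|$, and then obtain the reverse inequality by symmetry. Your version is in fact a little more careful, making explicit the passage from directional derivatives to the operator norm via $\|Dg(x)\|=\sup_{\|v\|=1}|Dg(x)(v)|$, whereas the paper's argument is written more informally.
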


\begin{proof} The proof follows directly from the definition of the derivative. First,	
\begin{align*}
D g(x) &= \lim_{\Delta x\to 0} \frac{g(x + \Delta x) -g(x)}{\|\Delta x\|}\\
&= \lim_{\Delta x\to 0} \frac{f\circ \varphi(x + \Delta x) -f\circ \varphi(x)}{\|\Delta x\|}\\
&=  \lim_{\Delta x\to 0} \frac{f(\varphi(x) + \varphi(x + \Delta x) - \varphi(x)) -f(\varphi(x))}{\|\varphi(x + \Delta x) - \varphi(x)\|} \frac{\|\varphi(x + \Delta x) - \varphi(x)\|}{\|\Delta x\|}\\
\end{align*}
Since $\Delta x\to 0$ implies that $\varphi(x + \Delta x) - \varphi(x) \to 0$, we have
$$\lim_{\Delta x\to 0} \frac{f(\varphi(x) + \varphi(x + \Delta x) - \varphi(x)) -f(\varphi(x))}{\|\varphi(x + \Delta x) - \varphi(x)\|}  = D f(\varphi(x)).$$
On the other hand, 
$$L^{-1} \leq \bigg\|\frac{\varphi(x +\Delta x) -\varphi(x)}{\Delta x}\bigg \| \leq L.$$
Therefore, 
$$L^{-1}\|D f(\varphi(x))\| \leq \|D g(x)\| \leq L \| D f(\varphi(x))\|.$$
\end{proof}

\begin{lem}\label{lem_singlocus} $\Sigma_{f}$ and $\Sigma_{g}$ are bi-Lipschitz equivalent.
\end{lem}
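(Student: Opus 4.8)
The plan is to show that the fixed bi-Lipschitz homeomorphism $\varphi$ (with $f\circ\varphi = g$) already carries $\Sigma_g$ onto $\Sigma_f$, so that its restriction is the required bi-Lipschitz homeomorphism of singular sets. Everything follows from Lemma \ref{lem_invariant_derivative}.

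First I would note that for every $x$ in a suitable neighbourhood of $0$ the two-sided estimate
$$L^{-1}\|Df(\varphi(x))\| \leq \|Dg(x)\| \leq L\|Df(\varphi(x))\|$$
of Lemma \ref{lem_invariant_derivative} shows that $Dg(x) = 0$ if and only if $Df(\varphi(x)) = 0$: if $Df(\varphi(x)) = 0$ the right inequality gives $Dg(x) = 0$, and if $Dg(x) = 0$ the left inequality gives $Df(\varphi(x)) = 0$. Since for a function germ ``$\|Dh(x)\| = 0$'' is the same as ``$Dh(x) = 0$'' (the derivative being a covector), this says exactly that $x \in \Sigma_g$ precisely when $\varphi(x) \in \Sigma_f$. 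As $\varphi$ is a homeomorphism germ with inverse $\psi$, we conclude $\varphi(\Sigma_g) = \Sigma_f$ and $\psi(\Sigma_f) = \Sigma_g$ as germs of sets at $0$.

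Then the restriction $\varphi|_{\Sigma_g}\colon \Sigma_g \to \Sigma_f$ is a bijection between germ representatives of the two sets, and it inherits the bi-Lipschitz bounds (\ref{equ_lipschitz_const}) from $\varphi$, with $\psi|_{\Sigma_f}$ as its bi-Lipschitz inverse; hence $\Sigma_f$ and $\Sigma_g$ are bi-Lipschitz equivalent. I do not anticipate a real obstacle here: the only care needed is the routine bookkeeping of choosing a single neighbourhood of $0$ on which both Lemma \ref{lem_invariant_derivative} and the homeomorphism property of $\varphi$ hold, so that the set-germ identity $\varphi(\Sigma_g) = \Sigma_f$ is legitimate.
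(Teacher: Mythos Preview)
Your proposal is correct and follows essentially the same approach as the paper: use the two-sided estimate of Lemma \ref{lem_invariant_derivative} to conclude that $\varphi$ carries $\Sigma_g$ onto $\Sigma_f$. The paper's proof is just a terser version of yours, writing $\|Df(\varphi(x))\| \sim \|Dg(x)\|$ and drawing the same conclusion.
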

\begin{proof} By Lemma \ref{lem_invariant_derivative}, we have 
$$ \|Df(\varphi(x))\| \sim \|Dg(x)\| \text{ and } \|Df(x)\| \sim \|Dg(\varphi^{-1}(x)) \|.$$
This shows that $\varphi$ maps $\Sigma_{g}$ onto the $\Sigma_{f}$.
\end{proof}

\begin{thm} The corank of $f$ is equal to the corank of $g$ at $0$.
\end{thm}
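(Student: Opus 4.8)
The plan is to express the corank of $f$ in terms of invariants already established in this section — the multiplicity and the lowest degree homogeneous part — and then read off the equality. I would first restrict to the case $f, g \in \ak m_n^2$, which is the setting in which corank is a bi-Lipschitz invariant ($x_1$ and $x_1 + x_2^2$ are smoothly, hence bi-Lipschitz, equivalent but have different coranks), noting that by Theorem \ref{rank} bi-Lipschitz equivalence preserves this class. For $f \in \ak m_n^2$ the Hessian of $f$ at $0$ is, up to the factor $2$, the symmetric matrix of the quadratic part $f_2$, and $f_2 \neq 0$ precisely when $m_f = 2$. Hence $\operatorname{corank}(f) = n$ when $m_f \geq 3$, while $\operatorname{corank}(f) = n - \rank(H_f)$ when $m_f = 2$, where $\rank(H_f)$ is the rank of the quadratic form $H_f = f_2$; the same formulas hold for $g$.

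Next I would invoke Lemma \ref{lem_multiplicity} to get $m_f = m_g$. If this common value is at least $3$, then both coranks equal $n$ and the proof is finished, so I would reduce to the case $m_f = m_g = 2$. There $H_f$ and $H_g$ are nonzero quadratic forms on $\bb K^n$, and it remains to prove $\rank(H_f) = \rank(H_g)$. By Lemma \ref{lem_cone_equ} there is a bi-Lipschitz homeomorphism germ $d\varphi$ of $\bb K^n$ with $H_f \circ d\varphi = H_g$, so $H_f$ and $H_g$ are bi-Lipschitz equivalent function germs. I would then apply Lemma \ref{lem_singlocus} to the pair $(H_f, H_g)$: the statements and proofs of Lemma \ref{lem_invariant_derivative} and Lemma \ref{lem_singlocus} rely only on the chain rule and the bi-Lipschitz bounds — not on finite determinacy or isolatedness of the singularity — so they apply verbatim. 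This yields that $d\varphi$ maps $\Sigma_{H_g}$ onto $\Sigma_{H_f}$, so these singular sets are bi-Lipschitz homeomorphic.

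To conclude, I would compute $\Sigma_q$ for a quadratic form $q$ of rank $r$ on $\bb K^n$: after a linear change of coordinates $q = \epsilon_1 x_1^2 + \cdots + \epsilon_r x_r^2$, so that $Dq$ vanishes exactly on $\{x_1 = \cdots = x_r = 0\}$, a linear subspace of $\bb K$-dimension $n - r$. Thus $\Sigma_{H_f}$ and $\Sigma_{H_g}$ are germs of linear subspaces, homeomorphic respectively to $\bb K^{\,n-\rank(H_f)}$ and $\bb K^{\,n-\rank(H_g)}$; since they are homeomorphic, invariance of domain (equivalently, topological invariance of dimension) forces $n - \rank(H_f) = n - \rank(H_g)$. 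Therefore $\operatorname{corank}(f) = n - \rank(H_f) = n - \rank(H_g) = \operatorname{corank}(g)$.

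The step I expect to require the most care is applying Lemma \ref{lem_singlocus} to $H_f$ and $H_g$ in place of $f$ and $g$: unlike $f$ and $g$, their lowest degree parts generally do not have an isolated singularity, so I would want to confirm that none of the auxiliary lemmas of this section tacitly assumed finite determinacy — they do not. The remaining ingredients (identifying the corank with $n$ minus the rank of the quadratic part, the computation of $\Sigma_q$, and topological invariance of dimension) are routine.
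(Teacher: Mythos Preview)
Your proposal is correct and follows essentially the same route as the paper: both handle the corank-$n$ case via multiplicity invariance, then for corank $<n$ apply Lemma \ref{lem_cone_equ} to get the quadratic parts bi-Lipschitz equivalent, apply Lemma \ref{lem_singlocus} to their singular loci, and finish with dimension invariance. The only difference is cosmetic: the paper first invokes the Splitting lemma to write $f \sim_{\al R} R_f + Q_f$ and then identifies the lowest homogeneous part with $Q_f$, whereas you work directly with $H_f = f_2$; your version is slightly cleaner in that it sidesteps the finite-determinacy hypothesis needed for the Splitting lemma, but the underlying argument is the same.
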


\begin{proof}
If the corank of $f$ is $n$, then $m_f \geq 3$. We know from Lemma \ref{lem_multiplicity} that the multiplicity is a bi-Lipschitz invariant, so $m_g \geq 3$. This implies that the corank of $g$ is $n$.

Now we assume that the corank of $f$ is $r$ and that of $g$ is $k$ with $(0 \leq r < k < n)$. By the Splitting lemma, $f \sim_{\al R} R_f(x_1,\ldots,x_r) + Q_f$ and $g \sim_{\al R} R_g(x_1,\ldots,x_k) + Q_g$ where $Q_f$ and $Q_g$ are quadratic forms in further variables. 

By Lemma \ref{lem_cone_equ}, $Q_f$ and $Q_g$ are bi-Lipschitz equivalent. Then, their singular sets $\Sigma_{ Q_f}$ and $\Sigma_{ Q_g}$ are bi-Lipschitz equivalent (see Lemma \ref{lem_singlocus}). Notice that $\Sigma_{Q_f} = \bb K^{r} \times\{ 0\}$  and $\Sigma_{Q_g} = \bb K^{l} \times \{0\}$.  It is known that the dimension of a semialgebraic set is  a bi-Lipschitz invariant, therefore $r$ must be equal to $k$. This completes the proof.  
\end{proof}

\section{ Bi-Lipschitz type of the non-quadratic parts} \label{sec5}
Given a finitely determined germ $F : (\bb K^n \times \bb K^p,0) \to (\bb K,0)$ of corank $n$, we know by the splitting lemma that $F(x,y) \sim_{\al R} f(x) + Q_f(y)$ where $Q$ is a quadratic form in $p$ variables. Moreover, this splitting of $F$ is unique in the sense that if $F \sim_{\al R} g + Q_g$ then $f \sim_{\al R} g$. In this section we prove a weaker version of this result for bi-Lipschitz equivalence.
   
Consider two smooth germs $F, G: (\bb K^n \times \bb K^p, 0) \to (\bb K, 0)$ of the following forms:
$F(x, y) = f(x) + Q_f(y)$ and $G(x, y) = g(x) + Q_g(y)$ where 
$f, g : (\bb K^n,0) \to (\bb K, 0)$ are smooth germs of multiplicities $\geq 3$ and  $Q_f, Q_g$ are of the form $\pm y_1^2 \pm \ldots \pm y_p^2$. We prove the following result.

\begin{thm}\label{thm_splitting_weak}  If $F$ and $G$ are bi-Lipschitz equivalent, then

(i)  $m_f = m_g$;

(ii) $\Sigma_{H_f}$ and $\Sigma_{H_g}$ are bi-Lipschitz equivalent.
\end{thm}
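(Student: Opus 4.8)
The plan is to reduce statement (i) and (ii) of Theorem \ref{thm_splitting_weak} to the results already established in Section \ref{section4}, applied to the germs $F$ and $G$ rather than to $f$ and $g$ directly. Since $F$ and $G$ are bi-Lipschitz equivalent, Lemma \ref{lem_multiplicity} gives $m_F = m_G$. Now observe that the multiplicity of $F(x,y) = f(x) + Q_f(y)$ is governed by the quadratic part whenever $m_f \geq 3$: indeed $Q_f$ contributes a nonzero degree-$2$ form, so $H_F = Q_f$ (as $\pm y_1^2 \pm \cdots \pm y_p^2$ has degree $2 < 3 \leq m_f$) and hence $m_F = 2$, and similarly $m_G = 2$. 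This tells us nothing by itself, so the first real step is to strip off the quadratic parts. The idea is to relate $m_f$ to a more refined invariant of $F$; I expect the cleanest route is via the singular set of the algebraic tangent cone, handled together with part (ii).

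For part (ii), I would apply Lemma \ref{lem_cone_equ} to $F$ and $G$: their lowest-degree homogeneous parts $H_F = Q_f$ and $H_G = Q_g$ are bi-Lipschitz equivalent via the germ $d\varphi$ obtained from the Arzela--Ascoli limit. This only recovers the (already known) equivalence of the quadratic forms, so instead the key is to iterate the blow-up/rescaling construction one level deeper, or to work with the full germ: I would consider the restriction of $F$ to the singular set $\Sigma_{Q_f} = \bb K^n \times \{0\}$ of its quadratic part. By Lemma \ref{lem_singlocus}, $\Sigma_F$ and $\Sigma_G$ are bi-Lipschitz equivalent, and one checks $\Sigma_F = \Sigma_f \times \{0\}$, $\Sigma_G = \Sigma_g \times \{0\}$. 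More importantly, I want to show that $\varphi$ (or the limit map $d\varphi$) carries $\bb K^n \times \{0\}$ to $\bb K^n \times \{0\}$ up to bi-Lipschitz equivalence, so that the restriction $F|_{\bb K^n\times\{0\}} = f$ is bi-Lipschitz equivalent to $G|_{\bb K^n\times\{0\}} = g$; then (i) follows from Lemma \ref{lem_multiplicity} applied to $f,g$, and (ii) follows by applying Lemma \ref{lem_cone_equ} to $f,g$ and noting $\Sigma_{H_f}$, $\Sigma_{H_g}$ are bi-Lipschitz equivalent as zero sets differentiated by the equivalence $H_f(d\varphi) = H_g$ together with Lemma \ref{lem_singlocus}.

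Concretely, I would argue that $\Sigma_{Q_f} = \bb K^n \times \{0\}$ and $\Sigma_{Q_g} = \bb K^n\times\{0\}$ are the "smooth loci of the tangent cone" of $F$ and $G$, or alternatively characterize $\bb K^n \times \{0\}$ intrinsically as the subset of $\Sigma_F$ along which $F$ behaves like a germ of multiplicity $\geq 3$ (while transversally it is quadratic). Using Lemma \ref{lem_invariant_derivative} and the homogeneity argument in the proof of Lemma \ref{lem_singlocus}, one shows $\varphi$ maps a neighbourhood-germ of $\bb K^n\times\{0\}$ to a neighbourhood-germ of $\bb K^n\times\{0\}$; restricting $\varphi$ to these sets gives a bi-Lipschitz homeomorphism germ $(\bb K^n,0)\to(\bb K^n,0)$ conjugating $f$ to $g$ up to the quadratic terms in the normal directions, which contribute $o$-terms that vanish on the restriction. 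Then (i) is immediate and (ii) follows from Lemma \ref{lem_cone_equ} applied to this restricted equivalence.

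The main obstacle I anticipate is justifying that the bi-Lipschitz homeomorphism $\varphi$ respects the splitting, i.e.\ that it really does preserve (up to bi-Lipschitz equivalence) the distinguished subspace $\bb K^n\times\{0\}$ along which the higher-order part lives. There is no Lipschitz splitting lemma available (as the authors emphasize in the introduction), so this cannot be done by a change of coordinates; the argument must be geometric, extracting the subspace from bi-Lipschitz invariant data — the singular set, its tangent cone, and the rescaled limit germ $d\varphi$ — and showing $d\varphi$ maps $\Sigma_{Q_f} = C_{\ak a}(Q_f,0)\cap\Sigma_{Q_f}$ onto $\Sigma_{Q_g}$. Controlling the $o(\|x\|^{m_f})$ error terms uniformly under the rescaling $x \mapsto m\varphi(x/m)$, exactly as in Remark \ref{rem_con_equ}(i), will be the technical heart of the proof.
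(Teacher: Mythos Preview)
Your plan has a genuine gap at the step ``restricting $\varphi$ to these sets gives a bi-Lipschitz homeomorphism germ $(\bb K^n,0)\to(\bb K^n,0)$ conjugating $f$ to $g$.'' Neither $\varphi$ nor $d\varphi$ furnishes such a conjugacy. The original map $\varphi$ need not carry $\bb K^n\times\{0\}$ into itself, and the projected map $\varphi_1(\cdot,0)$ is only Lipschitz, not bi-Lipschitz; moreover $F\circ\varphi(x,0)=G(x,0)$ reads $f(\varphi_1(x,0))+Q_f(\varphi_2(x,0))=g(x)$, with a quadratic error term you have not controlled. The rescaled limit $d\varphi_1(\cdot,0)$ \emph{is} bi-Lipschitz and does preserve $\bb K^n\times\{0\}$, but by Lemma~\ref{lem_cone_equ} it only conjugates $H_F=Q_f$ to $H_G=Q_g$, which on the subspace $\{y=0\}$ says nothing about $f$ and $g$. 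In short, you are attempting a Lipschitz splitting lemma, which the paper explicitly notes is unavailable; the theorem is deliberately a \emph{weaker} statement precisely because $f$ and $g$ are not shown to be bi-Lipschitz equivalent.

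The paper's argument avoids this by working with derivative \emph{norms} rather than function values. From Lemma~\ref{lem_invariant_derivative} one has $\|DF(\varphi(x,0))\|\lesssim\|DG(x,0)\|$; since $DG(x,0)=(Dg(x),0)$ and $Df(\varphi_1(x,0))$ is a component of $DF(\varphi(x,0))$, this yields $\|Df(\varphi_1(x,0))\|\lesssim\|Dg(x)\|$ directly, with no need for $\varphi_1(\cdot,0)$ to be bi-Lipschitz or to conjugate anything. Rescaling this inequality by $m_i^{m_g-1}$ and passing to the limit (exactly the mechanism you mention from Remark~\ref{rem_con_equ}) forces $m_g\le m_f$ and, once $m_f=m_g$, gives $\|DH_f(d\varphi_1(x,0))\|\lesssim\|DH_g(x)\|$, hence $d\varphi_1(\cdot,0)(\Sigma_{H_g})\subset\Sigma_{H_f}$. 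The symmetric estimate via $\psi$ finishes both (i) and (ii). The idea you are missing is that the one-sided derivative bound is all that is needed; trying to upgrade it to an actual bi-Lipschitz equivalence of $f$ and $g$ is both unnecessary and, as far as anyone knows, out of reach.
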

\begin{proof}
Let $\varphi = (\varphi_1, \varphi_2): (\bb K^n \times \bb K^p, 0)\to (\bb K^n \times \bb K^p, 0)$ be a germ of a  bi-Lipschitz homeomorphism such that $F\circ \varphi = G$. Let $\psi = (\psi_1, \psi_2)$ be the inverse of $ \varphi$.  For $m \in \bb N$, set 
$$ \varphi_m = (\varphi_{1,m}, \varphi_{2,m}) = m\varphi\bigg(\frac{x}{m}, \frac{y}{m}\bigg)$$
and $$\psi_m = (\psi_{1,m}, \varphi_{2,m})  = m\psi\bigg(\frac{x}{m}, \frac{y}{m}\bigg).$$
We know that there exists a subsequence $\{m_i\}\subset \bb N$ such that $\varphi_{m_i}$ and $\psi_{m_i}$ uniformly converge to bi-Lipschitz homeomorphisms which we denote by $d\varphi$ and $d\psi$ respectively. Moreover, $d\psi$ is the inverse of $d\varphi$. In fact, $d\varphi = (d\varphi_1, d\varphi_2)$ and $d\psi = (d\psi_1, d\psi_2)$ where $d\varphi_1$ (reps. $ d\varphi_2, d\psi_1, d\psi_2$) is the limit of the sequence $\{\varphi_{1,m_i}\}$ (resp. $\{\varphi_{2,m_i}\}$, $\{\psi_{1,m_i}\}$, $\{\psi_{2,m_i}\}$).

By Lemma \ref{lem_cone_equ} and Lemma \ref{lem_singlocus}, we have $d\varphi(\Sigma_{Q_g}) = \Sigma_{Q_f}$. Since $\Sigma_{Q_g} = \Sigma_{Q_f} = \bb K^n \times \{0\}$, 
$$d\varphi (x, 0) = (d\varphi_1 (x, 0), 0).$$
The map $d\varphi$ is a bi-Lipschitz homeomorphism, so is the map $d\varphi_1(., 0): \bb K^n,0 \to \bb K^n,0$. It is easy to check that the inverse of $d\varphi_1(., 0)$ is the map $d\psi_1(., 0)$. 

By Lemma \ref{lem_invariant_derivative}, for all $(x, y) \in \bb K^n\times\bb K^p$ near $0$ we have

$$\|DF(\varphi(x, y))\| \lesssim \|DG(x, y)\|$$
and 
$$
\|DG(\psi(x, y))\| \lesssim \|DF(x, y)\|.$$
This yields that
\begin{equation}\label{equ_weak_spliting_1}
\|DF(\varphi(x, 0))\| \lesssim \|DG(x, 0)\|,
\end{equation}
and
\begin{equation}\label{equ_weak_spliting_2}
\|DG(\psi(x, 0))\| \lesssim \|DF(x, 0)\|.
\end{equation}
Notice that  $DF(\varphi(x, 0) = (Df(\varphi_1(x,0)), 2\varphi_2(x,0))$ and $DG(x,0) = Dg(x)$. The inequality (\ref{equ_weak_spliting_1}) implies
\begin{equation}\label{equ_week_splitting_3}
\|Df(\varphi_1(x, 0))\| \lesssim \|Dg(x)\|.
\end{equation} 
Similarly, (\ref{equ_weak_spliting_2}) implies
\begin{equation}\label{equ_week_splitting_4}
\|Dg(\psi_1(x, 0))\| \lesssim \|Df(x)\|.
\end{equation} 

We now prove (i). First we prove that $m_g \leq m_f$. Let us consider the subsequence $\{m_i\}$ mentioned at the beginning of the proof. It follows from (\ref{equ_week_splitting_3}) that
\begin{equation}\label{equ_week_splitting_5}
\|m_i^{m_g-1} Df(\varphi_1(\frac{x}{m_i}, 0))\| \lesssim \|m_i^{m_g -1}Dg(\frac{x}{m_i})\|.
\end{equation} 
The right-hand side of (\ref{equ_week_splitting_5}) tends to $D H_g(x)$ as $m_i \to \infty$ (see Remark \ref{rem_con_equ}, (i)), therefore, it is bounded when $x$ is bounded. Recall that $H_g$ is the lowest degree homogeneous polynomial of $g$. Let  $x^*$ be a point near $0$ such that $DH_f(d\varphi_1(x^*,0)) \neq 0$. If $ m_g > m_f$,  then $\|m_i^{m_g-1} Df(\varphi_1(\frac{x^*}{m_i}, 0))\| $ tends to infinity as $m_i \to \infty$, which violates (\ref{equ_week_splitting_3}). Thus, $m_g \leq m_f$. 
By the same arguments applied to (\ref{equ_week_splitting_4}), we get $m_g \geq m_f$. 

To prove (ii), it is enough to show that $d\varphi_1(., 0)(\Sigma_{H_f}) = H_g$, or equivalently, 
$$d\varphi_1(., 0)(\Sigma_{H_g}) \subset \Sigma_{H_f},  \text{ and } d\psi_1(., 0)(\Sigma_{H_f}) \subset \Sigma_{H_g}.$$

Since $m_f = m_g$, the left-hand side of (\ref{equ_week_splitting_5}) tends to $\|DH_f (d\varphi_1(x, 0))\|$ as $m_i \to \infty$. Thus,
$$ \|DH_f (d\varphi_1(x, 0))\| \lesssim \| DH_g(x)\|.$$
This shows that $ d\varphi_1(., 0)$ maps $\Sigma_{H_g}$ into $\Sigma_{H_f}$. Analogously, $d\psi_1(., 0)$ maps $\Sigma_{H_f}$ into $\Sigma_{H_g}$. Therefore, $\Sigma_{H_f}$ and $\Sigma_{H_g}$ are bi-Lipschitz equivalent.
\end{proof}

\begin{example}\rm Consider the families $J_{10}$ and $T_{2, 4,5}$ as in Arnold's list (see Section \ref{sec3}). Recall that germs in $J_{10}$ have the form $x^3 + txy^4 + y^6 + Q$ and germs in $T_{2,4,5}$ have the form $x^4 +y^5 +tx^2y^2 + Q$ where $Q$ is a quadratic form in further variables. Put $f(x,y) = x^3 + txy^4 + y^6$ and $g(x,y)= x^4 +y^5 +tx^2y^2$. It is obvious that $m_f \neq m_g$. By Theorem \ref{thm_splitting_weak}, germs in $J_{10}$ and germs in $T_{2,4,5}$ have different bi-Lipschitz types. 
\end{example}

\section{Lipschitz modality of $J_{10}$}\label{section7}
Henry and Parusi\'nski \cite{hp1} proved the the non-quadratic part of $J_{10}$ which is $x^3 + txy^4 + y^6$ is Lipschitz modal. However, this does not imply that $J_{10}$ is Lipschitz modal. Following the idea of Henry--Parusi\'nski  \cite{hp1}, \cite{hp2} we prove in this section that  $J_{10}$ is also Lipschitz modal. 

We fix $\bb K = \bb C$ or $\bb R$ and consider two  bi-Lipschitz equivalent smooth function germs $f, g: \bb K^n, 0 \to \bb K, 0$. Assume that $\varphi: \bb K^n, 0 \to \bb K^n, 0$ is a germ of a bi-Lipschitz homeomorphism such that $f \circ \varphi = g$. Let $L$ be the bi-Lipschitz constant of $h$. It is shown in Lemma \ref{lem_invariant_derivative} that
\begin{equation}\label{equ_J10_1}
L^{-1}\|Df(\varphi(x))\| \leq \|D g(x))\| \leq L \|D f(\varphi(x))\|.
\end{equation}

Given $\delta> 1$, we define
$$V^\delta (f) =\{x : f(x)\neq 0, \delta^{-1} \|x\|\|D f(x)\| \leq \|f(x)\|\leq  \delta \|x\|\|D f(x)\|\}.$$
It follows from (\ref{equ_J10_1}) that
\begin{equation}\label{equ_J_10_2}
V^{L^{-2} \delta} (f) \subset \varphi(V^\delta(g)) \subset V^{L^{2}\delta}(f).
\end{equation}

Given $\sigma >0$, $d >0$, we define 
$$ W^{\sigma}(f, d) = \{x \in \bb K^n: |f(x)| \leq \sigma \|x\|^d\}.$$
It is obvious that
\begin{equation}\label{equ_J_10_3}
W^{\sigma L^{-d}}(f, d) \subset \varphi(W^\sigma (g,d)) \subset W^{\sigma L^d} (f, d).
\end{equation}
Denote by $\Omega_{\sigma,d}^\delta (f) = V^{\delta}(f) \cap W^{\sigma}(f, d).$ It follows from (\ref{equ_J_10_2}) and (\ref{equ_J_10_3}) that
\begin{equation}\label{equ_J10_4}
 \Omega_{\sigma L^{-d},d}^{\delta L^{-2}} (f) \subset \varphi(\Omega_{\sigma,d}^\delta (g))\subset \Omega_{\sigma L^{d},d}^{\delta L^{2}} (f).
\end{equation}

Now we consider $J_{10}$ that consists of germs of the following form 
$$f_\lambda( u, v, w) =   u^3 -3\lambda^2uv^4 +v^6 + w^2$$
 where $(u, v, w) \in \bb K^2 \times \bb K^p$, $w = (w_1, \ldots, w_p)$ and $w^2 := w_1^2 +\ldots + w_p^2$.
 
Fix $\xi = f_{\lambda}$ with $1\pm 2\lambda^3 \neq 0$. Given $\delta, \sigma>0$. Notice that  $\Omega_{\sigma, 6}^\delta (\xi)$ is a semialgebraic set. Suppose that the germ at $0$ of $\Omega_{\sigma, 6}^\delta (\xi)$  has dimension $\geq 1$. Then, we have the following results.

\begin{lem}\label{lem_4.3}  The tangent cone at $0$ of $\Omega^{\delta}_{\sigma,6}(\xi)$ is contained in the $v$-axis.
\end{lem}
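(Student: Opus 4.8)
The plan is to analyze the defining inequalities of $\Omega^\delta_{\sigma,6}(\xi)$ along an arbitrary real analytic arc $\gamma(s)$ approaching $0$ and show that, unless the arc is tangent to the $v$-axis, one of the two conditions ($V^\delta$ or $W^\sigma(\cdot,6)$) fails for $s$ small. Write $\xi(u,v,w) = u^3 - 3\lambda^2 u v^4 + v^6 + w^2$ with $1 \pm 2\lambda^3 \neq 0$, and note that the curve selection lemma lets us test membership of a point in the tangent cone $C(\Omega^\delta_{\sigma,6}(\xi),0)$ by choosing an analytic arc $\gamma(s) = (u(s), v(s), w(s))$ inside $\Omega^\delta_{\sigma,6}(\xi)$ with $\gamma(0)=0$; the tangent direction is $\lim \gamma(s)/\|\gamma(s)\|$. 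So it suffices to show that for any such arc, the limiting direction has vanishing $u$- and $w$-components.

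First I would put $a = \operatorname{ord}_s u(s)$, $b = \operatorname{ord}_s v(s)$, $c = \operatorname{ord}_s w(s)$ (allowing $+\infty$ if a component vanishes identically), and $e = \min(a,b,c) = \operatorname{ord}_s \|\gamma(s)\|$; the claim ``the tangent direction lies in the $v$-axis'' is exactly the statement $b < a$ and $b < c$ (strictly), i.e. $e = b$ and $e < a$, $e < c$. The condition $W^\sigma(\xi,6)$ forces $\operatorname{ord}_s \xi(\gamma(s)) \geq 6e$. The condition $V^\delta(\xi)$ forces $\operatorname{ord}_s \xi(\gamma(s)) = e + \operatorname{ord}_s \|D\xi(\gamma(s))\|$, since $\|x\|\|D\xi(x)\| \sim |\xi(x)|$ on $V^\delta$. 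So the heart of the matter is a bookkeeping of orders: I would compute $\operatorname{ord}_s \xi(\gamma(s))$ and $\operatorname{ord}_s \|D\xi(\gamma(s))\|$ in terms of $a,b,c$, using that $D\xi = (3u^2 - 3\lambda^2 v^4,\ -12\lambda^2 u v^3 + 6 v^5,\ 2w)$.

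The key case distinctions are: (1) if $c \leq \min(a,b)$, then the $w^2$ term dominates or ties in $\xi$ while $2w$ contributes order $c$ to $D\xi$; combining $\operatorname{ord}\xi \geq 2c$ with $\operatorname{ord}\xi = e + \operatorname{ord}\|D\xi\| \leq e + c$ and $e \leq c$ gives a contradiction unless in fact the $w$-part is negligible, forcing $c > e$; a parallel argument in the pure $u$-$v$ plane handles the possibility $a \le b$. Specifically, if $a \le b$ then along the arc the $u^3$ term has order $3a$; one checks that the ``generic'' cancellation in $\xi$ cannot drop the order below what $V^\delta$ permits — this is precisely where the hypothesis $1 \pm 2\lambda^3 \neq 0$ enters, ruling out the coincidence of orders $3a = a + 4b = 6b$ (which needs $a = 2b$) producing a spurious low-order vanishing of $u^3 - 3\lambda^2 u v^4 + v^6$; when $a = 2b$ the leading coefficient is a nonzero multiple of $1 - 3\lambda^2 t^2 + t^3$ evaluated suitably and the non-degeneracy keeps it from vanishing to too high an order relative to $\|D\xi\|$. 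The main obstacle, I expect, is exactly this finicky case $a = 2b$ (the "$J_{10}$ resonance"): one has to track the subleading terms of $\xi(\gamma(s))$ and of $\|D\xi(\gamma(s))\|$ carefully and see that the $V^\delta$-equality $\operatorname{ord}\xi = e + \operatorname{ord}\|D\xi\|$ is incompatible with $\operatorname{ord}\xi \geq 6b$ unless $a > b$ strictly; the non-degeneracy condition $1\pm 2\lambda^3 \neq 0$ is what makes this incompatibility genuine rather than borderline. Once every case with $a \le b$ or $c \le b$ is eliminated, we are left with $b < a$ and $b < c$, i.e. the tangent direction is the $v$-axis, which is the assertion.
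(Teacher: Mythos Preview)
Your overall strategy---curve selection plus bookkeeping of orders along an analytic arc $\gamma(s)=(u(s),v(s),w(s))$---is exactly the paper's approach. The paper normalizes so that $e=\operatorname{ord}\|\gamma\|=1$ and then eliminates the two bad cases (nonzero $w$-tangent, nonzero $u$-tangent) in essentially the same way you describe.

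However, you are overcomplicating the case $a\le b$ and misidentifying where the hypothesis $1\pm 2\lambda^3\neq 0$ enters. The ``resonance'' $3a=a+4b=6b$ forces $a=2b$, which is \emph{incompatible} with $a\le b$ for positive orders; so no cancellation among $u^3$, $uv^4$, $v^6$ can occur in this regime, and the non-degeneracy condition is irrelevant here. The paper's proof of this lemma does not use $1\pm 2\lambda^3\neq 0$ at all---that hypothesis is needed only later, in Lemma~\ref{lem_4.4} and Theorem~\ref{prop_j10_modal}. The actual argument for $a=e$ (after dispatching $c=e$, which you handle correctly via $W^\sigma$) is the short dichotomy: if $2c\le 3e$ then $\operatorname{ord}\|D\xi\|=c$ (from the $2w$ component) and $V^\delta$ forces $\operatorname{ord}\xi=e+c<2c\le\operatorname{ord}\xi$, a contradiction; if $2c>3e$ then $u^3$ is the unique lowest-order term in $\xi$, so $\operatorname{ord}\xi=3e<6e$, violating $W^\sigma$. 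No subleading terms, no resonance analysis.

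So your plan is correct and would go through, but the ``main obstacle'' you flag is imaginary; strip out the discussion of $a=2b$ and of $1\pm2\lambda^3\neq 0$, and you recover the paper's two-line case split.
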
 

\begin{proof}  Let $\mu = (a, b, c) \in \bb K^{2+p}$ be a unit vector of the tangent cone of $\Omega^{\delta}_{\sigma,6}(\xi)$ at $0$. There exists a real analytic curve $\gamma:[0,\varepsilon) \to \Omega^{\delta}_{\sigma,6}(\xi)$ such that $\gamma(0)=0$ and $\mu = \lim_{t\to 0}\frac{\gamma(t)}{\|\gamma(t)\|}$. Write $\gamma(t) = (\gamma_u(t), \gamma_v(t), \gamma_w(t))$. Reparametrizing $\gamma$ if necessary, we may assume that $\|\gamma(t)\| \sim |t|$.  It suffices to show that $a = c = 0$. 

Indeed, if $c \neq 0$, we have $|\gamma_u(t)| \sim  |t|^\alpha$, $|\gamma_v(t)| \sim  |t|^\beta$ and $|\gamma_w(t)| \sim  |t|$ where $\alpha, \beta \geq 1$ (put $|t|^\infty = 0$ as a convention). This implies $|\xi(\gamma(t))| \sim  |t|^2$. Hence $\gamma(t) \not\subset W^{\sigma}(\xi, 6)$.

If $c = 0, a\neq 0$, we have $|\gamma_u(t)| \sim  |t|$,  $|\gamma_v(t)|\sim |t|^\alpha$  and $|\gamma_w(t)|\sim |t|^\beta$ where $\alpha \geq 1$ and $\beta > 1$.  Recall that $D \xi(u, v, w)= ( 3u^2 -3\lambda^2 v^4, -12\lambda^2 uv^3 + 6v^5, 2w)$. There are two possibilities
\begin{itemize}[leftmargin=.7cm]
\item if $2\beta \leq 3$ then $g(\gamma(t)) \lesssim |t|^{2\beta}$ and $|D \xi(\gamma(t))| \sim |t|^\beta$. Thus, 
$$\|\gamma(t)\|\|D \xi (\gamma(t))\| \sim |t|^{\beta +1} \gg  |t|^{2\beta} \sim |\xi(\gamma(t))|$$ (since $\beta >1$). This implies $\gamma \not\subset V^{\delta}(\xi)$.\\

\item if $2\beta > 3$ then $|\xi(\gamma(t))| \sim |t|^{3}$. Hence, $\gamma(t) \not \subset W^{\sigma}(\xi, 6)$.
\end{itemize}
All the cases above give contradiction, therefore $a$ and $c$ must be equal to $0$.
\end{proof}

\begin{lem}\label{lem_4.4}
On the set $\Omega^{\delta}_{\sigma,6}(\xi)$, we have

(1) $u = \pm v^2 + O(v^3)$,

(2) $g(u,v,w) = (1\mp 2\lambda^3) v^6 + O(v^7)$.
\end{lem}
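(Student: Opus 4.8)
The plan is to work on the semialgebraic set $\Omega^{\delta}_{\sigma,6}(\xi)$, where we have already established (Lemma \ref{lem_4.3}) that its tangent cone at $0$ lies in the $v$-axis. This means that along any real analytic curve $\gamma(t) = (\gamma_u(t), \gamma_v(t), \gamma_w(t))$ in $\Omega^{\delta}_{\sigma,6}(\xi)$ with $\|\gamma(t)\| \sim |t|$, we have $|\gamma_v(t)| \sim |t|$ and $\gamma_u(t), \gamma_w(t) = o(|t|)$; by the curve selection lemma it suffices to verify (1) and (2) along all such curves. So reparametrize by $v$, writing $u = u(v)$ and $w = w(v)$ with $u, w = o(v)$, and analyze the constraints imposed by the membership conditions $\gamma \subset V^{\delta}(\xi)$ and $\gamma \subset W^{\sigma}(\xi, 6)$.

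First I would extract the behaviour of $w$. From $D\xi = (3u^2 - 3\lambda^2 v^4, -12\lambda^2 u v^3 + 6v^5, 2w)$ and the $V^{\delta}$ condition $\delta^{-1}\|x\|\|D\xi(x)\| \leq |\xi(x)| \leq \delta\|x\|\|D\xi(x)\|$ combined with the $W^{\sigma}$ condition $|\xi(x)| \lesssim \|x\|^6 \sim |v|^6$, one gets $\|x\|\|D\xi(x)\| \lesssim |v|^6$, hence $\|D\xi(x)\| \lesssim |v|^5$. In particular $|w| \lesssim |v|^5$, so $w^2 \lesssim |v|^{10}$ is negligible compared to $v^6$; likewise the $w$-term contributes nothing to the leading orders of $\xi$ or $D\xi$. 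Thus effectively the analysis reduces to the two-variable germ $u^3 - 3\lambda^2 u v^4 + v^6$, which is the non-quadratic part, and the problem becomes: on the corresponding $\Omega$-set for this plane curve singularity, show $u = \pm v^2 + O(v^3)$.

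Next I would pin down $u$ as a function of $v$. Write $u = c_1 v^a(1 + o(1))$ with $a > 1$ (or $u \equiv 0$ on the curve). The first component of $D\xi$ is $3u^2 - 3\lambda^2 v^4$; the second is $v^3(6v^2 - 12\lambda^2 u)$. The key point is to compare the order of $\xi = u^3 - 3\lambda^2 u v^4 + v^6$ against $|v|\cdot\|D\xi\|$. If $a > 2$, then $u^3, uv^4 \ll v^6$, so $\xi \sim v^6$, while $D\xi$ has first component $\sim v^4$ and $\|x\|\|D\xi\| \sim |v|^5 \ll |v|^6$, contradicting $V^{\delta}$; if $1 < a < 2$, then the first component of $D\xi$ is $\sim u^2 \sim v^{2a}$ with $2a < 4$, and one checks $\xi \sim u^3 \sim v^{3a}$ (the term $uv^4 \sim v^{a+4}$ has higher order since $a<2 \Rightarrow 3a < a+4$), so $|v|\|D\xi\| \sim v^{2a+1}$; since $3a$ versus $2a+1$ gives $3a > 2a+1 \iff a > 1$, we get $\xi \ll |v|\|D\xi\|$, again contradicting $V^{\delta}$ — unless the leading term $3u^2 - 3\lambda^2 v^4$ cancels. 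The only way to rescue either degenerate regime, or the boundary case, is to have $a = 2$ with $3u^2 - 3\lambda^2 v^4$ having its leading $v^4$-coefficient vanish, i.e. $u = \pm\lambda v^2 + (\text{higher order})$; but wait — one must be careful: the normal form here after the change $x \mapsto x + \lambda y^2$ is $u^3 + \ldots$, so let me instead match to the form as written, $u^3 - 3\lambda^2 u v^4 + v^6$; completing analysis of $3u^2 = 3\lambda^2 v^4$ forces $u^2 = \lambda^2 v^4 + o(v^4)$, hence $u = \pm\lambda v^2 + o(v^2)$ — this is where the precise normalization of the coefficient (and the relabeling producing the clean "$\pm v^2$") must be tracked; I expect the statement "$u = \pm v^2 + O(v^3)$" reflects the chosen normal form and possibly an absorbed constant, so I would reconcile signs and constants carefully at this step. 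Once $u = \pm v^2 + O(v^3)$ is established, substituting into $g = \xi = u^3 - 3\lambda^2 u v^4 + v^6$ (dropping $w^2 \lesssim v^{10}$) gives $(\pm v^2)^3 \mp 3\lambda^2 v^2 \cdot v^4 + v^6 + O(v^7) = (\pm 1 \mp 3\lambda^2 + 1)v^6 + O(v^7)$; again reconciling with the paper's $(1 \mp 2\lambda^3)v^6$ means the bookkeeping of which normal form ($-3\lambda^2 u v^4$ versus the $\lambda$ of $t x y^4$) is in force must be done with care — this constant-chasing, and in particular confirming the exceptional condition $1 \pm 2\lambda^3 \neq 0$ is exactly what guarantees the $v^6$-coefficient is nonzero, is the main obstacle; the structural argument (tangent cone in $v$-axis $\Rightarrow$ reparametrize by $v$ $\Rightarrow$ Newton-polygon-style order comparison forces $u \sim \pm v^2$) is routine once set up.
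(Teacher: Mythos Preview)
Your overall strategy matches the paper's: use Lemma \ref{lem_4.3} to get $\|x\|\sim|v|$, combine with $|\xi|\lesssim\|x\|^6$ and $\|x\|\,\|D\xi\|\sim|\xi|$ to obtain $\|D\xi\|\lesssim|v|^5$, and then extract constraints on $u$ and $w$. You carry this out correctly for $w$.

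Where you diverge from the paper is in handling $u$. After obtaining $\|D\xi\|\lesssim|v|^5$, the paper simply reads off the \emph{first} component as well: $|3(u^2-\lambda^2 v^4)|\lesssim|v|^5$ gives $u^2=\lambda^2 v^4+O(v^5)$, hence $u=\pm\lambda v^2+O(v^3)$ in one line. Your Newton-polygon case analysis on the order $a$ of $u$ is unnecessary, and as written it only yields $u=\pm\lambda v^2+o(v^2)$ rather than the sharper $O(v^3)$. (Also, in your $a>2$ case the inequality is reversed: as $v\to 0$ one has $|v|^5\gg|v|^6$, not $\ll$; the contradiction with $V^\delta$ still holds, but via the lower bound $\delta^{-1}\|x\|\,\|D\xi\|\le|\xi|$.)

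Your worry about constants is well-founded but is a typo in the paper, not a flaw in your reasoning. Statement~(1) should read $u=\pm\lambda v^2+O(v^3)$; the paper's own proof writes this, and it is the form used in Lemma \ref{lem_4.5} and Theorem \ref{prop_j10_modal}. With $u=\pm\lambda v^2+O(v^3)$ and $w=O(v^5)$, substitution into $\xi=u^3-3\lambda^2 uv^4+v^6+w^2$ gives
\[
(\pm\lambda^3\mp 3\lambda^3+1)v^6+O(v^7)=(1\mp 2\lambda^3)v^6+O(v^7),
\]
which is exactly (2); no further bookkeeping is needed.
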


\begin{proof}
We have $\partial \xi /\partial u = 3(u^2 -\lambda^2v^4)$, $\partial \xi /\partial v = 6v^3(v^2 -2\lambda^2u)$, $\partial \xi /\partial w = 2w$. Let $x= (u, v, w) \in \Omega^{\delta}_{\sigma,6}(\xi)$, then  $\|D \xi (x)\| \|x\| \sim |\xi (x)|$ and
 $|\xi(x)| \lesssim \|x\|^6$. By Lemma \ref{lem_4.3}, we have $\|x\| \sim |v|$. This implies that $\|D \xi(x)\| \lesssim  \|x\|^5 \sim |v|^5$. Therefore,
$$
\begin{cases}
|3(u^2 -\lambda^2v^4)| \lesssim |v|^5,\\
|6v^3(v^2 -2\lambda^2u)| \lesssim |v|^5 \\
2|w|  \lesssim |v|^5
\end{cases}
$$
This implies that $u = \pm \lambda v^2 + O(v^3)$, $w = O(v^5)$, and hence we have formula (1). The second formula then follows directly from (1).  
\end{proof}

\begin{lem}\label{lem_4.5} Let $\{a_k\}, \{b_k\}$ be two sequences of points in $\Omega^{\delta}_{\sigma,6}(\xi)$ converging to $0$ such that $\|a_k - b_k\|\lesssim \|a_k\|^{1+\varepsilon}$ for some $\varepsilon >0$. Then,
$$\lim_{k\to \infty}  \frac{\xi(a_k)}{\xi(b_k)} = c,$$
where $c$ is one of the following values $\{1, \frac{1+2\lambda^3}{1-2\lambda^3}, \frac{1-2\lambda^3}{1+2\lambda^3}\}$.
\end{lem}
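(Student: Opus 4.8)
The plan is to exploit the asymptotic description of $\Omega^{\delta}_{\sigma,6}(\xi)$ obtained in Lemmas \ref{lem_4.3} and \ref{lem_4.4}, which essentially reduces the set to two ``branches'' near $0$, distinguished by the sign $\pm$ appearing in $u = \pm\lambda v^2 + O(v^3)$ (I will write the sign of a point as $\epsilon(x)\in\{+,-\}$, noting it is locally constant away from $v=0$). On a point $x=(u,v,w)$ of the set, Lemma \ref{lem_4.4}(2) gives $\xi(x) = (1\mp 2\lambda^3)v^6 + O(v^7)$, so $\xi(x) \sim v^6$ with the constant $(1\mp 2\lambda^3)$ depending only on the branch. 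The first step is therefore to record that on $\Omega^{\delta}_{\sigma,6}(\xi)$ one has $\xi(x)/v(x)^6 \to (1\mp 2\lambda^3)$ as $x\to 0$ along the corresponding branch, uniformly.

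Next I would compare $v(a_k)$ and $v(b_k)$. From $\|a_k-b_k\| \lesssim \|a_k\|^{1+\varepsilon}$ and $\|a_k\|\sim|v(a_k)|$, $\|b_k\|\sim|v(b_k)|$ (Lemma \ref{lem_4.3}), the $v$-coordinates satisfy $|v(a_k)-v(b_k)| \lesssim |v(a_k)|^{1+\varepsilon}$, hence $v(b_k) = v(a_k)(1+o(1))$ and in particular $v(b_k)^6/v(a_k)^6 \to 1$. Consequently
\[
\frac{\xi(a_k)}{\xi(b_k)} = \frac{\xi(a_k)/v(a_k)^6}{\xi(b_k)/v(b_k)^6}\cdot\frac{v(a_k)^6}{v(b_k)^6}
\]
and the last factor tends to $1$. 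The remaining quotient tends to $(1\mp 2\lambda^3_{a})/(1\mp 2\lambda^3_{b})$ where the signs are those of the branches containing $a_k$ and $b_k$ respectively (passing to a subsequence so the branch of each sequence is eventually constant). If both sequences lie eventually on the same branch the limit is $1$; if they lie on opposite branches the limit is $\frac{1+2\lambda^3}{1-2\lambda^3}$ or its reciprocal. This accounts for exactly the three listed values $c\in\{1,\frac{1+2\lambda^3}{1-2\lambda^3},\frac{1-2\lambda^3}{1+2\lambda^3}\}$, and the hypothesis $1\pm 2\lambda^3\neq 0$ guarantees these are finite and nonzero so all the divisions above are legitimate.

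A technical point to handle carefully is the case $v(a_k)=0$ (or $v(b_k)=0$): by Lemma \ref{lem_4.4}(2) this would force $\xi(a_k)$ to be $O$ of a higher power and one must check such points are excluded or give the same conclusion. Since points of $\Omega^{\delta}_{\sigma,6}(\xi)$ satisfy $\xi(x)\neq 0$ and $\|x\|\sim|v|$ by Lemma \ref{lem_4.3}, we cannot have $v(x)=0$ with $x\neq 0$ on the set, so this degeneracy does not occur; I would state this explicitly. Another point is the uniformity of the $O(v^7)$ error in Lemma \ref{lem_4.4}(2): it comes from a semialgebraic (Puiseux-type) estimate on the semialgebraic set $\Omega^{\delta}_{\sigma,6}(\xi)$, so the error is genuinely $\lesssim |v|^7$ with a constant independent of the point, which is what makes $\xi(x)/v(x)^6 \to (1\mp 2\lambda^3)$ hold without choosing a curve.

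The main obstacle I expect is the bookkeeping around the branch structure: one must justify that near $0$ (away from $v=0$, which is not on the set) $\Omega^{\delta}_{\sigma,6}(\xi)$ decomposes into pieces on each of which the sign $\pm$ in Lemma \ref{lem_4.4}(1) is constant, and that an arbitrary sequence in the set — after passing to a subsequence — eventually stays on one such piece. This follows from the semialgebraicity of $\Omega^{\delta}_{\sigma,6}(\xi)$ together with the fact that $u/v^2 \to \pm\lambda$ forces eventual constancy of the sign along any convergent-to-$0$ sequence (two distinct limit signs would be separated by a definite gap $\sim 2|\lambda|$), so there is no real difficulty, only care. Everything else is the short computation displayed above.
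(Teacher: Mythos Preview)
Your proposal is correct and follows essentially the same route as the paper: use Lemma~\ref{lem_4.4}(2) to write $\xi(a_k)=(1\mp 2\lambda^3)v_k^6+O(v_k^7)$ and $\xi(b_k)=(1\mp 2\lambda^3){v'_k}^6+O({v'_k}^7)$, then use Lemma~\ref{lem_4.3} and the hypothesis to get $|v_k-v'_k|\lesssim|v_k|^{1+\varepsilon}$, hence $v_k^6/{v'_k}^6\to 1$, and read off the three possible limits according to the branch signs. The paper's own proof is in fact terser than yours (it ends with ``The result then follows'' after the estimate on $|v_k-v'_k|$), and your extra remarks about excluding $v=0$, the uniformity of the $O(v^7)$ term, and passing to a subsequence to fix the branch signs are all appropriate clarifications---indeed the paper itself invokes the subsequence step later, in the proof of Theorem~\ref{prop_j10_modal}, rather than inside this lemma.
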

\begin{proof} Let $a_k = (u_k, v_k, w_k)$ and $b = (u'_k, v_k', w'_k)$. By Lemma \ref{lem_4.4}, 
$$ g(a_k) = (1\mp2 \lambda^3) v_k^6 + O(v_k^7)$$
and
$$  g(b_k) = (1\mp2\lambda^3) {v'_k}^6 + O({v'_k}^7).$$
Since $a_k \in \Omega_{\sigma, 6}^\delta (\xi)$, by Lemma \ref{lem_4.3}, $|a_k|\sim |v_k|$. Moreover, 
$$|v_k-v'_k| \leq |a_k - b_k| \lesssim |a_k|^{1+\varepsilon} \sim |v_k|^{1+\varepsilon}.$$
The result then follows.
\end{proof}

We now prove the main result of the section.
\begin{thm}\label{prop_j10_modal} Assume that $1\pm 2\lambda^3 \neq 0$ and $1\pm 2\lambda'^3 \neq 0$. If $f_\lambda$ and $f_{\lambda'}$ are bi-Lipschitz equivalent then $\lambda^3 = \pm\lambda'^3$. 
\end{thm}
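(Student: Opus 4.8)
The plan is to extract the modulus $\lambda^3$ (up to sign) as a bi-Lipschitz invariant out of the ratio behaviour established in Lemma \ref{lem_4.5}. Suppose $\varphi: \bb K^n,0 \to \bb K^n,0$ is a bi-Lipschitz homeomorphism with $f_\lambda \circ \varphi = f_{\lambda'}$ and bi-Lipschitz constant $L$. Fix some $\delta > 1$ (say $\delta$ large) and note that $\Omega^{\delta}_{\sigma,6}(f_{\lambda'})$ has a germ of dimension $\geq 1$ at $0$: indeed along the curve $u = \mp\lambda' v^2$, $w = 0$ one has $|f_{\lambda'}| \sim |v|^6 \sim \|x\|^6$ and $\|x\|\|Df_{\lambda'}(x)\| \sim |v|\cdot|v|^5 \sim |v|^6$, so such points lie in $\Omega^{\delta}_{\sigma,6}(f_{\lambda'})$ for suitable $\sigma,\delta$. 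By the inclusions \eqref{equ_J10_4}, $\varphi$ carries $\Omega^{\delta}_{\sigma,6}(f_{\lambda'})$ into $\Omega^{\delta L^2}_{\sigma L^6,6}(f_\lambda)$ (and the inverse does the reverse), so both families of $\Omega$-sets, for all sufficiently large parameters, are nonempty of dimension $\geq 1$, and Lemmas \ref{lem_4.3}--\ref{lem_4.5} apply to each of $f_\lambda, f_{\lambda'}$.

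Next I would set up the comparison of ratios. Pick a sequence $b_k \to 0$ in $\Omega^{\delta}_{\sigma,6}(f_{\lambda'})$ and a second sequence $b_k'$ in the same set with $\|b_k - b_k'\| \lesssim \|b_k\|^{1+\varepsilon}$, chosen so that the limiting ratio $f_{\lambda'}(b_k)/f_{\lambda'}(b_k')$ realizes the ``interesting'' value $c' = \frac{1+2\lambda'^3}{1-2\lambda'^3}$ rather than $1$: concretely, take $b_k$ near the branch $u = +\lambda' v^2$ and $b_k'$ near the branch $u = -\lambda' v^2$ with the same $v$-coordinate, so by Lemma \ref{lem_4.4}(2) the ratio tends to $\frac{1-2\lambda'^3}{1+2\lambda'^3}$ or its inverse. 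Set $a_k = \varphi(b_k)$, $a_k' = \varphi(b_k')$. These lie in $\Omega^{\delta L^2}_{\sigma L^6,6}(f_\lambda)$; since $\varphi$ is bi-Lipschitz, $\|a_k\| \sim \|b_k\|$ and $\|a_k - a_k'\| \lesssim \|a_k - a_k'\| \leq L\|b_k - b_k'\| \lesssim \|b_k\|^{1+\varepsilon} \sim \|a_k\|^{1+\varepsilon}$, so the hypotheses of Lemma \ref{lem_4.5} (applied to $f_\lambda$ with the enlarged parameters) are met. Because $f_\lambda(a_k) = f_\lambda(\varphi(b_k)) = f_{\lambda'}(b_k)$ and likewise for $a_k'$, we get
$$\lim_{k\to\infty}\frac{f_{\lambda'}(b_k)}{f_{\lambda'}(b_k')} = \lim_{k\to\infty}\frac{f_\lambda(a_k)}{f_\lambda(a_k')} \in \left\{1,\ \frac{1+2\lambda^3}{1-2\lambda^3},\ \frac{1-2\lambda^3}{1+2\lambda^3}\right\}.$$
Thus the value $\frac{1+2\lambda'^3}{1-2\lambda'^3}$ (or its reciprocal) must coincide with one of $1$, $\frac{1+2\lambda^3}{1-2\lambda^3}$, $\frac{1-2\lambda^3}{1+2\lambda^3}$; ruling out the value $1$ (which forces $\lambda'^3 = 0$, excluded) leaves $\frac{1+2\lambda'^3}{1-2\lambda'^3} = \frac{1\pm2\lambda^3}{1\mp2\lambda^3}$, and solving the Möbius relation gives $\lambda'^3 = \pm\lambda^3$.

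The main obstacle I anticipate is not the algebra at the end but the two ``realization'' steps that feed Lemma \ref{lem_4.5}: first, constructing the sequences $b_k, b_k'$ so that the limit is genuinely the non-trivial value and not $1$ — this requires knowing the germ of $\Omega^{\delta}_{\sigma,6}(f_{\lambda'})$ actually has \emph{both} branches $u = \pm\lambda' v^2$ present (Lemma \ref{lem_4.4} only says points \emph{on} the set satisfy $u = \pm\lambda'v^2 + O(v^3)$, so one should check each sign is attained), and that with a common $v$-coordinate the two points are $\|b_k\|^{1+\varepsilon}$-close; second, verifying that Lemma \ref{lem_4.5} remains valid after the parameters $\delta,\sigma$ are inflated by powers of $L$ — this is fine since Lemmas \ref{lem_4.3}--\ref{lem_4.5} hold for all $\delta,\sigma$, but one must state it carefully and note that $\|a_k\|\sim|v(a_k)|$ still holds on the larger set. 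A minor point: one must also allow that the limiting ratio from the $f_\lambda$ side could a priori be $1$ while the $f_{\lambda'}$ side is not; this is exactly what is excluded by the explicit curve construction, since the ratio is independent of which set we compute it on.
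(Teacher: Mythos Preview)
Your approach is essentially the paper's: use the polar branches $u=\pm\lambda v^2$, $w=0$ to produce sequences whose ratio realises $(1\mp2\lambda^3)/(1\pm2\lambda^3)$, push them through the bi-Lipschitz map, and invoke Lemma~\ref{lem_4.5} on the other side. The concerns you flag about both branches being present and about Lemma~\ref{lem_4.5} holding for inflated parameters are exactly the points the paper handles, and your resolutions are the right ones (the polar curve itself witnesses both signs, and Lemmas~\ref{lem_4.3}--\ref{lem_4.5} are stated for arbitrary $\delta,\sigma$).

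There is, however, one genuine slip. You write ``ruling out the value $1$ (which forces $\lambda'^3=0$, excluded)'', but $\lambda'=0$ is \emph{not} excluded by the hypothesis $1\pm2\lambda'^3\neq 0$. Your single-direction argument only yields $\frac{1-2\lambda'^3}{1+2\lambda'^3}\in\{1,\frac{1-2\lambda^3}{1+2\lambda^3},\frac{1+2\lambda^3}{1-2\lambda^3}\}$, and if this ratio happens to equal $1$ you learn nothing about $\lambda$. The paper closes this gap by running the same construction symmetrically (starting from the polar curve of $f_\lambda$ and applying $\varphi^{-1}$), obtaining also $\frac{1-2\lambda^3}{1+2\lambda^3}\in\{1,\frac{1-2\lambda'^3}{1+2\lambda'^3},\frac{1+2\lambda'^3}{1-2\lambda'^3}\}$; the two constraints together force $\lambda^3=\pm\lambda'^3$ in all cases, including $\lambda=0$ or $\lambda'=0$. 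You should add this symmetric step rather than asserting the case is excluded.
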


\begin{proof}
For $1\pm 2\lambda^3 \neq 0$,  we consider the polar curve of $f_\lambda$
$$\Gamma_\lambda = \{f_\lambda/ \partial u = \partial f_\lambda/ \partial w = 0\} = \{u =\pm \lambda v^2, w = 0\}.$$ 
The restrictions of $f_\lambda$ and $D f_\lambda$ to $\Gamma_\lambda$ are
$$f_\lambda|_{\Gamma_\lambda} = (1\pm 2\lambda^3)v^6$$
and 
$$Df_\lambda|_{\Gamma_\lambda} = (0, \mp 12\lambda^3v^5 + 6v^5, 0).$$
We observe that they  satisfy the following conditions: (i) $|f_\lambda(x)| \lesssim \|x\|^6$; (ii) $\|x\|\|Df(x)\| \sim |f_\lambda(x)|$.
Therefore, for a given $L \geq 1$ we can choose $\delta, \sigma >0$ big enough such that the set
 $$\Omega^{\delta L^{-2}}_{\sigma L^{-d},6}(f_\lambda) = V^{\delta L^{-2}} (f_\lambda) \cap W^{\sigma L^{-6}}(f_\lambda,6)$$ contains the polar curve $\Gamma_\lambda$.
 
Now, since $f_\lambda$ and $f_{\lambda'}$ are bi-Lipschitz equivalent, there is a  germ of a bi-Lipschitz homeomorphism $h: (\bb K^{2+p}, 0) \to (\bb K^{2+p},0)$ such that $f_\lambda\circ h = f_{\lambda'}$.  We assume that $L\geq 1$ is the bi-Lipschitz constant of $h$.

Take $d = 6$. By  (\ref{equ_J10_4}), we have 
\begin{equation}\label{equ_J10_5}
 \Omega_{\sigma L^{-6},6}^{\delta L^{-2}} (f_\lambda) \subset h( \Omega_{\sigma,6}^\delta (f_{\lambda'})) \subset \Omega_{\sigma L^{6},6}^{\delta L^{2}} (f_\lambda).
\end{equation}
Choose $\delta, \sigma>0$ large enough such that $\Omega_{\sigma L^{-6},6}^{\delta L^{-2}} (f_\lambda)$ contains the polar curve $\Gamma_{\lambda}$ and $\Omega_{\sigma L^{-6},6}^{\delta L^{-2}} (f_{\lambda'})$ contains the polar curve $\Gamma_{\lambda'}$.

The polar curve $\Gamma_{\lambda} \subset \Omega_{\sigma L^{-6},6}^{\delta L^{-2}} (f_\lambda)$ has two branches $\gamma_1(v) = (\lambda v^2, v, 0)$ and $\gamma_2(v) = (-\lambda v^2, v, 0)$.  The restriction of $f_\lambda$ to these branches are $f_\lambda (\gamma_1(v)) = (1- 2\lambda^3)v^6$ and $f_\lambda( \gamma_2(v)) = (1+ 2\lambda^3)v^6$. Thus, 
$$\lim_{v \to 0} \frac{f_\lambda (\gamma_1(v))}{f_\lambda (\gamma_2(v))} = \frac{1-2\lambda^3}{1+2\lambda^3}.$$
Take $x_k = (\lambda v_k^2, v_k, 0) \in \gamma_1$ and $x'_k = (-\lambda v_k^2, v_k, 0) \in \gamma_2$ where $\lim_{k\to \infty} v_k = 0$. Obviously, 
 $$\lim_{k \to \infty}\frac{f_\lambda(x_k)}{f_\lambda(x_k')} =  \frac{1-2\lambda^3}{1+2\lambda^3}.$$
Put $\tilde{x}_k = h^{-1}(x_k)$ and  $\tilde{x}'_k = h^{-1}(x'_k)$.
By (\ref{equ_J10_5}), $\tilde{x}_k$ and $\tilde{x}'_k $ belong to $\Omega^{\delta}_{\sigma, 6}(f_{\lambda'})$. Since $|x_k - x_k'| \lesssim |x_k|^{2}$ and $h^{-1}$ is bi-Lipschitz, $|\tilde{x}_k - \tilde{x}_k'| \lesssim |\tilde{x}_k|^{2}$. Taking a subsequence if necessary we may assume $\lim_{k\to \infty} \frac{ f_{\lambda'}(\tilde{x}_k )}{ f_{\lambda'}(\tilde{x}'_k)} $ exists, and hence the limit is one of the following values $\{1, \frac{1- 2\lambda'^3}{1+2\lambda'^3} , \frac{1+ 2\lambda'^3}{1-2\lambda'^3} \}$ (see Lemma \ref{lem_4.5}). 

On the other hand,     
$$\lim_{k \to \infty} \frac{f_{\lambda'}(\tilde{x}_k)}{ f_{\lambda'}(\tilde{x}_k')}  = \lim_{k \to \infty}\frac{f_\lambda(x_k)}{f_\lambda(x_k')} =  \frac{1-2\lambda^3}{1+2\lambda^3}.$$
This implies $\frac{1-2\lambda^3}{1+2\lambda^3} \in  \{1, \frac{1- 2\lambda'^3}{1+2\lambda'^3} , \frac{1+ 2\lambda'^3}{1-2\lambda'^3} \}$. The same arguments applied to $\Gamma_{\lambda'}$ we have 
$\frac{1-2\lambda'^3}{1+2\lambda'^3} \in  \{1, \frac{1- 2\lambda^3}{1+2\lambda^3} , \frac{1+ 2\lambda^3}{1-2\lambda^3} \}$. Therefore, $\lambda^3 =\pm \lambda'^3$. \end{proof}

\section{Bi-Lipschitz determinacy of smooth function germs}\label{section8}
This section is devoted to proving that all families of singularities in Theorem \ref{thm_arnold2} which are not enclosed in brackets are bi-Lipschitz trivial. Our idea is to prove the existence of vector fields that satisfy the Thom--Levine criterion (see Theorem \ref{thm_ThomLevine}).
\subsection{A criterion for Lipschitz functions}

We recall a classical criterion to verify if a given function is Lipschitz. 

\begin{lem}\label{lem_bounded_derivative}
Let $U$ be a convex open subset of $\bb R^n$ and $f: U \to \bb R$ be a continuous semialgebraic function. If $Df(x)$ exists and $\|Df(x)\| \leq M$ except finitely many points in $U$, then $f$ is Lipschitz with the Lipschitz constant also bounded by $M$.  
\end{lem}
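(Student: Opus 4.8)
The plan is to reduce the statement to the classical mean value inequality for differentiable functions, handling the exceptional set by a continuity argument. First I would fix two points $x, y \in U$; since $U$ is convex, the segment $[x,y]$ lies entirely in $U$. The naive approach is to apply the fundamental theorem of calculus along this segment to the function $t \mapsto f((1-t)x + ty)$, but this fails directly because the segment may pass through one of the finitely many bad points where $Df$ does not exist or is not bounded by $M$.

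To circumvent this, I would argue as follows. Let $S$ be the finite exceptional set. If the open segment $(x,y)$ avoids $S$, then $t \mapsto f((1-t)x+ty)$ is continuous on $[0,1]$, differentiable on $(0,1)$ with derivative of norm at most $M\|x-y\|$, so the classical mean value inequality gives $|f(x)-f(y)| \leq M\|x-y\|$. If the segment does meet $S$, I would perturb: since $S$ is finite and $U$ is open, for points $x', y'$ near $x, y$ the segment $[x',y']$ generically avoids $S$ (the set of pairs whose connecting segment hits a given point of $S$ has empty interior — it is contained in a finite union of lower-dimensional sets). So I can choose sequences $x_k \to x$, $y_k \to y$ with $[x_k, y_k] \cap S = \emptyset$, apply the inequality $|f(x_k)-f(y_k)| \leq M\|x_k - y_k\|$, and pass to the limit using continuity of $f$ to conclude $|f(x)-f(y)| \leq M\|x-y\|$.

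Alternatively, and perhaps more cleanly, one can break the segment $[x,y]$ at the finitely many bad points it contains: these divide $[x,y]$ into finitely many closed subsegments on each of whose interiors $f$ is differentiable with $\|Df\| \le M$; apply the mean value inequality on each piece (using continuity of $f$ up to the endpoints) and sum, with the triangle inequality, to get $|f(x)-f(y)| \leq M\sum \|x_{i+1}-x_i\| = M\|x-y\|$ since the pieces are collinear. This avoids any perturbation argument entirely. The semialgebraicity hypothesis is not really needed for this argument — it only guarantees niceness that makes "finitely many points" a natural hypothesis — though one could invoke it to ensure $f$ is differentiable off a lower-dimensional set to begin with.

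The only mild obstacle is the bookkeeping in justifying the one-variable mean value inequality on a half-open subsegment: one needs $f$ continuous on the closed subsegment $[p,q]$, differentiable on $(p,q)$, with derivative bounded, and then $|f(p)-f(q)| \le M\|p-q\|$ follows from the standard one-variable mean value theorem applied to $t\mapsto f((1-t)p+tq)$ on $(0,1)$ together with continuity at the endpoints; here the chain rule for the composition of $f$ with an affine map is valid precisely at points where $Df$ exists, which is all of the open subsegment by construction. I expect this to be entirely routine, so the lemma should follow with no real difficulty.
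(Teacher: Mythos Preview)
Your proposal is correct, and your second approach --- breaking the segment $[x,y]$ at the finitely many bad points and applying the one-variable mean value inequality on each subsegment --- is essentially the paper's own proof: the paper first isolates the $n=1$ case (using integration rather than the mean value theorem, a cosmetic difference), then reduces the general case to it by restricting $f$ to the segment. Your observation that semialgebraicity is not really needed is also accurate; the paper invokes it when restricting to the segment but the argument goes through without it.
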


\begin{proof}
We first prove the statement for $n = 1$. Without loss of generality we can assume that $U =(a,b)$ is an open interval. Then, $Df(x)$ is $C^1$ everywhere except finitely many points $a_1< \ldots < a_k$. 

Let $x_1, x_2 \in (a, b)$. Assume that $x_1 \leq a_i < a_{i+1}< \ldots< a_l\leq x_2$.  We have
\begin{align*}
|f(x_1) - f(x_2)|& \leq |\int_{x_1}^{a_i} Df(x) dx  +  \int_{a_{i+1}}^{a_{i+2}} Df(x)  +\ldots \int_{a_l}^{x_2} Df(x)dx|\\
&\leq M |\int_{x_1}^{x_2} dx | \leq M|x_1-x_2|.
\end{align*}
This implies that $f$ is Lipschitz.

For $n > 1$ fix $x , y \in U$. Set $v = \frac{y-x}{\|x -y\|}$. Define $g :U\supset [0,\|x-y\|] \to \bb R$ by
$$g(t) = f\left(x + tv\right).$$
Notice that $g$ is a semialgebraic function and $|Dg(t)|= |D_vf(x+tv)|$ whenever $D_v(f, x+tv$ exists, where $D_vf$ is the directional derivative of $f$ in the direction $v$. Since $\|v\| =1$, $|D_vf (.)| \leq \|Df(.)\|$. From the hypothesis that $\|Df(.)\| \leq M$ is bounded except finitely many points, so is $|Dg(t)|$.  Then by the  arguments as in the case $n=1$, $g$ is a Lipschitz function with the Lipschitz constant bounded by $M$. Therefore,
\begin{align*}
|g(0) - g(\|x - y \|)| & \leq M \|x - y\|\\
|f(x) - f(y)| & \leq M\|x-y\|.
\end{align*} 
This implies that $f$ is Lipschitz.
\end{proof} 

The Thom--Levine criterion for bi-Lipschitz triviality is as follows:
\begin{thm}\label{thm_ThomLevine}
Let $\bb K = \bb C$ or $\bb R$ and $F : (\bb K^n\times \bb K, 0) \to (\bb K,0)$ be a one-parameter deformation of a germ $f : (\bb K^n,0) \to (\bb K,0)$. If there exists a germ of a continuous vector field of the form 
\begin{equation*}
X(x,t) = \frac{\partial}{\partial t} + \sum_{i=1}^n X_i(x,t) \frac{\partial}{\partial x_i}  
\end{equation*}
Lipschitz in $x$, (i.e. there exists a number $C >0$ with
$$ \|X(x_1, t) - X(x_2, t)\|\leq C \|x_1 - x_2||$$ for all $t$), such that $X.F = 0$, then $F$ is a bi-Lipschitz trivial deformation of $f$. 
\end{thm}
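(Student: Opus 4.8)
The plan is to realize the trivialization as the flow of the vector field $X$ and then to verify that this flow is bi-Lipschitz in $x$ by appealing to Lemma \ref{lem_bounded_derivative}. First I would integrate $X$: since $X(x,t) = \partial/\partial t + \sum_i X_i(x,t)\partial/\partial x_i$ with $X$ continuous and Lipschitz in $x$ uniformly in $t$, the Picard--Lindelöf theorem gives, for each starting point $(x_0,0)$, a unique integral curve $t\mapsto (\,\phi_t(x_0),\,t\,)$ with $\phi_0 = \mathrm{id}$, defined on a common time interval (shrinking the neighbourhood of $0$ if needed so that everything stays inside $U$ and near $0$). Define $\Phi(x,t) = (\phi_t(x),t)$. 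The relation $X.F = 0$ reads $\frac{\partial F}{\partial t} + \sum_i X_i \frac{\partial F}{\partial x_i} = 0$, which is exactly $\frac{d}{dt}\big(F(\phi_t(x),t)\big) = 0$; hence $F(\phi_t(x),t) = F(x,0) = f(x)$ for all $t$, i.e. $f_t\circ \phi_t = f$, so that $\phi_t$ is a family of homeomorphisms trivializing the deformation.

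Next I would establish the bi-Lipschitz estimates for $\phi_t$. The standard Grönwall argument applied to two integral curves starting at $x_0$ and $x_1$ gives
\begin{equation*}
e^{-Ct}\|x_0 - x_1\| \le \|\phi_t(x_0) - \phi_t(x_1)\| \le e^{Ct}\|x_0 - x_1\|,
\end{equation*}
where $C$ is the Lipschitz constant of $X$ in $x$; since $t$ ranges over a bounded interval, this yields a uniform bi-Lipschitz constant $L = e^{C|t_0|}$ for every $\phi_t$. One must also check that $\phi_t(0) = 0$: this holds provided $X_i(0,t) = 0$ for all $t$, which follows from $F(0,t)=0$ being a critical value together with the form of the construction (or one incorporates it into the hypotheses on $X$ as is standard in the Thom--Levine setup). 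Combining, each $\phi_t : (\bb K^n,0)\to(\bb K^n,0)$ is a germ of a bi-Lipschitz homeomorphism with $f_t\circ\phi_t = f$, which is precisely the statement that $F$ is a bi-Lipschitz trivial deformation.

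An alternative, more self-contained route that avoids invoking Grönwall for non-smooth vector fields is to differentiate: where $X$ is differentiable in $x$, the variational equation shows $D_x\phi_t$ satisfies a linear ODE whose coefficient matrix is bounded by $C$ in norm, so $\|D_x\phi_t\| \le e^{Ct}$; since $\phi_t$ is semialgebraic (or subanalytic) in the situations of interest and the exceptional set where $X$ fails to be differentiable is small, Lemma \ref{lem_bounded_derivative} applies directly to each component of $\phi_t$ and of $\phi_t^{-1}$ to give the Lipschitz bounds. The main obstacle is the low regularity of $X$: $X$ is only assumed continuous and Lipschitz in $x$, not $C^1$, so existence is fine but uniqueness of integral curves and the differentiability arguments require care — one genuinely needs the Lipschitz-in-$x$ hypothesis (not mere continuity) to get a well-defined flow, and the bi-Lipschitz bounds on $\phi_t^{-1}$ come from running the flow backwards, which is legitimate precisely because the backward vector field $-X$ is again Lipschitz in $x$ with the same constant.
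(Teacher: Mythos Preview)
Your approach is essentially the same as the paper's: invoke Picard--Lindel\"of to produce the flow of $X$ and then apply Gr\"onwall's inequality to obtain the two-sided Lipschitz estimates on $\phi_t$, hence bi-Lipschitz triviality. The paper's proof is actually just these two sentences, so your write-up is a more detailed version of exactly the same argument (your alternative route via Lemma~\ref{lem_bounded_derivative} is not used in the paper).
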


\begin{proof}
The existence of the flow of the vector field $X(x, t)$, denoted $\Phi(x, t)$, follows from the classical Picard--Lindelof theorem. Applying the Gronwall lemma, we can show that $\Phi_t: \bb K^n, 0 \to \bb K^n, 0$ is a bi-Lipschitz homeomorphism.  Therefore, this flow induces the bi-Lipschitz triviality  of $F$. 
\end{proof}

It seems to be unknown whether the converse of the above statement holds. For this reason we call the deformation $F$ in Theorem \ref{thm_ThomLevine} strongly bi-Lipschitz trivial. Fernandes and Ruas \cite{fr} gave sufficient conditions for a deformation of a quasihomogeneous polynomial (in the real case) to be strongly bi-Lipschitz trivial. In this section we first establish a similar result for the complex case. 

From now on we always work with complex function germs. 
\subsection{Bi-Lipschitz determinacy} Let $A \subset \bb Q^n_{\geq 0}$ be a finite set of points. For $r \in \bb Q_+$, denote by $rA = \{r\alpha: \alpha \in A\}$. The Newton polyhedron $\Gamma_+(A)$ of $A$ is defined as the convex hull of the set $\bigcup_{\alpha\in  A} (\alpha + \bb R^n_{\geq 0})$. The union of compact faces of $\Gamma_+(A)$, denoted $\Gamma(A)$, is called the \textit{Newton diagram} of $A$. We say that $A$ is a  \emph{Newton set} if the intersection of  $\Gamma_+(A)$ with each coordinate axis is non-empty and $A$ coincides with the set of vertices of $\Gamma_+(A)$.

Given a Newton set $A$, the \textit{control function associated with $A$} is defined as follows
$$ \rho_A(x) = \sum_{ \alpha = (\alpha_1, \ldots, \alpha_n)\in A}|x_1|^{\alpha_1}\ldots |x_n|^{\alpha_n}.$$

We denote by $\bar{x} =(\bar{x}_1, \ldots, \bar{x}_n)$ the complex conjugate of $x$. Consider a polynomial $f$ in the variables $(x, \bar{x})$ given by 
$$f(x, \bar{x}) = \sum_{\nu, \mu} c_{\nu, \mu} x^\nu \bar{x}^\mu$$
where $x^\nu = x_1^{\nu_1}\ldots x_n^{\nu_n}$ for $\nu = (\nu_1, \ldots, \nu_n) \in \bb N^n$ and $\bar{x}^\mu = \bar{x}_1^{\mu_1}\ldots \bar{x}_n^{\mu_n}$ for $\mu = (\mu_1, \ldots, \mu_n) \in \bb N^n$.  Such a polynomial is called a \textit{mixed polynomial of $x$} (see also \cite{Oka}). The support of $f$ is 
$\supp (f) = \{(\nu_1 +\mu_1,\ldots, \nu_n + \mu_n): c_{\nu, \mu} \neq 0\}.$ The Newton polyhedron and the Newton diagram of $\supp(f)$, denoted $\Gamma(f)$ and $\Gamma_+(f)$ respectively, are called the Newton polyhedron and Newton diagram of $f$.

For $w = (w_1,\ldots,w_n) \in \bb Q_+^n$, the filtration (with respect to $w$) of a mixed monomial $M = x^\nu \bar{x}^\mu$ is defined by 
$$ fil_w(M) = \sum_{j = 1}^n w_j (\nu_j + \mu_j).$$
The filtration of a mixed polynomial $f$, denoted $fil_w(f)$ (or $fil(f)$ if $w$ is clear from the context), is the minimum of the filtrations of the mixed  monomials appearing in $f$.

Let $w = (w_1,\ldots,w_n) \in \bb Q_+^n$ and $d \in \bb Q_+$.  A polynomial $h(z)$ is called quasihomogeneous of the type $(w_1, \ldots, w_n; d)$ if  
$$ h(\lambda^{w_1}x_1, \ldots, \lambda^{w_n} x_n) = \lambda^d h(x), \forall \lambda \in \bb C^{*}.$$ 
A polynomial $g = \sum_{\alpha} a_\alpha  x^\alpha$  can always be written as 
$$ g = g_d + g_{d+1} + \ldots$$
where $g_j = \sum_{fil_w (x^\alpha) = j} a_\alpha x^\alpha$. We call $g_d$ the \textit{initial part} (w.r.t. the weight $w$) of $g$.

A mixed polynomial $f$ is called quasihomogeneous of the type $(w_1, \ldots, w_n; d)$ if 
$$ f(\lambda^{w_1}x_1, \ldots, \lambda^{w_n} x_n, \bar{\lambda}^{w_1}\bar{x}_1, \ldots, \bar{\lambda}^{w_n} \bar{x}_n) = \lambda^d f(x, \bar{x}), \forall \lambda \in \bb C^{*}.$$ 

Suppose that $f(x, \bar{x}) = \sum_{\nu, \mu} c_{\nu, \mu} x^\nu \bar{x}^\mu$ and $\supp(f) \subset \Gamma_+(A)$ for some Newton set $A$. We say that $0$ is a \textit{$\Gamma_+(A)$-isolated} point of $f$ if for all compact faces $\sigma$ of $\Gamma_+(A)$,  $ f (x)|_\sigma = 0$ has no solution in $(\bb C^*)^n$ where $f(x)|_\sigma = \sum_{\nu+\mu \in \sigma} c_{\nu, \mu}x^\nu \bar{x}^\mu$ is the restriction of $f$ to the compact face $\sigma$.

From now on we assume that  $A$ is a Newton set.
\begin{lem}\label{lem_control1} Let $f$ be a mixed polynomial with $\supp(f) \subset \Gamma_+(A)$. Then, there exists a constant $C >0$ such that in a neighbourhood of the origin 
$$ \rho_A(x)\geq C|f(x)| .$$
In particular, if $supp(f)$ lies entirely in the interior of $\Gamma_+(A)$ then 
$$\lim_{x\to 0}\frac{|f(x)|}{\rho_A(x)} =0.$$ 
\end{lem}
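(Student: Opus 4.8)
The plan is to work monomial by monomial. Write $f(x,\bar x) = \sum_{\nu,\mu} c_{\nu,\mu} x^\nu \bar x^\mu$; by the triangle inequality it suffices to bound $|x^\nu \bar x^\mu|$ by a constant multiple of $\rho_A(x)$ for each individual term, since a finite sum of such bounds gives the claim with $C^{-1} = \sum_{\nu,\mu}|c_{\nu,\mu}| \cdot (\text{constant for that term})$. Now $|x^\nu \bar x^\mu| = |x_1|^{\nu_1+\mu_1}\cdots|x_n|^{\nu_n+\mu_n} = |x|^\beta$ (in multi-index notation with $|x|^\beta := \prod_j |x_j|^{\beta_j}$), where $\beta = \nu+\mu \in \supp(f) \subset \Gamma_+(A)$. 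So the whole lemma reduces to the purely combinatorial statement: if $\beta \in \Gamma_+(A)$, then on a neighbourhood of $0$ one has $|x|^\beta \lesssim \rho_A(x) = \sum_{\alpha \in A}|x|^\alpha$.

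To prove this, recall that $\Gamma_+(A) = \mathrm{conv}\big(\bigcup_{\alpha\in A}(\alpha+\bb R^n_{\geq 0})\big)$, so any $\beta \in \Gamma_+(A)$ can be written $\beta = \sum_{\alpha\in A}\lambda_\alpha \alpha + \eta$ with $\lambda_\alpha \geq 0$, $\sum_\alpha \lambda_\alpha = 1$, and $\eta \in \bb R^n_{\geq 0}$. Near the origin $|x_j| \leq 1$, so $|x|^\eta \leq 1$, giving $|x|^\beta \leq |x|^{\sum_\alpha \lambda_\alpha \alpha} = \prod_{\alpha\in A}\big(|x|^\alpha\big)^{\lambda_\alpha}$. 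By the weighted AM--GM inequality, since the $\lambda_\alpha$ are nonnegative and sum to $1$,
$$
\prod_{\alpha\in A}\big(|x|^\alpha\big)^{\lambda_\alpha} \leq \sum_{\alpha\in A}\lambda_\alpha |x|^\alpha \leq \sum_{\alpha\in A}|x|^\alpha = \rho_A(x),
$$
which is exactly the desired bound with constant $1$ (on the unit polydisc). This handles the first assertion.

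For the second assertion, suppose $\supp(f)$ lies in the interior of $\Gamma_+(A)$. Then for each $\beta \in \supp(f)$ we may take $\eta$ in the decomposition above to have all coordinates strictly positive (an interior point is a strict convex combination, perturbed inward), so $|x|^\eta \to 0$ as $x \to 0$; consequently $|x|^\beta / \rho_A(x) \leq |x|^\eta \to 0$. Summing the finitely many terms gives $|f(x)|/\rho_A(x) \to 0$. The only mild subtlety — which I would address by a short argument — is that an interior lattice point of $\Gamma_+(A)$ need not a priori be writable with $\eta$ strictly positive if it lies "directly above" a compact face in some coordinate direction; but by taking a slightly smaller convex combination weight on the $\alpha$'s that do not span the relevant face, or equivalently by using that $\beta - \epsilon \mathbf{1} \in \Gamma_+(A)$ for small $\epsilon>0$ when $\beta$ is interior, one produces the needed $\eta$ with $\eta \geq \epsilon\mathbf{1}$. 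This bookkeeping about interior points is the main (and only) real obstacle; the rest is the AM--GM estimate above.
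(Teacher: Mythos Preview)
Your proof is correct and, for the first assertion, genuinely different from the paper's. Both arguments begin by reducing to a single monomial $|x|^\beta$ with $\beta\in\Gamma_+(A)$. From there the paper argues by contradiction: it considers the semialgebraic set where the desired inequality fails, applies the curve selection lemma to obtain an arc with leading exponents $a=(a_1,\ldots,a_n)$, and reaches a contradiction with the linear-programming fact that the linear functional $\langle a,\cdot\rangle$ restricted to $\Gamma_+(A)$ attains its minimum at a vertex of $A$. Your route is direct and elementary: write $\beta=\sum_\alpha\lambda_\alpha\alpha+\eta$ with $\eta\ge 0$ and apply weighted AM--GM to get $|x|^\beta\le|x|^\eta\,\rho_A(x)\le\rho_A(x)$ on the unit polydisc. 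This avoids curve selection entirely and even yields the explicit constant $C=1$ per monomial; the paper's argument, by contrast, is non-constructive but is more in the spirit of the later lemmas (e.g.\ Lemma~\ref{lem_control2}), which do use curve selection in an essential way.

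For the second assertion the two proofs are essentially the same: the paper takes $\delta\in\bb Q_+^n$ with $\beta-\delta\in\Gamma_+(A)$ and bounds $|x|^\beta/\rho_A(x)\le C|x|^\delta\to 0$, which is exactly your $\beta-\epsilon\mathbf{1}\in\Gamma_+(A)$ observation. Your hesitation about whether an interior point admits such a $\delta$ is unnecessary: since $\Gamma_+(A)$ is full-dimensional, an interior point has an entire Euclidean ball around it inside $\Gamma_+(A)$, so $\beta-\epsilon\mathbf{1}$ lies in $\Gamma_+(A)$ for all small $\epsilon>0$, and the ``bookkeeping'' you flag as the main obstacle is in fact immediate.
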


\begin{proof} For simplicity we may assume $f$ is a mixed monomial $x^\nu \bar{x}^\mu$. Put $u = \nu +\mu$. 
We have $u \in \Gamma_+(A)$ and  $|x^\nu \bar{x}^\mu| = |x^u| =|x_1|^{u_1}\ldots |x_n|^{u_n}$. 

Now assume on the contrary  that the germ at $(0,0)$ of the set
$$ X = \{(x, c) \in \bb C^n \times \bb R_{+} :\rho(x) < c |x^u|\}$$
is non-empty. Notice that $X$ is a semialgebraic germ of dimension $ \geq 1$. Since $(0,0) \in \overline{X}$, there exists a real analytic curve $\gamma(t) = (\gamma_1(t),\ldots, \gamma_n(t), \gamma_{n+1}(t)):[0, \varepsilon) \to X$, with $\gamma(0) = 0$, 
$|\gamma_1(t)| \sim t^{a_1}, \ldots,|\gamma_n(t)| \sim t^{a_n}, |\gamma_{n+1}(t)| \sim t^{b}$
and $$|\rho_A(\tilde{\gamma}(t))| \lesssim t^b |\tilde{\gamma}(t)^u|,$$ where $\tilde{\gamma}(t):=(\gamma_1(t), \ldots, \gamma_n(t))$.
This implies that 
$$\sum_{\alpha \in A} t^{a_1\alpha_1}\ldots t^{\alpha_n\alpha_n} \lesssim t^b t^{a_1u_1}\ldots t^{a_n u_n}.$$
For $t$ small enough, we have 
$$ \sum_{\alpha \in A} t^{\langle a, \alpha\rangle} <  t^{\langle a, u\rangle}.$$
Hence, $\langle a, u\rangle < \inf_{\alpha \in A} \langle a, \alpha\rangle$. Since $\langle a, u\rangle $ is a linear function on $\Gamma_{+}(A)$, it attains a minimum at one of the vertices of $\Gamma_+(A)$. So $\langle a, u\rangle = \inf_{\alpha \in A} \langle a, \alpha\rangle.$ This gives a contradiction.

If $u$ lies in the interior of $\Gamma_{+}(A)$, then there is a $\delta = (\delta_1, \ldots, \delta_n) \in \bb Q_+^n$ such that $u - \delta$ still lies in $\Gamma_{+}(A)$.  This implies, 
$$\frac{|x^u|}{\rho_A(x)} = \frac{|x^{u-\delta}||x^\delta|}{\rho_A(x)} \leq C |x^\delta|$$
which tends to $0$ when $x$ tends to $0$.
\end{proof}

\begin{lem}\label{lem_control2}
Let $f$ be a mixed polynomial such that $0$ is a $\Gamma_+(A)$-isolated point of $f$. Then, there is a $C>0$ such that in a neighbourhood of the origin
$$ |f(x)| \geq C \rho_A(x).$$
\end{lem}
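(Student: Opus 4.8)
The plan is to argue by contradiction using the curve selection lemma, exactly mirroring the proof of Lemma \ref{lem_control1} but now exploiting the $\Gamma_+(A)$-isolated hypothesis. Suppose no such constant $C$ exists; then the semialgebraic germ at the origin of
$$ X = \{(x,c) \in \bb C^n \times \bb R_+ : |f(x)| < c\,\rho_A(x)\} $$
is non-empty of dimension $\geq 1$, so there is a real analytic arc $\gamma(t) = (\gamma_1(t),\ldots,\gamma_n(t),\gamma_{n+1}(t)):[0,\varepsilon)\to X$ with $\gamma(0)=0$, $|\gamma_j(t)|\sim t^{a_j}$ for $j=1,\ldots,n$ (allowing $a_j=\infty$ when $\gamma_j\equiv 0$) and $|\gamma_{n+1}(t)|\sim t^b$. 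Writing $a=(a_1,\ldots,a_n)$, the inequality on the arc says $|f(\tilde\gamma(t))| \lesssim t^b\,\rho_A(\tilde\gamma(t)) \ll \rho_A(\tilde\gamma(t))$, where $\tilde\gamma=(\gamma_1,\ldots,\gamma_n)$.

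The next step is to identify the dominant terms. As in Lemma \ref{lem_control1}, $\rho_A(\tilde\gamma(t)) \sim t^{m}$ where $m = \min_{\alpha\in A}\langle a,\alpha\rangle = \inf_{u\in\Gamma_+(A)}\langle a,u\rangle$, the minimum being attained precisely on a compact face $\sigma$ of $\Gamma_+(A)$ (the linear functional $\langle a,\cdot\rangle$ attains its minimum over $\Gamma_+(A)$ on a compact face because all $a_j>0$ forces the infimum to be finite and achieved away from the recession directions; if some $a_j=\infty$ one restricts to the coordinate subspace where $\gamma$ lives and uses that $\Gamma_+(A)$ meets every axis). For the numerator, each monomial $c_{\nu,\mu}x^\nu\bar x^\mu$ of $f$ with $\nu+\mu=u$ contributes a term of order $t^{\langle a,u\rangle}\geq t^m$ to $f(\tilde\gamma(t))$, with equality only for $u\in\sigma$. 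Hence the lowest-order part of $f(\tilde\gamma(t))$ is $t^m\bigl(f|_\sigma(\tilde\gamma(t)/\text{(leading coefficients)}) + o(1)\bigr)$; more precisely, factoring out the leading behaviour of each $\gamma_j$, write $\gamma_j(t) = t^{a_j}(\theta_j + o(1))$ with $\theta_j\in\bb C^*$, so that $t^{-m}f(\tilde\gamma(t)) \to f|_\sigma(\theta)$ where $\theta=(\theta_1,\ldots,\theta_n)\in(\bb C^*)^n$. Since $|f(\tilde\gamma(t))| \ll \rho_A(\tilde\gamma(t)) \sim t^m$, we get $f|_\sigma(\theta)=0$ with $\theta\in(\bb C^*)^n$, contradicting the assumption that $0$ is a $\Gamma_+(A)$-isolated point of $f$.

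The main obstacle is making the numerator analysis fully rigorous, particularly the claim that $t^{-m}f(\tilde\gamma(t))$ converges to $f|_\sigma(\theta)$ rather than merely having $f|_\sigma(\theta)$ as a ``leading coefficient'': one must check that no cancellation among the order-$t^m$ terms is artificially created by the parametrization, and handle the mixed monomials $x^\nu\bar x^\mu$ correctly, noting $|\gamma_j(t)^{\nu_j}\bar\gamma_j(t)^{\mu_j}| = |\gamma_j(t)|^{\nu_j+\mu_j}\sim t^{a_j(\nu_j+\mu_j)}$ so only the total exponent $u=\nu+\mu$ enters the filtration, and $f|_\sigma$ as defined in the excerpt is exactly $\sum_{\nu+\mu\in\sigma}c_{\nu,\mu}x^\nu\bar x^\mu$. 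A clean way to avoid cancellation subtleties is to use a \emph{generic} analytic arc in $X$ (possible since $X$ has dimension $\geq 1$ and the bad locus where $f|_\sigma$ degenerates is a proper subvariety), or simply to observe that if $t^{-m}f(\tilde\gamma(t))\to 0$ we may pass to $f|_\sigma$ directly since the omitted higher-order terms are $o(t^m)$. One should also dispose of the trivial case where $\tilde\gamma\equiv 0$ (then $f(\tilde\gamma(t))=\rho_A(\tilde\gamma(t))=0$, no contradiction needed as the inequality is vacuous), and the case where the arc lies in a coordinate hyperplane $\{x_j=0\}$, which is handled by restricting to the corresponding face of $\Gamma_+(A)$ and using that $A$ is a Newton set so this face is still compact and nonempty.
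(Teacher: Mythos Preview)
Your proposal is correct and follows essentially the same route as the paper's proof: argue by contradiction, take a real analytic arc to the origin along which $|f|/\rho_A\to 0$, identify the compact face $\sigma$ of $\Gamma_+(A)$ on which the linear functional $\langle a,\cdot\rangle$ attains its minimum $m$, and conclude that $f|_\sigma$ vanishes at the leading-coefficient point $\theta\in(\bb C^*)^n$, contradicting the $\Gamma_+(A)$-isolated hypothesis. The paper writes the arc in polar form $\gamma_j(t)=(b_jt^{\beta_j}+o(t^{\beta_j}))e^{i\theta_j(t)}$ rather than your $\gamma_j(t)=t^{a_j}(\theta_j+o(1))$, and it begins directly with an arc on which the ratio tends to zero rather than via your auxiliary set $X$ with the extra parameter $c$, but these are only cosmetic differences.

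One remark: the ``cancellation'' obstacle you flag is not a genuine issue. Since $t$ is real, $\bar\gamma_j(t)=t^{a_j}(\bar\theta_j+o(1))$, so each mixed monomial contributes $c_{\nu,\mu}\,t^{\langle a,\nu+\mu\rangle}\theta^\nu\bar\theta^\mu+o(t^{\langle a,\nu+\mu\rangle})$; summing gives $f(\tilde\gamma(t))=t^m f|_\sigma(\theta)+o(t^m)$ on the nose, with no hidden interaction between terms. Likewise, the coordinate-hyperplane case needs no separate treatment: if some $\gamma_j\equiv 0$ then the relevant face $\sigma$ lies in $\{u_j=0\}$, and since $f|_\sigma$ does not involve $x_j$, a zero in $(\bb C^*)^{n-1}$ extends to one in $(\bb C^*)^n$, still contradicting the hypothesis.
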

\begin{proof} Suppose that $f(x) = \sum_{\nu, \mu}c_{\nu, \mu} x^\nu \bar{x}^\mu$. Assume on the contrary that there exists a real analytic curve $\gamma: [0, \varepsilon) \to \bb C^n$, $\gamma(0) =0$ such that $\lim_{t\to 0}\frac{|f(\gamma(t))|}{\rho_A(\gamma(t))} =0$. Write $\gamma = (\gamma_1, \ldots, \gamma_n)$ where 
$\gamma_j(t) = (b_j t^{\beta_j} + o(t^{\beta_j})) e^{i\theta_j(t)}$, $j=1, \ldots, n$. Set $b = (b_1, \ldots, b_n)$, $\beta = (\beta_1, \ldots, \beta_n)$ and $\theta = (\theta_1, \ldots, \theta_n)$. 

Consider the linear function $h(\alpha)=\langle\alpha, \beta\rangle$ defined on $\Gamma_+(A)$. Then, $h(\alpha)$ attains its minimum value $m$ at a compact face, say $\sigma$, of $\Gamma_+(A)$. We can write
$$\rho_A(\gamma(t)) = \sum_{\alpha \in \sigma} t^{\langle \alpha, \beta\rangle} + o(t^m),$$
and 
$$f(\gamma(t)) = \sum_{\nu +\mu \in \supp(f) \cap \sigma} c_{\nu, \mu} b^{\nu +\mu} t^{\langle\alpha, \beta\rangle} e^{i\langle\theta, \nu  - \mu \rangle)} + o(t^m).$$ 

Since $\lim_{t\to 0}\frac{|f(\gamma(t))|}{\rho_A(t)} =0$,  $\sum_{\nu +\mu \in supp(f) \cap \sigma} c_{\nu, \mu} b^{\nu +\mu} t^{\langle\alpha, \beta\rangle} e^{i\langle\theta, \nu  - \mu \rangle)} =0$. This implies that 
$\gamma^*(t):=(b_1t^{\beta_1} e^{i\theta_1}, \ldots, b_n t^{\beta_n} e^{i\theta_n})$ is a solution of $f|_\sigma(x)$. Since $b_j >0$ for all $j = 1, \ldots, n$, $\gamma^*(t)$ is contained in $\bb (\bb C^*)^n$. This contradicts the hypothesis that $0$ is a $\Gamma_+(A)$-isolated point of $f$. 
\end{proof}

Denote by $K_A$ the set of $(n-1)$-dimensional compact faces of $\Gamma_+(A)$. Let $\sigma \in K_A$. Then, there is $d \in \bb Q_+$ such that $\sigma$ belongs to some plane $\Delta_\sigma$ given by  $w_{\sigma,1}x_1 + \ldots + w_{\sigma, n}x_n =d$, where $w_{\sigma, i} \in \bb Q_{\geq 0}$.  We call $w_\sigma=(w_{\sigma, 1}, \ldots, w_{\sigma, n})$ the weight corresponding to $\sigma$ with total weight $d$. Notice that $w_{\sigma, i}$ and $d$ can be chosen to be integers. The following result is a consequence of Lemma \ref{lem_control1}.

\begin{lem}\label{lem_control3}
Let $B = rA$, where $r \in \bb Q_+$.  If $\min_{\sigma\in K_A} \{fil_{w_\sigma}(f) \}\geq rd$ then $$|f(x)|\lesssim \rho_{B}(x).$$
In particular, if $\min_{\sigma \in K_A} \{fil_{w_\sigma}\} > rd$ then $$\lim_{x\to 0}\frac{|f(x)|}{\rho_{B}(x)} =0.$$
\end{lem}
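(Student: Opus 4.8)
The plan is to reduce everything to Lemma \ref{lem_control1} by passing to the scaled Newton set $B = rA$ and matching filtrations. First I would observe that, since $B = rA$, we have $\Gamma_+(B) = r\,\Gamma_+(A)$, and for each $\sigma \in K_A$ the corresponding $(n-1)$-face $r\sigma$ of $\Gamma_+(B)$ lies in the plane $w_{\sigma,1}x_1 + \cdots + w_{\sigma,n}x_n = rd$; in other words, the weight vector $w_\sigma$ attached to $\sigma$ is also the weight vector attached to $r\sigma$, but with total weight $rd$ instead of $d$. Hence the hypothesis $\min_{\sigma\in K_A}\{fil_{w_\sigma}(f)\}\ge rd$ says precisely that, for every facet $\tau$ of $\Gamma_+(B)$ with associated weight $w_\tau$ and total weight $e_\tau$, every monomial $x^\nu\bar x^\mu$ appearing in $f$ satisfies $\langle w_\tau,\nu+\mu\rangle \ge e_\tau$. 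Geometrically this means $\nu+\mu$ lies on the correct (closed) side of each supporting hyperplane of $\Gamma_+(B)$, hence $\nu+\mu \in \Gamma_+(B)$. Therefore $\supp(f)\subset \Gamma_+(B)$, and Lemma \ref{lem_control1} applied to the Newton set $B$ gives a constant $C>0$ with $\rho_B(x)\ge C|f(x)|$ near $0$, i.e. $|f(x)|\lesssim \rho_B(x)$.

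For the ``in particular'' statement, suppose $\min_{\sigma\in K_A}\{fil_{w_\sigma}(f)\} > rd$. Then for each facet $\tau$ of $\Gamma_+(B)$ and each monomial $x^\nu\bar x^\mu$ of $f$ we get the strict inequality $\langle w_\tau, \nu+\mu\rangle > e_\tau$, which means $\nu+\mu$ lies strictly on the interior side of every supporting hyperplane of $\Gamma_+(B)$; thus $\nu+\mu$ lies in the interior of $\Gamma_+(B)$. Consequently $\supp(f)$ lies entirely in the interior of $\Gamma_+(B)$, and the second assertion of Lemma \ref{lem_control1} yields $\lim_{x\to 0}|f(x)|/\rho_B(x) = 0$.

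The only point requiring a little care — and the main (minor) obstacle — is the elementary convex-geometry fact that a point of $\bb R^n_{\ge 0}$ which is weakly (resp. strictly) on the inner side of the supporting hyperplane of every facet of $K_B$ actually lies in $\Gamma_+(B)$ (resp. its interior). This is where one uses that $A$ (hence $B$) is a Newton set: because $\Gamma_+(B)$ meets every coordinate axis, its recession cone is exactly $\bb R^n_{\ge 0}$, so the only supporting hyperplanes with nonnegative normals relevant to points of $\bb R^n_{\ge 0}$ are those spanned by the $(n-1)$-faces in $K_B$ (together with the coordinate hyperplanes, which are automatically respected since $\nu+\mu\ge 0$). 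Hence the intersection of the corresponding closed half-spaces with $\bb R^n_{\ge 0}$ is precisely $\Gamma_+(B)$, and the strict version gives the interior. With this observation in hand the lemma follows immediately from Lemma \ref{lem_control1}, so I would not belabour the remaining bookkeeping.
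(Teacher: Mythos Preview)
Your approach is exactly the one the paper indicates (it simply records that the lemma ``is a consequence of Lemma~\ref{lem_control1}'' and gives no further detail), and your argument for the first assertion is correct: once $B$ is a Newton set, the non-compact facets of $\Gamma_+(B)$ lie in coordinate hyperplanes, so the compact-facet inequalities together with $\nu+\mu\ge 0$ do characterise $\Gamma_+(B)$, and Lemma~\ref{lem_control1} applies.

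There is, however, a small gap in the ``in particular'' clause. Your claim that the strict filtration hypothesis forces $\supp(f)$ into the \emph{interior} of $\Gamma_+(B)$ is not quite right: a monomial not involving all variables can sit on a non-compact face. For instance, with $n=2$, $A=\{(3,0),(1,1),(0,3)\}$ and $r=1$, the monomial $y^4$ satisfies $fil_{w_\sigma}(y^4)>d$ for both compact edges, yet $(0,4)$ lies on the boundary ray $\{0\}\times[3,\infty)$ of $\Gamma_+(A)$. The coordinate-hyperplane inequalities $\nu+\mu\ge 0$ are only satisfied weakly, so you do not get the topological interior, and Lemma~\ref{lem_control1} as stated does not apply. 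The fix is immediate: the proof of Lemma~\ref{lem_control1} only needs a nonzero $\delta\in\bb Q_{\ge 0}^n$ (not $\bb Q_{+}^n$) with $u-\delta\in\Gamma_+(B)$, since then $|x^\delta|\to 0$ as $x\to 0$; and your strict inequalities on all compact facets say exactly that $u\in\Gamma_+(B)\setminus\Gamma(B)$, which guarantees such a $\delta$. With that one-line adjustment your reduction goes through.
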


Let $(w_1, \ldots, w_n; 2k)$ be integers. We call
$$\rho_k (x) = |x_1|^{2\alpha_1} + \ldots + |x_n|^{2\alpha_n}$$ where $\alpha_i = k/w_i$  \textit{the standard control function} of type $(w_1, \ldots, w_n; 2k)$.

\begin{lem}\label{lem_control_quasi1} Let $f$ be a mixed polynomial and $\{f_t\}_{t \in U}$ be a smooth deformation of $f$. Suppose that $f_t$ is a quasihomogenous mixed polynomial of type $(w_1, \ldots, w_n; 2k)$ for every $t \in U$ and $f_t^{-1}(0)=\{0\}$. If $U$ is compact then there are $0<C_1 < C_2$ independent of $t$ such that 
$$ C_1 \rho_k(x) \leq |f_t(x)|\leq C_2 \rho_k(x).$$
\end{lem}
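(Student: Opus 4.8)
\textbf{Proof plan for Lemma \ref{lem_control_quasi1}.}

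The plan is to exploit the quasihomogeneity to reduce the global estimate on a neighbourhood of the origin to a uniform estimate on the compact ``unit sphere'' for the weighted action, and then use compactness in both the space variable and the parameter $t$. First I would introduce the weighted $\bb C^*$-action $\lambda \cdot x = (\lambda^{w_1}x_1, \ldots, \lambda^{w_n}x_n)$ and observe that both $|f_t(\lambda \cdot x)| = |\lambda|^{2k}|f_t(x)|$ and $\rho_k(\lambda \cdot x) = |\lambda|^{2k}\rho_k(x)$ scale the same way (this is exactly why $\alpha_i = k/w_i$ is the right choice of exponents: $fil_w(|x_i|^{2\alpha_i}) = 2w_i\alpha_i = 2k$). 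Hence the quotient $|f_t(x)|/\rho_k(x)$ is invariant under the action, so it suffices to bound it on a set that meets every orbit, for instance the compact set $S = \{x : \rho_k(x) = 1\}$ — note $S$ is compact because $\rho_k$ is a sum of even powers of the moduli, hence proper.

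Next I would show the ratio is well-defined and continuous on $S \times U$. The denominator $\rho_k(x)$ equals $1$ on $S$, so there is no division issue; the numerator $|f_t(x)|$ is continuous in $(x,t)$ because $\{f_t\}$ is a smooth deformation. The hypothesis $f_t^{-1}(0) = \{0\}$ guarantees $f_t(x) \neq 0$ for $x \in S$ (since $0 \notin S$), so $|f_t(x)| > 0$ everywhere on $S \times U$. By compactness of $S \times U$ the continuous positive function $(x,t)\mapsto |f_t(x)|$ attains a minimum $C_1 > 0$ and a maximum $C_2 < \infty$ on $S \times U$, and these constants do not depend on $t$. Then for an arbitrary $x \neq 0$ near $0$, write $x = \lambda \cdot x'$ with $x' \in S$ and $\lambda = \rho_k(x)^{1/2k}$ (choosing the real positive root), so that $|f_t(x)| = |\lambda|^{2k}|f_t(x')| = \rho_k(x)|f_t(x')|$, giving $C_1\rho_k(x) \leq |f_t(x)| \leq C_2\rho_k(x)$; the case $x = 0$ is trivial since both sides vanish.

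The one genuine subtlety — and the step I expect to need the most care — is the claim that every $x \neq 0$ can be written as $\lambda \cdot x'$ with $x' \in S$ and $\lambda$ a positive real, i.e. that the positive real part of the weighted action already sweeps out all of $\bb C^n \setminus \{0\}$ modulo $S$. This works precisely because $\rho_k$ depends only on the moduli $|x_i|$: setting $\lambda = \rho_k(x)^{1/(2k)} \in \bb R_{>0}$ and $x'_i = x_i/\lambda^{w_i}$ one computes $\rho_k(x') = \rho_k(x)/\lambda^{2k} = 1$, so $x' \in S$, and $\lambda \cdot x' = x$ by construction. Here one uses that $\lambda$ is real so that $\lambda^{w_i}$ makes sense unambiguously (the $w_i$ being positive integers this is harmless anyway) and that $\rho_k(x) > 0$ for $x \neq 0$, which holds since $\rho_k$ is a sum of powers of all the $|x_i|$. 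Once this normalization is in place the rest is a routine compactness argument, and one should remark that the estimate in fact holds on all of $\bb C^n$, not merely near $0$, though the statement as phrased only claims a neighbourhood of the origin.
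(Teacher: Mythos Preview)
Your argument is correct. The reduction to the weighted ``unit sphere'' $S=\{\rho_k=1\}$ works exactly as you describe: both $|f_t|$ and $\rho_k$ scale by $|\lambda|^{2k}$ under the action $x\mapsto(\lambda^{w_1}x_1,\ldots,\lambda^{w_n}x_n)$ (for real positive $\lambda$ this uses only that every monomial in $f_t$ has filtration $2k$), $S$ is compact because $\rho_k$ is proper, and $|f_t|$ is continuous and strictly positive on the compact set $S\times U$, so the bounds $C_1,C_2$ follow at once. Your check that every nonzero $x$ normalizes to $S$ via $\lambda=\rho_k(x)^{1/(2k)}>0$ is the right way to handle the one nontrivial point.

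This is, however, a genuinely different route from the paper's. The paper does not argue by scaling to a compact level set; instead it invokes the Newton-polyhedron estimates already established: Lemma~\ref{lem_control1} gives the upper bound $|f_t|\le C_2(t)\rho_A$ from $\supp(f_t)\subset\Gamma_+(A)$, and Lemma~\ref{lem_control2} gives the lower bound $|f_t|\ge C_1(t)\rho_A$ after observing that $f_t^{-1}(0)=\{0\}$ forces $0$ to be a $\Gamma_+(A)$-isolated point. Uniformity in $t$ is then obtained in a second step, by arguing that $C_1(t),C_2(t)$ can be chosen continuously and taking extrema over the compact $U$. Your approach is more self-contained and yields the uniformity in $t$ in a single stroke via compactness of $S\times U$; it also makes transparent that the inequality holds on all of $\bb C^n$, not just near $0$. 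The paper's approach, on the other hand, keeps everything within the Newton-polyhedron framework used throughout Section~\ref{section8}, which is what is needed for the non-quasihomogeneous situations treated in Lemmas~\ref{lem_control3} and~\ref{lem_control4}.
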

\begin{proof}
Let $A$ be the set of vertices of  $\Gamma_+(\rho_k)$. It is obvious that $\Gamma_+(A) = \Gamma_+(\rho_k)$ and $\rho_k (x) = \rho_A(x)$.  Since $\supp(f) \subset \Gamma_+(A)$, there is  $C_2(t) >0$ such that $|f_t| \leq C_2(t) \rho_A$ (see Lemma \ref{lem_control1}). Since $f_t^{-1}(0) =0$, $0$ is a $\Gamma_+(A)$-isolated point of $f_t$. Thus, there is exists $C_1(t)>0$ such that $|f_t| \geq C_1(t) \rho_A$ (see Lemma \ref{lem_control2}). By the continuity in $t$ of $f_t$, $C_1(t)$ and $C_2(t)$ can be chosen to be continuous in $t$.  Then, $C_1 = \min_{t\in U} C_1(t)$ and $ C_2 = \max_{t \in U} C_2(t)$ the desired inequality.  
\end{proof}

\begin{lem}\label{lem_control4} Let $\rho_k$ be the standard control function of the type $(w_1, \ldots, w_n; 2k)$ with $w_1 \leq \ldots\leq w_n$.  Let $U \subset \bb C$ be a compact set, and $\{f_t\}_{t \in U}$ be a continuous family of mixed polynomials. If 
$$fil(f_t) \geq 2k+w_n$$ for every $t\in U$,
then $\frac{f_t}{\rho_k}$ is a germ of a Lipschitz function at $0$ with the Lipschitz constant independent of $t$.
\end{lem}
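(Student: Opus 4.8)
The plan is to show that $f_t/\rho_k$ extends to a Lipschitz germ by controlling both the function and its first-order partial derivatives. First I would reduce to the case where $f_t$ is a single mixed monomial $c_t\, x^\nu \bar x^\mu$ with $\nu+\mu = u = (u_1,\dots,u_n)$ and $\mathrm{fil}_w(u)=\sum w_j u_j \geq 2k+w_n$; since $f_t$ has only finitely many monomials and the family is continuous on a compact set $U$, the general estimate follows by the triangle inequality with a uniform constant. For such a monomial, $|f_t(x)| = |c_t|\,|x_1|^{u_1}\cdots|x_n|^{u_n}$, so by Lemma \ref{lem_control1} (applied with $A$ the vertices of $\Gamma_+(\rho_k)$, noting $u$ lies in $\Gamma_+(A)$ because $\mathrm{fil}_w(u)\geq 2k \geq 2k$; in fact the filtration hypothesis forces it into the region above the diagram) we already get $|f_t|\lesssim \rho_k$, hence $f_t/\rho_k$ is bounded and extends continuously by $0$ at the origin.

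The substance is the derivative bound. I would compute, for a mixed monomial, the complex partial derivatives: writing $r_j = |x_j|$, one has $\partial_{x_j}(x^\nu\bar x^\mu)$ and $\partial_{\bar x_j}(x^\nu\bar x^\mu)$ each of modulus $\lesssim r_1^{u_1}\cdots r_j^{u_j-1}\cdots r_n^{u_n}$ (when $u_j\geq 1$), while $\partial_{x_j}\rho_k$ and $\partial_{\bar x_j}\rho_k$ are of size $\sim r_j^{2\alpha_j -1} = r_j^{2k/w_j -1}$ on the "dominant" axis. Applying the quotient rule, the task is to bound
\begin{equation*}
\left| \partial_{x_j}\!\left(\frac{f_t}{\rho_k}\right)\right| \;\lesssim\; \frac{|\partial_{x_j} f_t|}{\rho_k} + \frac{|f_t|\,|\partial_{x_j}\rho_k|}{\rho_k^2}
\end{equation*}
by a constant uniform in $t$ near the origin. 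Each of the two terms on the right is again (a ratio of) mixed polynomials, or can be estimated by such after clearing the $\rho_k^2$: concretely I would show $|\partial_{x_j}f_t|\cdot \rho_k \lesssim \rho_k^2 \cdot (\text{const})$ is the wrong bookkeeping — rather, $|\partial_{x_j}f_t| \lesssim \rho_k \cdot w_j / (\text{weight deficit})$ — the clean way is: the numerator of $\partial_{x_j}(f_t/\rho_k)$ after putting over common denominator $\rho_k^2$ is a mixed polynomial whose support, thanks to $\mathrm{fil}_w(f_t)\geq 2k+w_n \geq 2k + w_j$, lies in $\Gamma_+$ of the support of $\rho_k^2$; then Lemma \ref{lem_control1} gives the bound directly. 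The filtration slack $+w_n$ (and $w_n\geq w_j$ for all $j$) is exactly what compensates for the degree drop of one caused by differentiating, for every coordinate $j$ simultaneously.

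Once $\|D(f_t/\rho_k)\|$ is bounded by a $t$-independent $M$ on a punctured neighbourhood of $0$ (the origin being the only possible bad point, and $f_t/\rho_k$ being continuous and semialgebraic there), I would invoke Lemma \ref{lem_bounded_derivative} on a small convex ball to conclude $f_t/\rho_k$ is Lipschitz with constant $\leq M$, uniformly in $t\in U$. One technical point to handle: $f_t/\rho_k$ is a function of $x$ and $\bar x$, i.e. a real-analytic (hence semialgebraic) function on $\mathbb R^{2n}\setminus\{0\}$, so Lemma \ref{lem_bounded_derivative} applies after identifying $\mathbb C^n$ with $\mathbb R^{2n}$ and replacing the complex partials by the real gradient, whose norm is comparable.

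The main obstacle I anticipate is the bookkeeping in the derivative estimate: one must verify that after differentiating and clearing denominators, the resulting mixed polynomial's support genuinely sits inside $\Gamma_+(\mathrm{supp}\,\rho_k^2)$ for \emph{every} coordinate direction $j$, and this is where the precise hypothesis $\mathrm{fil}(f_t)\geq 2k+w_n$ with $w_n = \max_j w_j$ is used in an essential way — a weaker slack like $+w_1$ would fail for the direction corresponding to the largest weight. Getting the uniformity in $t$ is then automatic from continuity of the coefficients on the compact parameter set $U$, exactly as in Lemma \ref{lem_control_quasi1}.
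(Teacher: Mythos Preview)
Your proposal is correct and follows essentially the same route as the paper: extend $f_t/\rho_k$ by $0$ at the origin, use the quotient rule to write $\partial_{x_i}(f_t/\rho_k)$ as a mixed polynomial over $\rho_k^2$, observe that its filtration is at least $\mathrm{fil}(\rho_k)+\mathrm{fil}(f_t)-w_i \geq 4k$ (the slack $+w_n$ absorbing the worst drop), apply Lemma~\ref{lem_control3} to bound the derivatives, and finish with Lemma~\ref{lem_bounded_derivative}. The only cosmetic difference is your preliminary reduction to a single monomial, which the paper skips by working directly with filtrations; your remark that $w_n\geq w_j$ is precisely why the hypothesis needs the largest weight is the same observation the paper uses implicitly.
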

\begin{proof} Set $g_t(x) = \frac{f_t}{\rho_k}, \forall x\neq 0$ and $g_t(0) =0$. Note that $g$ is a mixed rational function. By Lemma \ref{lem_bounded_derivative}, it suffices to show that $g_t$ is continuous and has the partial derivatives bounded everywhere except $0$. Here, by the partial derivatives we mean the derivatives in variables $x_i$ and $\bar{x}_i$. 

Let $A$ be the set of vertices of $\Gamma_+(\rho_k)$. Then,  $\rho_k = \rho_A$ and $\rho_k^2 \sim \rho_{2A}$ (see Lemma \ref{lem_control3}). Since $\rho_k$ is  quasihomogeneous of the type $(w_1, \ldots, w_n; 2k)$, $\Gamma_+(A)$ has only one $(n-1)$-dimensional compact face. The weight this face is $w= (w_1, \ldots, w_n)$ and the total weight is $d=2k$. 

Since $fil(f_t) > 2k$, $\lim_{x \to 0} \frac{f_t}{\rho_k} =0$ (see Lemma \ref{lem_control1}), $g_t$ is a continuous function.

And, 
$$\frac{\partial g_t}{\partial x_i} = \frac{1}{\rho_k^2}(\rho_k\frac{\partial f_t}{\partial x_i} - f_t\frac{\partial \rho_k}{\partial x_i}) \sim  \frac{1}{\rho_{2A}}(\rho_k\frac{\partial f_t}{\partial x_i} - f_t\frac{\partial \rho_k}{\partial x_i}) .$$
It is clear that
$$fil(\rho_k\frac{\partial f_t}{\partial x_i} - f_t\frac{\partial \rho_k}{\partial x_i}) \geq fil(\rho_k) + fil(f_t) - w_n \geq 4k.$$
By Lemma \ref{lem_control3}, $\frac{\partial g_t}{\partial x_i}$ is bounded near the origin By the continuity of $\frac{\partial g_t}{\partial x_i}$ on the compact set $U$, the bound can be chosen such that it does not depend on $t$.  

The boundedness of 
$\frac{\partial g}{\partial \bar{x}_i }$ is proved in the same way. This completes the proof.
\end{proof}

\begin{thm}\label{thm_5.5}
Let $f : \bb C^n, 0 \to \bb C, 0$ be a germ of a quasihomogeneous polynomial of the type $(w_1,\ldots,w_n; d)$ with isolated singularity at $0$. Let $f_t(x)= f(x) + t \theta(x,t)$ be a smooth deformation of $f$ such that the initial part of $f_t$ w.r.t $(w_1, \ldots, w_n)$ has an isolated singularity at $0$ for every $t$. If $fil(\theta) \geq d + \max_{i, j} \{w_i -w_j\}$ then $f_t$ is  strongly bi-Lipschitz trivial over $U$.
\end{thm}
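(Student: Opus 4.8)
The plan is to verify the Thom--Levine criterion (Theorem \ref{thm_ThomLevine}) by explicitly constructing an integrable vector field $X = \partial/\partial t + \sum_i X_i(x,t)\partial/\partial x_i$ with $X.F = 0$, where $F(x,t) = f_t(x)$, and checking that each $X_i$ is Lipschitz in $x$ uniformly in $t$. The natural guess, following the quasihomogeneous framework of Fernandes--Ruas \cite{fr} and the control-function lemmas just established, is to look for $X_i$ of the form $X_i = -w_i x_i \cdot \dfrac{\dot f_t(x)}{\mathcal D f_t(x)}$, where $\dot f_t = \partial f_t/\partial t = \theta + t\,\partial\theta/\partial t$ and $\mathcal D f_t = \sum_j w_j x_j\,\partial f_t/\partial x_j$ is the (weighted) Euler operator applied to $f_t$. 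Since $f$ is quasihomogeneous of type $(w_1,\dots,w_n;d)$, the Euler relation gives $\mathcal D f = d\cdot f$, and more relevantly the \emph{initial part} $\mathrm{in}(f_t)$ satisfies the Euler relation, so $\mathcal D f_t = d\cdot \mathrm{in}(f_t) + (\text{higher filtration})$. With this choice $X.F = \dot f_t + \sum_i X_i\,\partial f_t/\partial x_i = \dot f_t - \dfrac{\dot f_t}{\mathcal D f_t}\sum_i w_i x_i\,\partial f_t/\partial x_i = 0$ identically, so the only real content is the Lipschitz estimate on the $X_i$.

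The key steps, in order, are: (1) Fix the standard control function $\rho = \rho_{k}$ (or rather $\rho_A$ for the Newton set $A$ of vertices of the diagram of $\mathrm{in}(f_t)$) adapted to the weight $w$; by hypothesis $\mathrm{in}(f_t)$ has an isolated singularity at $0$ for all $t$, so its partials generate a $\Gamma_+(A)$-isolated configuration and Lemma \ref{lem_control2} (applied to $\sum_j |x_j\,\partial_{x_j}\mathrm{in}(f_t)|$, or to a suitable mixed polynomial built from it) yields $|\mathcal D f_t(x)| \gtrsim \rho_{d}(x)$ near $0$, uniformly in $t\in U$ after invoking Lemma \ref{lem_control_quasi1}-type continuity. (2) Estimate the numerator: $fil(\theta) \geq d + \max_{i,j}\{w_i - w_j\}$ forces $fil(x_i\dot f_t) \geq w_i + d + \max_{i,j}\{w_i-w_j\}$, hence by Lemma \ref{lem_control3} the numerator $w_i x_i \dot f_t$ is controlled by an appropriate power of $\rho$ of filtration at least $d + (w_i - w_{\min}) + \dots$ beyond $\rho_d$. (3) Combine (1) and (2): $X_i = -w_i x_i \dot f_t / \mathcal D f_t$ has $|X_i(x,t)| \lesssim \rho(x)^{1+\varepsilon_i}$-type bound, and more importantly one differentiates the quotient $\partial X_i/\partial x_j$ and $\partial X_i/\partial \bar x_j$, uses the quotient rule exactly as in Lemma \ref{lem_control4}, and checks via filtration bookkeeping that $fil(\text{numerator of }\partial X_i/\partial x_j) \geq fil(\text{denominator} = \mathcal D f_t^2)$, so the partial derivatives of $X_i$ are bounded near $0$ away from $0$, uniformly in $t\in U$. (4) Apply Lemma \ref{lem_bounded_derivative} to conclude each $X_i(\cdot,t)$ is Lipschitz with constant independent of $t$, so $X$ is Lipschitz in $x$; then Theorem \ref{thm_ThomLevine} finishes the proof.

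The technical heart — and the step I expect to be the main obstacle — is the filtration arithmetic in step (3): one must show that the exact condition $fil(\theta)\geq d + \max_{i,j}\{w_i-w_j\}$ is precisely what makes $fil$ of the numerator of $\partial X_i/\partial x_j$ reach $fil(\mathcal D f_t^2) = 2d$ (up to the correction from differentiating, which costs at most $w_n$ or $w_{\max}$ on one factor). The asymmetry between the variables (the $\max_{i,j}\{w_i-w_j\}$ term) is exactly the slack needed so that multiplying by $x_i$ and dividing by $x_j$ in the quotient rule never loses more filtration than the hypothesis grants. A secondary subtlety is the uniformity in $t$: since $\mathrm{in}(f_t)$ varies with $t$ but has isolated singularity for every $t\in U$ with $U$ compact, one needs the continuity-and-compactness argument of Lemma \ref{lem_control_quasi1} to extract uniform constants $C_1 \leq |\mathcal D f_t|/\rho_d \leq C_2$; one should also note that $\dot f_t$ remains a continuous family of mixed polynomials so its control constants are likewise uniform. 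Once these two bookkeeping points are handled, the rest is a direct citation of the lemmas of this section.
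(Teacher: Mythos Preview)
Your proposal has a genuine gap in step~(1): the Euler operator $\mathcal D f_t = \sum_j w_j x_j\,\partial f_t/\partial x_j$ is \emph{not} bounded below by $\rho_d$. By the Euler relation its initial part equals $d\cdot \mathrm{in}(f_t)$, so $\mathcal D f_t$ vanishes (to leading order) along the entire hypersurface $\{\mathrm{in}(f_t)=0\}$, which is an $(n-1)$-dimensional cone through $0$, not just the origin. The hypothesis that $\mathrm{in}(f_t)$ has an isolated \emph{singularity} controls only the common zero locus of the partials $\partial_j f_t$, not the zero locus of $f_t$ itself. Concretely, for $f=x^3+y^3$ of type $(1,1;3)$ one has $\mathcal Df=3f$, which vanishes on three lines through $0$, so $|\mathcal Df|\gtrsim |x|^3+|y|^3$ fails. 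Consequently your $X_i=-w_ix_i\,\dot f_t/\mathcal Df_t$ is not even bounded across $V(f_t)$, let alone Lipschitz, and the argument collapses before the filtration bookkeeping of step~(3) begins. (Your parenthetical fallback $\sum_j|x_j\partial_{x_j}\mathrm{in}(f_t)|$ does not help either: for $f=x^2y+y^3$ this sum vanishes on $\{y=0\}$, and in any case it is not the denominator appearing in your identity $X.F=0$.)

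The paper repairs exactly this point by building a denominator that vanishes only on the singular locus. Set $s_i=d-w_i$, $k=\mathrm{lcm}(s_i)$, $\alpha_i=k/s_i$, and put
\[
N^*f_t \;=\; \sum_i \Big|\frac{\partial f_t}{\partial x_i}\Big|^{2\alpha_i}.
\]
This is a quasihomogeneous mixed polynomial of type $(w_1,\dots,w_n;2k)$ whose initial part vanishes only at $0$ (since $\mathrm{in}(f_t)$ has isolated singularity), so Lemma~\ref{lem_control_quasi1} gives $N^*f_t\sim\rho_k$ uniformly in $t$. The numerators
\[
W_i \;=\; \frac{\partial f_t}{\partial t}\,\Big(\overline{\frac{\partial f_t}{\partial x_i}}\Big)^{\alpha_i}\Big(\frac{\partial f_t}{\partial x_i}\Big)^{\alpha_i-1}
\]
satisfy $\sum_i W_i\,\partial_i f_t = \dot f_t\cdot N^*f_t$ by construction, and the filtration arithmetic (now against the single weight $w$ with total weight $2k$) yields $fil(W_i)\geq 2k+w_n$; this is exactly where the hypothesis $fil(\theta)\geq d+\max_{i,j}\{w_i-w_j\}$ is used. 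Lemma~\ref{lem_control4} then makes $W_i/N^*f_t$ Lipschitz. Your instinct about the filtration accounting in step~(3) was correct, but it only goes through once the denominator is genuinely comparable to a standard control function---and for that one must replace the holomorphic Euler sum by a sum of moduli of the individual partials.
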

\begin{proof}
Notice that $\frac{\partial f}{\partial x_i}$ is quasihomogeneous of type $(w_1, \ldots, w_n: s_i)$ where $s_i = d-w_i$. Set 
$$ N^*f = \sum_i \bigg|\frac{\partial f}{\partial x_i}\bigg|^{2\alpha_i} = \sum_i \bigg ( \frac{\partial f}{\partial x_i} \overline{\frac{\partial f}{\partial x_i}}\bigg )^{\alpha_i}$$
where $\alpha_i = \frac{k}{s_i}$, $k = lcm(s_i)$. 

Let us denote by $f_t^{int}$ the initial part of $f_t$ w.r.t the weight $w = (w_1, \ldots, w_n)$. 

It is obvious that $N^*f_t^{int}$ is a quasihomogeneous mixed polynomial of the type $(w_1, \ldots, w_n; 2k)$. Since $f^{int}_t$ has isolated singularity at $0$, by Lemma \ref{lem_control_quasi1}, $N^*f_t^{int} \sim \rho_k$. We have $N^*f_t = N^*f_t^{int} + t R(x,t)$ where $fil(R(x, t)) > 2k$. It follows from Lemma \ref{lem_control3} that $R(x,t)/\rho_{k}(x) \to 0$ when $x \to 0$. Thus, $ N^*f_t \sim \rho_{k}$.  

Now, consider the vector field $W = \sum W_i \frac{\partial}{ \partial x_i}$ where $$W_i := \frac{\partial f_t}{\partial t} \bigg (\overline{\frac{\partial f_t}{\partial x_i}}\bigg )^{\alpha_i} \bigg (\frac{\partial f_t}{\partial x_i}\bigg )^{\alpha_i -1}.$$
We have 
$$ \frac{\partial f_t}{ \partial t} N^*f_t = Df_t(W).$$
Without loss of generality, we may assume that $w_1 \leq \ldots \leq w_n$. We have $fil(\frac{\partial f_t}{\partial t}) \geq d + w_n -w_1$ and 
\begin{align*}
fil\bigg (\bigg (\overline{\frac{\partial f_t}{\partial x_i}}\bigg )^{\alpha_i} \bigg (\frac{\partial f_t}{\partial x_i}\bigg )^{\alpha_i -1}\bigg )& = \alpha_i fil\bigg (\bigg (\overline{\frac{\partial f_t}{\partial x_i}}\bigg)\bigg ) + (\alpha_i -1)fil\bigg (\bigg (\frac{\partial f_t}{\partial x_i}\bigg )\bigg )\\
& = (2\alpha_i -1)  fil\bigg (\bigg (\frac{\partial f_t}{\partial x_i}\bigg)\bigg ) \\
& = (2\alpha_i -1)(d-w_i) \\
& = 2k -d +w_i \geq 2k -d +w_1.
\end{align*}
%(\text{ since } fil\bigg (\bigg (\frac{\partial f_t}{\partial x_i}\bigg ) = fil\bigg (\bigg (\overline{\frac{\partial f_t}{\partial x_i}}\bigg)\bigg )) 
Thus, $fil (W_i) \geq (d +w_n-w_1) +(2k -d +w_1) = 2k +w_n$.  By Lemma \ref{lem_control4}, $\frac{W_t}{\rho_{k}(x)}$ is Lipschitz. Since $\rho_k \sim N^*f_t$, $v_t:=\frac{W_t}{N^*f_t}$ is also a Lipschitz vector field. The flow generated by $v$ gives the required bi-Lipschitz triviality (see Theorem \ref{thm_ThomLevine}). 
\end{proof}

\begin{rem}\label{rem_fr_thm} (1) Notice that Theorem 3.3 in \cite{fr} has a gap when applied to homogeneous germs. The statement of the theorem only holds for sufficiently small $t$. Consider for example $f_t(x,y) = x^2 + y^2 + t(xy + y^3)$.  This family satisfies the hypothesis of the theorem but it is not bi-Lipschitz trivial. It is easy to see that $f_2(x,y) = (x+y)^2 + 2y^3$ cannot be bi-Lipschitz equivalent to $f_t$ ($t \neq 2$), for their tangent cones are different. It seems the mistake first occurs in Lemma 3 in \cite{rm} which is used to prove the theorem.

(2) Theorem \ref{thm_5.5} also holds in the real case.

\end{rem}

An application of Theorem \ref{thm_5.5} is that
\begin{cor}\label{cor_trivial} The singularities $X_9$, $Z_{11}$, $W_{12}$, $P_8$, $S_{11}$, $S_{12}$, $U_{12}$  in Arnold's list are bi-Lipschitz trivial. 
\end{cor}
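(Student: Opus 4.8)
The plan is to verify, for each of the seven families $X_9$, $Z_{11}$, $W_{12}$, $P_8$, $S_{11}$, $S_{12}$, $U_{12}$, that the parameter deformation $f_a = f_0 + a\,\theta$ obtained by turning on the modulus $a$ in Arnold's normal form satisfies the hypotheses of Theorem \ref{thm_5.5}. Concretely, for each family I would (i) identify the quasihomogeneous weights $(w_1,\dots,w_n;d)$ of the principal part $f_0$ (the normal form with $a=0$), (ii) check that $f_0$ has an isolated singularity at $0$, (iii) check that the perturbation term $\theta$ (the coefficient of $a$) has filtration $fil(\theta)\ge d + \max_{i,j}\{w_i - w_j\}$, and (iv) check that the initial part of $f_a$ with respect to $w$ has an isolated singularity at $0$ for every value of the parameter. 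Once these are in place for a fixed $a$, Theorem \ref{thm_5.5} gives strong bi-Lipschitz triviality of the one-parameter family joining $f_0$ to $f_a$, hence $f_a \sim f_0$ bi-Lipschitz for all admissible $a$, which is exactly the assertion.

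The bookkeeping is routine but must be done case by case. For instance, for $X_9$ with normal form $x^4 + y^4 + a x^2y^2$ the principal part $x^4+y^4$ is homogeneous of type $(1,1;4)$, so $\max\{w_i-w_j\}=0$ and we need $fil(x^2y^2)\ge 4$, which holds with equality; the initial part of $f_a$ is $f_a$ itself, and $x^4+y^4+ax^2y^2$ has an isolated singularity at $0$ precisely when $a^2\ne 4$, the condition already imposed in Arnold's list. For $Z_{11}$ with normal form $x^3y+y^5 + axy^4$, the principal part $x^3y+y^5$ is quasihomogeneous of type $(w_1,w_2;d)$ with $w_1=2$, $w_2=1$, $d=7$ (since $3\cdot 2+1=7=5\cdot 1+2$); one computes $fil(xy^4)=2+4=6 < 7$, so $xy^4$ is \emph{not} above the critical filtration and one must instead absorb it into the weighted-homogeneous principal part — i.e. treat $x^3y+axy^4+y^5$ as quasihomogeneous of type $(2,1;7)$ with $xy^4$ part of the initial form, and then there is no genuine perturbation term at all, so the family is literally a family of quasihomogeneous polynomials and Lemma \ref{lem_control_quasi1} applies directly once isolated-singularity is checked. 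The same phenomenon occurs for $W_{12}$ ($x^4+y^5+ax^2y^3$, type $(5,4;20)$, $fil(x^2y^3)=10+12=22>20$, genuine perturbation), $S_{11}$, $S_{12}$, $U_{12}$ and $P_8$; for each, one must decide whether the $a$-term sits strictly above the Newton diagram of the principal part (apply Theorem \ref{thm_5.5}) or lies on it (apply Lemma \ref{lem_control_quasi1} to the resulting family of quasihomogeneous polynomials), and in the latter case verify that the whole normal form has isolated singularity for all parameter values in the range prescribed by Arnold.

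The main obstacle is the isolated-singularity verification for the \emph{initial part} of $f_a$ in step (iv), respectively the isolated-singularity-for-all-$t$ hypothesis of Lemma \ref{lem_control_quasi1}: one needs that $\nabla f_a$ (or its principal part) vanishes only at $0$, uniformly in the parameter over a compact set. For families like $X_9$ and $P_8$ this is exactly the non-degeneracy condition ($a^2\ne 4$, $a^3+27\ne 0$) that Arnold records, so no work is needed beyond citing it; but one must be careful that these conditions also guarantee the weighted-initial-form is non-degenerate, and that the compact parameter set $U$ is chosen inside the non-degenerate locus. The remaining checks — computing weights and filtrations — are mechanical. I would organize the proof as a short table listing, for each of the seven families, the weights $(w_i;d)$, the perturbation term with its filtration, and the relevant Arnold non-degeneracy condition, followed by a one-line invocation of Theorem \ref{thm_5.5} or Lemma \ref{lem_control_quasi1}.
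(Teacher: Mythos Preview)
Your approach is exactly the paper's: the corollary is stated immediately after Theorem~\ref{thm_5.5} and is simply an application of it to each of the seven families. The case-by-case verification you outline (weights, filtration of the modulus term, isolated singularity of the initial part) is precisely what is required.

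There is, however, a genuine arithmetic slip in your treatment of $Z_{11}$. With weights $(w_1,w_2)=(2,1)$ the monomial $y^5$ has filtration $5\cdot 1=5$, not $7$; the expression ``$5\cdot 1+2$'' is spurious. In fact $x^3y+y^5$ is quasihomogeneous of type $(4,3;15)$: $3\cdot4+3=15=5\cdot3$. Then $fil(xy^4)=4+4\cdot3=16$ and $\max_{i,j}\{w_i-w_j\}=1$, so the hypothesis $fil(\theta)\ge d+\max\{w_i-w_j\}$ of Theorem~\ref{thm_5.5} becomes $16\ge 16$, which holds. Hence $Z_{11}$ falls under Theorem~\ref{thm_5.5} directly, with no need for an alternate route. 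Your fallback suggestion of treating $x^3y+axy^4+y^5$ as quasihomogeneous cannot work anyway: equating the degrees of $x^3y$, $xy^4$, and $y^5$ forces $w_1=w_2$ and then $4w_1=5w_1$, a contradiction, so this polynomial is never quasihomogeneous and Lemma~\ref{lem_control_quasi1} does not apply to it.

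With that correction, your table for the remaining six families ($X_9$ with $(1,1;4)$, $W_{12}$ with $(5,4;20)$, $P_8$ with $(1,1,1;3)$, $S_{11}$ with $(4,5,6;16)$, $S_{12}$ with $(4,5,3;13)$, $U_{12}$ with $(4,4,3;12)$) is correct and the invocation of Theorem~\ref{thm_5.5} goes through in every case.
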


In the following, we show that certain families are bi-Lipschitz trivial. Theorem \ref{thm_5.5} does not apply to these families. 

\begin{lem}\label{lem_t245} $T_{2, 4,5}: f_t(x, y) = x^4 + y^5 +tx^2y^2 (t\neq 0)$ is strongly bi-Lipschitz trivial.
\end{lem}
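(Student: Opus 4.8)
The plan is to construct a continuous vector field of the form $X = \partial/\partial t + \sum_i X_i(x,y,t)\partial/\partial x_i$ annihilating $F(x,y,t) = x^4 + y^5 + t x^2 y^2$ and Lipschitz in $(x,y)$, and then invoke the Thom--Levine criterion (Theorem \ref{thm_ThomLevine}). Unlike the situation in Theorem \ref{thm_5.5}, the perturbation $x^2 y^2$ has weighted degree $4\cdot\frac14 \cdot 2 \cdot (\text{with weights } w_x = 5, w_y = 4, d = 20) = 18 < 20$, so it does not lie strictly above the diagonal and the ``automatic'' triviality theorem fails to apply; this is precisely why a hands-on construction is needed. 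Concretely, with weights $(w_x, w_y; d) = (5, 4; 20)$ one has $\partial F/\partial x = 4x^3 + 2txy^2$ of weighted degree $15$ and $\partial F/\partial y = 5y^4 + 2tx^2 y$ of weighted degree $16$, and one must solve $\partial F/\partial t = x^2 y^2 = -(X_1 \partial F/\partial x + X_2 \partial F/\partial y)$.

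First I would look for $X_1, X_2$ among (mixed) rational functions built from the partials, imitating the $N^*f$ construction in the proof of Theorem \ref{thm_5.5} but adapted to the non-quasihomogeneous deformation: set a control function $\rho$ of type $(5,4;2k)$ with $k$ a suitable common multiple, put $N^* F_t$ equal to a sum $\sum_i (\partial F_t/\partial x_i)^{\alpha_i}(\overline{\partial F_t/\partial x_i})^{\alpha_i}$ of the appropriate weighted degree $2k$, and then take $X_i = \big(x^2 y^2 \cdot (\text{cofactor}_i)\big)/N^*F_t$. The key analytic points are then: (a) that $N^*F_t \sim \rho_k$ near $0$, which will follow from the isolated-singularity hypothesis together with Lemmas \ref{lem_control_quasi1}, \ref{lem_control2} and \ref{lem_control3} applied to the initial (quasihomogeneous) part, after checking that the correction terms coming from the $tx^2y^2$ monomial have strictly larger filtration; and (b) that the numerators $x^2 y^2 \cdot \text{cofactor}_i$ have filtration at least $2k + w_n = 2k + 5$, so that Lemma \ref{lem_control4} gives the Lipschitz bound on $X_i$ uniformly in $t$ over a compact parameter set.

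The filtration bookkeeping in step (b) is the crux. The naive cofactors from $\partial F/\partial x$, $\partial F/\partial y$ alone will not immediately clear the threshold $2k + 5$, because $x^2y^2$ sits below the Newton diagram of $(\partial F/\partial x, \partial F/\partial y)$ in the relevant weighting; I expect to need an identity expressing $x^2 y^2$ (or a monomial multiple of it of just-high-enough degree) as an $\mathcal{E}_n$-combination of $\partial F/\partial x$ and $\partial F/\partial y$ with coefficients of controlled filtration --- essentially using that $F$ is finitely determined so $x^k$-monomials of high order lie in the Jacobian ideal, and checking the weighted orders are as claimed. Concretely one uses relations such as $4 x \cdot \partial F/\partial x - (\text{something})\cdot \partial F/\partial y = 16 x^4 + \ldots$ and $5 y \cdot \partial F/\partial y - \ldots = 25 y^5 + \ldots$ to solve for $x^2 y^2$ times a correcting monomial, absorbing lower-order error terms using Lemma \ref{lem_control1}/\ref{lem_control3}. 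The main obstacle is verifying that this algebraic identity exists \emph{with the sharp filtration bound} $2k+w_n$ needed for Lemma \ref{lem_control4}; if the straightforward attempt falls short by a little, one multiplies through by an extra factor of $\rho_k$ or adjusts $k$, at the cost of a more elaborate but still elementary computation.

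Once the vector field $v_t = (X_1, X_2)$ is shown to be continuous, Lipschitz in $(x,y)$ with constant uniform in $t$, and to satisfy $X.F = 0$, Theorem \ref{thm_ThomLevine} immediately yields that $F$ is a strongly bi-Lipschitz trivial deformation of $f_0 = x^4 + y^5$, which is the claim. I would also note that, as in Remark \ref{rem_fr_thm}(2), nothing in the argument is special to $\bb C$, but since we are in the complex setting this is not needed. The only hypothesis to keep track of throughout is that $t \neq 0$ is harmless --- the construction is valid on any compact parameter set, including neighbourhoods of any fixed $t_0 \neq 0$, and in fact on a neighbourhood of $t = 0$ as well since the initial part $x^4 + y^5$ already has an isolated singularity.
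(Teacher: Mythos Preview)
Your proposal has a genuine structural gap: the quasihomogeneous framework with weights $(5,4;20)$ cannot be made to work here, and your attempt to salvage it is based on a sign error. Under these weights, $fil_w(x^2y^2) = 18 < 20$, so the perturbation term has \emph{lower} filtration than the diagonal, not higher. Consequently the ``initial part'' of $F_t$ with respect to $(5,4)$ is $tx^2y^2$ alone, which does \emph{not} have an isolated singularity, and the comparison $N^*F_t \sim \rho_k$ in your step (a) fails outright. Likewise the terms $2txy^2$ in $\partial F_t/\partial x$ and $2tx^2y$ in $\partial F_t/\partial y$ sit \emph{below} the expected filtration, so the ``correction terms'' do not vanish in the limit as you assert. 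No amount of multiplying by extra powers of $\rho_k$ or adjusting $k$ fixes this, because the obstruction is that $\rho_k$ has a single compact face and cannot control mixed monomials like $|x|^2|y|^4$.

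What the paper actually does is abandon quasihomogeneity altogether and take the control function $\rho = |\partial f_t/\partial x|^2 + |\partial f_t/\partial y|^2$. The point is that for $t \neq 0$ the Newton polyhedron $\Gamma_+(\rho)$ has vertex set $A = \{(6,0),(2,4),(0,8)\}$ and hence \emph{two} one-dimensional compact faces, with weights $w_{\sigma_1} = (4,4)$ and $w_{\sigma_2} = (6,3)$ (common total weight $d = 24$). One checks $\rho \sim \rho_A$ via Lemmas \ref{lem_control1}--\ref{lem_control2}, writes $\partial f_t/\partial t = \tilde{a}\,\partial f_t/\partial x + \tilde{b}\,\partial f_t/\partial y$ with $\tilde{a} = \rho^{-1}\cdot(\partial f_t/\partial t)\,\overline{\partial f_t/\partial x}$ and similarly for $\tilde{b}$, and then verifies the filtration inequalities of Lemma \ref{lem_control3} \emph{separately for each face}. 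This two-face bookkeeping is exactly what replaces your missing ``identity expressing $x^2y^2$ with controlled coefficients.''

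Finally, your last remark is incorrect: the hypothesis $t \neq 0$ is genuinely used. The vertex $(2,4)$ in $A$ comes solely from $|2txy^2|^2$, so at $t = 0$ one loses $\rho \sim \rho_A$ (for instance along $|x| = |y|^{4/3}$ the ratio $\rho_A/\rho$ blows up). The family is \emph{not} asserted to be trivial through $t = 0$, and the argument does not establish it.
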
\label{lem_trivial_T245}
\begin{proof} It suffices to prove that for $t_0 \neq 0$ there is a neighbourhood $U$ of $t_0$ such that $f_t$ is bi-Lipschitz trivial along $U$. We will show that there is continuous vector field  $X(x,y,t) =- \frac{\partial}{\partial t} + X_1(x,y,t)\frac{\partial}{\partial x} +  X_2(x,y,t)\frac{\partial}{\partial y}$ which is Lipschitz in $(x,y)$ such that $X.f_t = 0$. The conclusion will then follow from the Thom-Levine criterion. 

We have
$$\frac{\partial f_t}{\partial x} = 4x^3 +2txy^2, \frac{\partial f_t}{\partial y }= 5y^4 +2tx^2y^2, \frac{\partial f_t}{\partial t}= x^2y^2.$$ 
Put $\rho = \left|\frac{\partial f_t}{\partial x}\right|^2 + \left|\frac{\partial f_t}{\partial y}\right|^2 = |4x^3 + 2txy^2|^2 + |5y^4 + 2tx^2y|^2$. It is a mixed polynomial. Denote by $A$ the set of vertices of $\Gamma_+(\rho)$, i.e. $A = \{(6,0),(2,4),(0,8)\}$. 

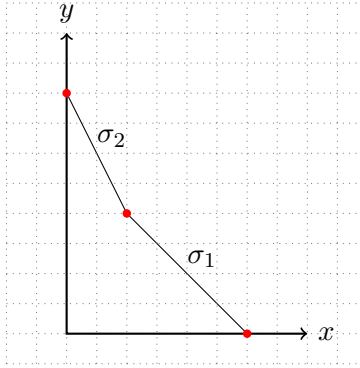
\begin{figure}[h!]
\begin{tikzpicture}[scale=.8]
\draw (3,0) coordinate (a_1) -- (1,2) coordinate (a_2) -- (0,4) coordinate (a_3);
\draw[step=.5cm,gray,dotted] (-1,-.5) grid (5,5.5); 
\draw [<->,thick] (0,5) node (yaxis) [above] {$y$}
|- (4,0) node (xaxis) [right] {$x$};
\fill [red] (a_1) circle (2pt);
\fill [red] (a_3) circle (2pt);
\fill [red] (a_2) circle (2pt);
\node at (2.25,1.25) {$\sigma_1$}; 
\node at (.75,3.25) {$\sigma_2$}; 
\end{tikzpicture}
\caption{Newton diagram of $\Gamma_+{\rho}$}\label{Newtdiagram}
\end{figure}
	
The control function associated with $A$ is $\rho_A = |x|^6 + |x|^2|y|^4 + |y|^8$. Since $supp(A) \subset \Gamma_{+}A$, $\rho \lesssim \rho_A$.  It is easy to check that $0$ is a $\Gamma_{+}(A)$-isolated point of $\rho$, hence by Lemma \ref{lem_control2}, $\rho_A \lesssim \rho$. Thus, $\rho \sim \rho_A$. 

Notice that 
\begin{align}
\label{eq_lem_2}
\rho . \frac{\partial f_t}{\partial t} = \left(\frac{\partial f_t}{\partial t}\overline{ \frac{\partial f_t}{\partial x}}\right)\frac{\partial f_t}{\partial x}  + \left(\frac{\partial f_t}{\partial t}\overline{ \frac{\partial f_t}{\partial y}}\right)\frac{\partial f_t}{\partial y}.
\end{align}
This implies,
\begin{align*}
\frac{\partial f_t}{\partial t} &=\frac{ \left(\frac{\partial f_t}{\partial t}\overline{ \frac{\partial f_t}{\partial x}}\right)}{\rho}\frac{\partial f_t}{\partial x}  + \frac{\left(\frac{\partial f_t}{\partial t}\overline{ \frac{\partial f_t}{\partial y}}\right)}{\rho}\frac{\partial f_t}{\partial y}\\
\end{align*}
Put $a = \frac{\partial f_t}{\partial t}\overline{ \frac{\partial f_t}{\partial x}}$ and $b = \frac{\partial f_t}{\partial t}\overline{ \frac{\partial f_t}{\partial y}}$. Then, set
$$\tilde{a}(x,y,t) = \begin{cases}
\frac{a}{\rho},& (x,y) \neq (0,0)\\
0 & (x,y) = (0,0) 
\end{cases}$$ 
and
$$\tilde{b}(x,y,t) = \begin{cases}
\frac{b}{\rho}, & (x,y) \neq (0,0)\\
0 & (x,y) = (0,0) 
\end{cases}.$$
By the equation (\ref{eq_lem_2}), the vector field  
$$X(x, y, t) = -1\frac{\partial}{\partial t}+\tilde{a}\frac{\partial}{\partial x}+\tilde{b}\frac{\partial}{\partial y}$$
 satisfies   $X.f_t = 0$. We will now show that $X$ is continuous. The Newton diagram of $A$ has two $1$-dimensional compact faces, denoted $\sigma_1$ and $\sigma_2$ (see the Figure \ref{Newtdiagram}).  The weights corresponding to $\sigma_1$ and $\sigma_2$ are $w_{\sigma_1} = (4,4)$ and $w_{\sigma_2} = (6,3)$ with the total weight $d =24$. Computation gives $\min_{i=1,2} \{ fil_{w_{\sigma_i}} (a)\} =28>d $ and $\min_{i=1,2} \{ fil_{w_{\sigma_i}} (b)\} =30>d$. By Lemma \ref{lem_control3}, $\frac{a}{\rho_A}$ and $\frac{b}{\rho_a}$ tend to $0$ when $(x, y)$ tends to $(0,0)$. Since $\rho \sim \rho_A$, $\tilde{a}$ and $\tilde{b}$ are continuous. 

In order to prove that $\tilde{a}$ and $\tilde{b}$ are Lipschitz in $(x, y)$, it suffices to show that  the derivatives with respect to $x, \overline{x}, y$ and $\overline{y}$ are bounded everywhere except $0$. We will show that $\frac{\partial A}{\partial x}$ is bounded, the boundedness of the other partial derivatives can be shown similarly. 

For $(x, y) \neq (0,0)$,
$$
\frac{\partial \tilde{a}}{\partial x} =
\frac{1}{\rho^2}(\rho \frac{\partial a}{ \partial x} - a \frac{\partial \rho}{\partial x}) \sim \frac{1}{\rho_{2A}}(\rho \frac{\partial a}{ \partial x} - a \frac{\partial \rho}{\partial x}) 
$$
Notice that 
$fil(\rho \frac{\partial a}{ \partial x} - a \frac{\partial \rho}{\partial x}) \geq fil(a) + fil(\rho) - w_x$. Restricting to the compact faces $\sigma_1$ and $\sigma_2$, we get
$$fil_{w_{\sigma_1}}((\rho \frac{\partial a}{ \partial x} - a \frac{\partial \rho}{\partial x}) \geq  fil_ {w_{\sigma_1}}(a) + fil_{w_{\sigma_1}} (\rho) - w_{1,x}= 28+24-4 = 48,$$
$$fil_{w_{\sigma_2}}((\rho \frac{\partial a}{ \partial x} - a \frac{\partial \rho}{\partial x}) \geq  fil_ {w_{\sigma_2}}(a) + fil_{w_{\sigma_2}} (\rho) - w_{1,x}= 30 + 24-6  = 48.$$
Thus, $\min_{i =1,2} fil_{w_{\sigma_i}}((\rho \frac{\partial a}{ \partial x} - a \frac{\partial \rho}{\partial x})) = 48 \geq 2d$. By Lemma \ref{lem_control3}, $\frac{\partial A}{\partial x}$ is bounded.
\end{proof}
\begin{rem}\label{rem_triviality}(i) Using the method of Lemma \ref{lem_t245}, one can easily prove that the following families are strongly bi-Lipschitz trivial :

(1) $T_{2,5,5}$, take $\rho = \left|\frac{\partial f_t}{\partial x}\right|^2 + \left|\frac{\partial f_t}{\partial y}\right|^2$,

(2) $T_{p,q,r}$, $3 \leq p, q, r \leq 5$,  take $$\rho  = \left|\frac{\partial f_t}{\partial x}\right|^2 + \left|\frac{\partial f_t}{\partial y}\right|^2 + \left|\frac{\partial f_t}{\partial z}\right|^2,$$

(3) $Q_{10}$, take $$ \rho = \left|\frac{\partial f_t}{\partial x}\right|^4 + \left|\frac{\partial f_t}{\partial y}\right|^4 + |y|^2\left|\frac{\partial f_t}{\partial z}\right|^4.$$
\end{rem}

Notice that all the control functions we have used so far are of the form $\rho = \sum_{i =1}^n a_i \left|\frac{\partial f_t}{\partial x_i}\right|^2$. It is easy to see that $\rho \in Jf$ since we can write  
$$\rho = \sum_{i =1}^n a_i \overline{\frac{\partial f_t}{\partial x_i}} \frac{\partial f_t}{\partial x_i}.$$
This implies that
$$\frac{\partial f_t}{\partial t} = \sum_{i =1}^n  \frac{ a_i \frac{\partial f_t}{\partial t}\overline{\frac{\partial f_t}{\partial x_i}}}{\rho} \frac{\partial f_t}{\partial x_i}.$$
If the vector field  $W = (W_1, \ldots, W_n)$ where $W_i =  \frac{ a_i \frac{\partial f_t}{\partial t}\overline{\frac{\partial f_t}{\partial x_i}}}{\rho}$ is Lipschitz  then it satisfies the Thom--Levine criterion. Checking the Lipschitzness of $W$ is quite easy because we know exactly what the $W_i$'s are. However, the vector field $W$ induced by the control function $\rho$ of the form above is not always Lipschitz. For example, in the case $Q_{11}$, we could not find a control function of the same type.  For this reason, in the following we suggest another control function for $Q_{11}$ that belongs to $Jf$. This control function  does not give precisely what the vector field $W$ is, but it  enough for us to prove the bi-Lipschitz triviality of $Q_{11}$.

\begin{lem}\label{lem_q11trivial}
	$f(x,y,z) = x^3 + y^2z + xz^3 + t z^5$ is strongly bi-Lipschitz trivial.
\end{lem}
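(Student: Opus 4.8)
The plan is to construct, for each fixed value $t_0$ of the parameter, a neighbourhood $U$ of $t_0$ and a continuous vector field $X = -\frac{\partial}{\partial t} + X_1\frac{\partial}{\partial x} + X_2\frac{\partial}{\partial y} + X_3\frac{\partial}{\partial z}$, Lipschitz in $(x,y,z)$ with constant independent of $t\in U$, such that $X.f_t=0$; the lemma then follows from the Thom--Levine criterion (Theorem \ref{thm_ThomLevine}). We have $\frac{\partial f_t}{\partial x} = 3x^2+z^3$, $\frac{\partial f_t}{\partial y}=2yz$, $\frac{\partial f_t}{\partial z}=y^2+3xz^2+5tz^4$ and $\frac{\partial f_t}{\partial t}=z^5$, so the equation $X.f_t=0$ becomes $X_1\frac{\partial f_t}{\partial x}+X_2\frac{\partial f_t}{\partial y}+X_3\frac{\partial f_t}{\partial z}=z^5$. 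Following the discussion preceding the lemma, I would look for a control function $\rho = c_1\frac{\partial f_t}{\partial x}+c_2\frac{\partial f_t}{\partial y}+c_3\frac{\partial f_t}{\partial z}\in Jf_t$, with $c_i$ mixed polynomials in $(x,y,z)$ and with $\rho$ real and strictly positive away from $0$, and then set $X_i = \frac{c_iz^5}{\rho}$ for $(x,y,z)\neq 0$ and $X_i(0)=0$. Then $\sum_i X_i\frac{\partial f_t}{\partial x_i} = \frac{z^5}{\rho}\,\rho = z^5$ automatically.

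The choice of $\rho$ is the delicate point. The Euler relation $6x\frac{\partial f_t}{\partial x}+7y\frac{\partial f_t}{\partial y}+4z\frac{\partial f_t}{\partial z}=18f_t+2tz^5$ together with $y^2z\in Jf_t$ gives $z^5\equiv\frac{9}{5t}x^3\pmod{Jf_t}$, and since the $z^5$-direction is a genuine modulus of $Q_{11}$ one has $z^5\notin Jf_t$; hence $\rho$ must itself vanish at $0$, so the $X_i$ are not obtained by dividing by a unit. Moreover the naive choice $\rho=\sum_i\big|\frac{\partial f_t}{\partial x_i}\big|^2$ fails: the Newton polyhedron of $\rho$ has a compact edge $e$ with endpoints $(4,0,0)$ and $(0,0,6)$, along which $\rho$ restricts to $|3x^2+z^3|^2$, which vanishes on $(\bb C^*)^3$, so $0$ is not a $\Gamma_+$-isolated point of $\rho$ and Lemma \ref{lem_control2} cannot be applied. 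I would instead take a suitable $\rho\in Jf_t$, real and positive away from $0$ (a weighted combination of squared partials together with auxiliary elements of $Jf_t$ such as $|z|^2\big|\frac{\partial f_t}{\partial x}\big|^2$), chosen so that the restriction of $\rho$ to every compact face of its Newton polyhedron has no zero in $(\bb C^*)^3$; one records the $c_i$ explicitly. Letting $A$ be the set of vertices of $\Gamma_+(\rho)$, we then get $\rho\lesssim\rho_A$ from Lemma \ref{lem_control1} (since $\supp(\rho)\subset\Gamma_+(A)$) and $\rho\gtrsim\rho_A$ from Lemma \ref{lem_control2} ($\Gamma_+(A)$-isolatedness), so $\rho\sim\rho_A$, with constants independent of $t\in U$ by continuity in $t$ and compactness of $U$.

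It then remains to verify that each $X_i=c_iz^5/\rho$ is continuous and Lipschitz in $(x,y,z)$, uniformly in $t$. Continuity at $0$ follows because $\supp(c_iz^5)$ lies in the interior of $\Gamma_+(A)$, whence $c_iz^5/\rho_A\to 0$ by Lemma \ref{lem_control1}; away from $0$ the $X_i$ are quotients of mixed polynomials with non-vanishing denominator. For the Lipschitz bound I would invoke Lemma \ref{lem_bounded_derivative}: it suffices to bound the derivatives of $X_i$ with respect to $x,\bar x,y,\bar y,z,\bar z$ away from $0$. Writing $\frac{\partial X_i}{\partial x_j}=\frac{1}{\rho^2}\big(\rho\,\frac{\partial(c_iz^5)}{\partial x_j}-c_iz^5\,\frac{\partial\rho}{\partial x_j}\big)\sim\frac{1}{\rho_{2A}}\big(\rho\,\frac{\partial(c_iz^5)}{\partial x_j}-c_iz^5\,\frac{\partial\rho}{\partial x_j}\big)$, using $\rho^2\sim\rho_A^2\sim\rho_{2A}$ (Lemma \ref{lem_control3}), one checks on each compact face $\sigma$ of $\Gamma_+(A)$, with weight $w_\sigma$ and total weight $d_\sigma$, that $fil_{w_\sigma}$ of the numerator is $\geq 2d_\sigma$. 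This is a finite computation using $fil_{w_\sigma}(\rho)=d_\sigma$, the inequality $fil_{w_\sigma}(c_iz^5)\geq d_\sigma+w_{\sigma,j}$ that the choice of $\rho$ is engineered to produce, and $fil_{w_\sigma}(\partial(\cdot)/\partial x_j)\geq fil_{w_\sigma}(\cdot)-w_{\sigma,j}$; boundedness then follows from Lemma \ref{lem_control3}, and the $\bar x_j$-derivatives are treated identically. Uniformity in $t$ comes once more from compactness of $U$, and Theorem \ref{thm_ThomLevine} yields the strong bi-Lipschitz triviality. The main obstacle is precisely the first step of the second paragraph: exhibiting an explicit $\rho\in Jf_t$ that is simultaneously $\Gamma_+$-isolated (so that $\rho\sim\rho_A$) and whose support is deep enough inside $\Gamma_+(A)$ that the filtration inequalities for $c_iz^5$ hold on every face. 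The obvious candidates fail one requirement or the other, and there is no mechanical recipe; one finds $\rho$ by analysing the Newton diagram.
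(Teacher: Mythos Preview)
Your outline follows the template of Lemma~\ref{lem_t245} faithfully, and you correctly diagnose why the naive $\rho=\sum_i|\partial_{x_i}f_t|^2$ fails. But the proposal does not actually prove the lemma: the entire content of the argument is the construction of an explicit $\rho$, and you leave this as ``one finds $\rho$ by analysing the Newton diagram'' while conceding that ``the obvious candidates fail one requirement or the other.'' That is the gap. Without a concrete $\rho$ and a verification of the face-by-face filtration inequalities, nothing has been established.

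The paper resolves this by inverting your search: rather than starting inside $Jf_t$ and hoping for $\Gamma_+$-isolatedness, it starts from the \emph{standard} control function $\rho=|x|^{16}+|y|^{16}+|z|^{16}$ of type $(1,1,1;16)$, for which $\rho\sim\rho_A$ is automatic (a single compact face, no zeros on $(\bb C^*)^3$), and then proves $\rho\in Jf_t$ with coefficients of filtration $\geq 12$. Since $\rho=x^8\bar x^8+y^8\bar y^8+z^8\bar z^8$, this reduces to showing $x^8,y^8,z^8\in Jf_t$ with coefficients of filtration $\geq 4$, a finite algebraic computation carried out by triangular elimination and a Gr\"obner-basis check. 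With homogeneous weights $(1,1,1)$ there is only one face, so your ``finite computation on each compact face $\sigma$'' collapses to the single inequality $fil(\varepsilon_i z^5)\geq 17$, equivalently $fil(\varepsilon_i)\geq 12$. This is the missing idea: choose $\rho$ for its Newton-polyhedron properties first, and make membership in $Jf_t$ the thing to verify, not the other way round.
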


\begin{proof}
	The partial derivatives of $f$ are $\frac{\partial f}{\partial x} = 3x^2 + z^3$, $\frac{\partial f}{\partial y} = 2yz$, $\frac{\partial f}{\partial z} = y^2 + 3xz^2 +5tz^4$ and $\frac{\partial f}{\partial t} = z^5$. Set $\rho = |x|^{16} + |y|^{16} + |z|^{16}$. It is obvious that $\rho$ is a standard control function of type $(1, 1, 1; 16)$. We will show that there exist germs of mixed polynomials $\varepsilon_i(x, y,z,t)$ ($i = 1,2,3$) satisfying
	\begin{align}\label{eq_Q11_1}
	\rho = \varepsilon_1 \frac{\partial f}{\partial y} + \varepsilon_2 \frac{\partial f}{\partial y} + \varepsilon_3 \frac{\partial f}{\partial z},
	\end{align}
such that $\frac{{\varepsilon_i z^5}}{\rho}$ is a Lipschitz function germ in the variables $(x, y,z)$ for each $i = 1,2,3$. Multiplying the equation (\ref{eq_Q11_1}) by $\frac{\partial f}{\partial t} = z^5$ we get,
	\begin{align}\label{eq_Q11_2}
	\frac{\partial f}{\partial t} =  \left(\frac{\varepsilon_1 z^5}{\rho}\right) \frac{\partial f}{\partial x} + \left( \frac{\varepsilon_2 z^5} {\rho}\right ) \frac{\partial f}{\partial y} + \left(\frac{\varepsilon_3 z^5}{\rho}\right) \frac{\partial f}{\partial z}.
	\end{align}
The lemma will then follow from the Thom-Levine criterion.	

Observe that, for $\frac{\varepsilon_i z^5}{\rho}$ to be a Lipschitz function germ, it is enough to show that its partial derivatives in the variables $x, \bar{x}, y, \bar{y}, z$ and $\bar{z}$ are bounded. Set $\varepsilon_i' = \varepsilon z^5$.  We have	
	$$\frac{\partial}{\partial u} \bigg(\frac{\varepsilon_i z^5}{\rho} \bigg) =  \frac{\frac{\partial \varepsilon'_i}{\partial u}\rho -\varepsilon'_i \frac{ \partial \rho}{\partial u}}{\rho^2},$$
where $u \in \{x, \bar{x},y,\bar{y}, z, \bar{z}\}$.

Put $N = \frac{\partial \varepsilon'_i}{\partial u}\rho -\varepsilon'_i \frac{ \partial \rho}{\partial u}$. Notice then that $$fil (N) \geq fil(\rho) + fil(\varepsilon'_i) - w_u,$$ where $w_u$ is the the weight of $u$. By Lemma \ref{lem_control1}, if 
\begin{equation}\label{eq_Q11_3}
fil(\rho) + fil(\varepsilon'_i) - w_u \geq fil(\rho^2) = 2fil(\rho),\,\,\, \forall u \in \{x, \bar{x}, y,\bar{y}, z, \bar{z}\}
\end{equation}
then $\frac{\partial}{\partial u} \bigg(\frac{\varepsilon_i z^5}{\rho} \bigg)$ is bounded. Since the weights of $x, \bar{x}, y, \bar{y}, z, \bar{z}$ are equal to $1$, the inequality (\ref{eq_Q11_3}) is equivalent to 
$fil(\varepsilon'_i)  \geq 17$, or $fil (\varepsilon_i) \geq 12$.

Since $\rho = x^8 \bar{x}^8 + y^8 \bar{y}^8 + z^8 \bar{z}^8$, to prove (\ref{eq_Q11_1}) it suffices to prove that $x^8$, $y^8$ and $z^8$ belong to $Jf$ and their coefficients have filtrations $\geq 4$. Here, by coefficients of a polynomial $h = h_1\frac{\partial f}{\partial x} + h_2 \frac{\partial f}{\partial y} + h_3\frac{\partial f}{\partial z}\in Jf$ we mean $(h_1, h_2, h_3)$. 

Observe that,
\[
	\begin{array}{rcccccccccc}
	\frac{\partial f}{\partial x}\cdot x^6    & = & 3x^8 &+& x^6z^3  & \\       
	\frac{\partial f}{\partial x}\cdot x^4z^3 & = &      & & 3x^6z^3 &+& x^4z^6      
	
	\end{array}
	\]
This gives
\begin{equation}\label{eq_Q11_4}
\left({\begin{array}{cc}
 3 & 1 \\
 0 & 3 \\
 \end{array}}\right)
\left(\begin{array}{c}
 x^8\\
 x^6z^3\\
 \end{array}\right)
 =
\left(\begin{array}{c} 
\frac{\partial f}{\partial x}\cdot x^6 \\ 
\frac{\partial f}{\partial x}\cdot x^4z^3 -x^4z^6\\
\end{array}\right)
\end{equation}
It is easy to check that $z^6 \in Jf$ (see the Gr\" oebner basis of $Jf$). Then, $x^4z^6 \in Jf$ and its coefficients have filtrations $\geq 4$. Since the system (\ref{eq_Q11_4}) has unique solution, $x^8 \in Jf$. The filtrations of $x^6$ and $x^4z^3$ are $\geq 4$, this implies that the coefficients of $x^8$ have filtrations $\geq 4$.

Similarly,
	\[
	\begin{array}{rcccccccccc}
	\frac{\partial f}{\partial x}\cdot z^5 &=& z^8 &+& 3x^2 z^5 & &        & &         & & \\
	\frac{\partial f}{\partial z}\cdot xz^3&=&     & &  3x^2z^5 &+& 5txz^7 &+& xz^3y^2 & &\\
	\frac{\partial f}{\partial z}\cdot z^5 &=&     & &          & & 3xz^7  &+& 5tz^9   &+& z^5y^2\\
	\end{array}
	\]  
This gives
\begin{equation}\label{eq_Q11_5}
\left({\begin{array}{ccc}
 1 & 3 & 0 \\
 0 & 3 & 5t\\
 0 & 0 & 3
 \end{array}}\right)
\left(\begin{array}{c}
 z^8\\
 x^2z^5\\
 xz^7 
 \end{array}\right)
 =
\left(\begin{array}{c} 
\frac{\partial f}{\partial x}\cdot z^5 \\ 
\frac{\partial f}{\partial z}\cdot xz^3 -xz^3y^2\\
\frac{\partial f}{\partial z}\cdot z^5 - 5tz^9 - z^5 y^2
\end{array}\right)
\end{equation}
Put $g(x, z) = f(x, 0, z)$. We have $\frac{\partial g}{\partial x } = \frac{\partial f}{\partial x}$ and $\frac{\partial g}{\partial z} = \frac{\partial f}{\partial z} - y^2$. Checking the Gr\" oebner basis of $Jg$, we have $z^5 \in Jg$. Therefore, there exist polynomials $\eta_1$ and $\eta_2$ such that 
$$
	z^5 = \eta_1 \frac{\partial g}{\partial x} + \eta_2 \frac{\partial g}{\partial z}= \eta_1 \frac{\partial f}{\partial x} + \eta_2 (\frac{\partial f}{\partial z} - y^2)
$$
Thus,
	\begin{align*}
	z^9 &= \eta_1\cdot z^4 . \frac{\partial f}{\partial x} + \eta_2 \cdot z^4 \cdot  \frac{\partial f}{\partial z} - \eta_2 \cdot z^4 y^2\\
	&= \eta_1\cdot y^4 . \frac{\partial f}{\partial x} + \eta_2 \cdot y^4 \cdot\frac{\partial f}{\partial z} - \frac{1}{2}\eta_2\cdot z^3y\cdot \frac{\partial f}{\partial y}. 
	\end{align*}
This shows that $z^9 \in Jf$ and its coefficients have filtrations $\geq 4$. Also, $xz^3y^2 = \frac{1}{2} xz^2y \cdot\frac{\partial f }{\partial y}$, $z^5y^2 = \frac{1}{2} z^4y\cdot \frac{\partial f }{\partial y}$, therefore, they belong to $Jf$ and the filtrations of their coefficients $\geq 4$. The system (\ref{eq_Q11_5}) has unique solution, this implies that $z^8 \in Jf$ with coefficients of filtrations $\geq 4$. 
	
Finally $y^8 = y^6 \cdot\frac{\partial f}{\partial z} -\frac{1}{2}(3xy^5z + 5ty^5z^3)\cdot\frac{\partial f}{\partial x}$, $y^8 \in Jf$ and its coefficients have filtrations $\geq 6$.  
\end{proof}

\section{Classification of Lipschitz simple germs}\label{sec9}
\subsection{Adjacencies}
Consider the list of adjacencies given in Theorem \ref{thm_arnold2}.  We know from a result of Brieskorn \cite{Brieskorn} that the singularities not appearing in the brackets do not deform to $J_{10}$. In this section we prove that the singularities enclosed in the brackets deform to $J_{10}$. 

Let $f_t(x,y) = x^3 + txy^4 + y^6 \in \ak m_2$. Give $(x,y)$ the weight $w=(2,1)$. Then, $fil_w(f_t) = 6$. It follows from Arnold's result \cite{Arnold1} that 
\begin{lem}\label{lem_arnold_j10}
(i) Any polynomial $g (x, y) \in \ak m_2$ with isolated singularity at $0$ and $fil_w(g) =6$ is smoothly equivalent to $f_t$ for some $t$;

(ii) If $g(x, y)$ is a monomial such that $fil_w(g) \geq 7$ then $f_t + g$ is smoothly equivalent to $f_t$.
\end{lem}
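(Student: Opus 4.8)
The strategy is to reduce both statements to classical facts in Arnold's quasihomogeneous singularity theory for the weight $w=(2,1)$ on $(x,y)$. Here $x^3$ has filtration $6$, $y^6$ has filtration $6$, and $xy^4$ has filtration $6$, so the quasihomogeneous piece of filtration exactly $6$ is a linear combination $ax^3 + bxy^4 + cy^6 + dx y^2 \cdot(\cdots)$ — in fact the monomials of $w$-filtration $6$ are precisely $x^3$, $x^2y^2$, $xy^4$, $y^6$, and the monomial $x^2 y^2$ can be removed by the coordinate change $x \mapsto x - \lambda y^2$ exactly as noted in the paragraph after Theorem \ref{thm_arnold2}. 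Thus any $g$ with $fil_w(g)=6$ has $w$-initial part that, after this translation, is $a x^3 + b x y^4 + c y^6$ with $a,c$ determined by whether $g$ has isolated singularity. The main computational input is simply listing these monomials and recording the coordinate change; the conceptual input is a finite determinacy / quasihomogeneous normal form statement.

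For part (i): first I would normalize the $w$-initial form $g_6$ of $g$. Since $g$ has an isolated singularity at $0$, $g_6$ cannot be divisible by a square of a linear form in a way that forces a non-isolated singularity; one checks that after the translation $x\mapsto x-\lambda y^2$ and rescaling $x,y$, $g_6$ becomes $x^3 + t xy^4 + y^6$ for some $t$ with $4t^3+27\neq 0$ (the non-degeneracy condition, equivalent to $g_6$ having an isolated critical point, equivalently $J_{10}$ being the class of $g_6$). This is precisely the classification of the quasihomogeneous filtration-$6$ germs in two variables for weights $(2,1)$, which is in Arnold's list as the $J_{10}$ stratum. Then I would invoke that $f_t$ is finitely determined (indeed quasihomogeneous of finite codimension, with the higher-filtration terms lying in the Jacobian ideal up to the standard Arnold estimate) so that the higher-order terms $g - g_6$ of $w$-filtration $\geq 7$ can be removed by a smooth change of coordinates preserving the filtration — this is exactly the content of part (ii), so (i) follows from the normalization of the initial part together with (ii).

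For part (ii): this is the statement that adding a monomial of $w$-filtration $\geq 7$ to $f_t$ does not change the smooth equivalence class. The clean way is the Arnold/Mather argument: $f_t$ is quasihomogeneous of weighted degree $6$ with an isolated singularity, so by the quasihomogeneous finite determinacy theorem (or directly, the fact that the local algebra $\al E_2/Jf_t$ is spanned by monomials of $w$-filtration $\leq$ some bound, and every monomial of $w$-filtration $>6$ lies in $\ak m_2 \cdot J f_t$ plus terms we can absorb) one has $\ak m_2^N \subset \ak m_2 J f_t$ for suitable $N$, and more precisely any deformation $f_t + g$ with $fil_w(g)$ strictly larger than $fil_w(f_t)$ is right-equivalent to $f_t$ by the homotopy (Moser) method along $f_t + sg$, constructing the trivializing vector field inside $Jf_t$ using that $g \in \ak m_2 J f_t$ for filtration reasons. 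I expect the main obstacle to be making the last point rigorous with the correct filtration bookkeeping, i.e. verifying that a monomial of $w$-filtration $\geq 7$ actually lies in the ideal generated by $\partial f_t/\partial x$ (filtration $4$) and $\partial f_t/\partial y$ (filtration $5$) with coefficients of positive filtration — this is the genuine computation, and it is exactly the kind of quasihomogeneous Jacobian-ideal membership that Arnold's tables encode, so I would cite \cite{Arnold1} for it rather than redo it, as the lemma statement already announces.
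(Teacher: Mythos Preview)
The paper gives no proof of this lemma; it is stated as a direct consequence of Arnold's classification and simply cites \cite{Arnold1}. Your proposal is therefore consistent with the paper's approach: you likewise defer the key Jacobian-ideal computation to Arnold, but you supply a useful expository sketch of why the statement follows (normalizing the weight-$6$ initial form via $x \mapsto x - \lambda y^2$ and rescalings, then invoking quasihomogeneous finite determinacy for part (ii)). One small point: your normalization in (i) tacitly assumes the coefficient of $x^3$ in the initial form $g_6$ is nonzero, which is needed both for the translation to kill $x^2y^2$ and for the multiplicity of $g$ to match that of $f_t$; strictly speaking this requires justification (or is part of what one reads off from Arnold's tables), though it does not affect the application in Lemma \ref{lem_deform_j10}, where a generic term $b_1 x_1^3$ is added before the lemma is invoked.
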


We prove further that,

\begin{lem}\label{lem_deform_j10} Let $f \in \ak m_n^2$ be a polynomial with isolated singularity at $0$. Let $w_1 =2, w_2 =1, w_j = 3, \forall j \geq 3$, and let $w = (w_1, \ldots, w_n)$. If $fil_w (f) \geq 6$, then $f$ deforms to $J_{10}$. 
\end{lem}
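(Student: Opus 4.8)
The plan is to produce, for a given $f$ with $\mathrm{fil}_w(f)\ge 6$, a one‑parameter deformation $F(\cdot,\epsilon)$ with $F(\cdot,0)=f$ and $F(\cdot,\epsilon)$ a $J_{10}$–germ for all small $\epsilon\ne 0$. Since $f$ has an isolated singularity it is finitely determined, and both the hypothesis and the conclusion are right‑equivalence invariants, so I may assume $f$ is a polynomial. Set $h(x)=x_1^3+x_2^6+\sum_{j=3}^n x_j^2$; this is the member of $J_{10}$ with modulus $t=0$, it has an isolated singularity, and $\mathrm{fil}_w(h)=6$. Put $F(x,\epsilon)=f(x)+\epsilon h(x)$, so that $F(\cdot,0)=f$ and $\mathrm{fil}_w(F(\cdot,\epsilon))\ge 6$ for every $\epsilon$. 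I claim $F(\cdot,\epsilon)\in J_{10}$ for all $\epsilon$ in a punctured neighbourhood of $0$.

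First I would dispose of the ``bad'' parameters. The set of $\epsilon$ for which $F(\cdot,\epsilon)$ has a non‑isolated singularity at $0$ is Zariski closed in $\bb C$ (semicontinuity of the Milnor number in the polynomial family $F$) and omits $\epsilon=0$, hence is finite. Because $\mathrm{fil}_w(f)\ge 6$ annihilates every quadratic monomial except those in $x_3,\dots,x_n$, the Hessian of $F(\cdot,\epsilon)$ at $0$ has block form $\mathrm{diag}(0_2,\,M+\epsilon I_{n-2})$, where $M$ is the matrix of the quadratic part of $f$; as $\det(M+\epsilon I_{n-2})$ is a monic polynomial in $\epsilon$, the corank of $F(\cdot,\epsilon)$ equals $2$ for all but finitely many $\epsilon$. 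So for all $\epsilon$ in a punctured neighbourhood of $0$ the Splitting Lemma applies and gives $F(\cdot,\epsilon)\sim_{\al R} g_\epsilon(x_1,x_2)+Q_\epsilon$ with $Q_\epsilon$ non‑degenerate, $g_\epsilon\in\ak m_2^3$, and $g_\epsilon$ with an isolated singularity at $0$.

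The heart of the proof — and the step I expect to be the most delicate — is to show that the reduction does not create low weighted‑degree terms, i.e.\ that $\mathrm{fil}_w(g_\epsilon)=6$. The splitting substitutes $x_j\mapsto x_j+\phi_j(x_1,x_2)$, $j\ge 3$ (plus an irrelevant linear change among $x_3,\dots,x_n$), where $\phi$ solves $\nabla_{x''}F(x_1,x_2,x'')=0$. Since $\mathrm{fil}_w(F)\ge 6$ forbids the monomials $x_jx_1$ and $x_jx_2$ (weighted degrees $5$ and $4$), the gradient $\nabla_{x''}F$ has no part linear in $(x_1,x_2)$, and the only monomials of $F$ that are linear in $x_j$ with the rest in $(x_1,x_2)$ are $x_jx_1x_2$ and $x_jx_2^3$, both of weighted degree $6$; hence each $\phi_j$ vanishes to weighted order $\ge 3$. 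Consequently, substituting $x_j\mapsto\phi_j$ into any monomial does not lower its weighted degree, so $g_\epsilon=F(x_1,x_2,\phi(x_1,x_2))$ has all monomials of weighted degree $\ge 6$; moreover the only weighted‑degree‑$6$ monomials produced from the $x''$–involving terms of $F$ (the quadratic ones $x_ix_j,x_j^2$ and the terms $x_jx_1x_2,x_jx_2^3$) are combinations of $x_1^2x_2^2,x_1x_2^4,x_2^6$, never $x_1^3$. Therefore the coefficient of $x_1^3$ in $g_\epsilon$ equals that of $f$ plus $\epsilon$, nonzero for all but one $\epsilon$, and together with the weighted‑order bound this forces $\mathrm{fil}_w(g_\epsilon)=6$.

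Finally I would invoke Lemma \ref{lem_arnold_j10}(i): a sufficiently high jet of $g_\epsilon$ is a polynomial with an isolated singularity and weighted filtration $6$, hence right‑equivalent to $f_t$ for some $t$, necessarily a value for which $f_t$ has an isolated singularity, i.e.\ a genuine $J_{10}$–germ. Thus $F(\cdot,\epsilon)\sim_{\al R}f_t+Q_\epsilon\in J_{10}$ for all $\epsilon$ in a punctured neighbourhood of $0$, so $f$ deforms to $J_{10}$.
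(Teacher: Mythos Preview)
Your proof is correct and follows essentially the same strategy as the paper's: perturb so that the corank becomes $2$, apply the Splitting Lemma, verify that the weighted filtration survives the splitting, and then invoke Lemma~\ref{lem_arnold_j10}(i). The paper does this in two stages --- first adding $\sum_{j\ge 3}a_jx_j^2$, splitting, and only afterwards adding $b_1x_1^3+b_2x_2^6+b_3x_1x_2^4$ to force $\mathrm{fil}_w=6$ --- whereas you fold everything into the single perturbation $\epsilon(x_1^3+x_2^6+\sum_{j\ge3}x_j^2)$ and then check that the $x_1^3$ term is not killed by the splitting. For the filtration preservation the paper simply cites the algorithm in Gibson, while you supply the argument directly (the key point being $\mathrm{fil}_w(\phi_j)\ge 3=w_j$, so substituting $x_j\mapsto\phi_j$ cannot lower weighted degree). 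Your one--parameter packaging is a little cleaner in matching the paper's own definition of ``deforms to'', but otherwise the two arguments are the same.
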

\begin{proof}
First, we take the deformation as follows
$$F = f + \sum_{j\geq 3} a_j x_j^2.$$
By the Splitting lemma, we know that $F$ is smooth equivalent to $g(x_1, x_2) + \sum_{j\geq 3}\tilde{a}_j x_j^2$. Notice that $fil_w (g) \geq  fil_w (f)$ (see  Gibson \cite{Gibson}, page 126, the algorithm in the proof of the Splitting lemma). It suffices to prove that $g(x_1, x_2) + \sum_{j\geq 3}\tilde{a}_j x_j^2$ deforms to $J_{10}$. Indeed, consider the following deformation:
$$G = g(x_1, x_2) + \sum_{j\geq 3} \tilde{a}_j x_j^2 + b_1 x_1^3 + b_2 x_2^6 + b_3 x_1x_2^4.$$
It follows from Lemma \ref{lem_arnold_j10} that $g(x_1, x_2)  + b_1 x_1^3 + b_2 x_2^6 + b_3 x_1x_2^4$ is smoothly equivalent to $f_t(x_1, x_2)$ for some $t$. This implies that $G$ is $J_{10}$. 
\end{proof}

\begin{thm}\label{thm_deformto} All germs belonging to the families enclosed in the brackets in the diagram in Theorem \ref{thm_arnold2} deform to $J_{10}$. 
\end{thm}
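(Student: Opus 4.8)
The plan is to go through every family in Theorem \ref{thm_arnold2} that appears inside brackets and, for each one, exhibit a weight vector $w$ (of the form prescribed in Lemma \ref{lem_deform_j10}, i.e. $w_1=2$, $w_2=1$, $w_j=3$ for $j\geq 3$, after possibly renaming the variables so that the first two carry the relevant weights) for which the normal form has filtration $fil_w \geq 6$. Once this is checked, Lemma \ref{lem_deform_j10} immediately gives that the family deforms to $J_{10}$, which is exactly the assertion. So the proof is essentially a finite bookkeeping exercise: list the bracketed families, write down their Arnold normal forms from the table in Theorem \ref{thm_arnold2}, and compute the $w$-filtration of each monomial.

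Concretely, I would organize the argument by the series appearing in the diagram. For corank $2$ the bracketed families are $T_{2,4,6}, T_{2,5,6}, T_{2,4,5}$'s neighbors\ldots wait — more precisely $T_{2,q,r}$ with $(q,r)$ large, and the $Z$- and $W$-series tails $(Z_{12})$-type and $(W_{13})$-type; for corank $3$ they are the $T_{3,q,r}, T_{4,q,r}, T_{5,q,r}$ families in brackets, the $Q$-tail $(Q_{12})$, the $S$-tail $(S_{1,0})$, $(U_{12})$, and the corank-$\geq 4$ box $\bff O$. For each $T_{p,q,r}$ with normal form $x^p+y^q+z^r+axyz$ one assigns weight $2$ to the variable with exponent $\geq 4$ or as needed so that $x^p$, $y^q$, $z^r$ each have filtration $\geq 6$ and the mixed term $xyz$ has filtration $2+1+3=6$; the Brieskorn/Arnold constraint $\frac1p+\frac1q+\frac1r<1$ together with the fact that these are the \emph{bracketed} (hence non-minimal) $T_{p,q,r}$ guarantees the exponents are large enough. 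For $\bff O$ (corank $\geq 4$) the germ already lies in $\ak m_n^3$ in at least four variables, so with the weight $w_j=3$ on the extra variables and the quadratic-splitting step in the proof of Lemma \ref{lem_deform_j10} one still gets $fil_w\geq 6$; alternatively one first splits off all non-degenerate quadratic directions and observes that the remaining corank-$\geq4$ germ, being in $\ak m^3$, has filtration $\geq 3\cdot 3 = 9$ in the weight-$3$ variables — actually one should be slightly careful and instead deform $\bff O$ first to a corank-$3$ modal germ and then invoke the corank-$3$ cases, using transitivity of adjacency.

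The key steps, in order: (1) recall from Lemma \ref{lem_deform_j10} that it suffices, for each bracketed family, to produce the prescribed weight $w$ with $fil_w\geq 6$; (2) treat the $T_{p,q,r}$ families (corank $2$ and corank $3$) uniformly by choosing the weight so that each pure-power monomial has filtration $\geq 6$ — the mixed monomial $xyz$ automatically lands at filtration $6$ — which works precisely because bracketed $T_{p,q,r}$ have sufficiently large exponents; (3) treat the remaining sporadic tails $(Z_{12}), (W_{13}), (Q_{12}), (S_{12})$-adjacent $(S_{1,0}), (U_{12})$ by direct inspection of their normal forms, assigning weights $(2,1)$ to the corank-$2$-dominant pair of variables and $3$ to any third variable, and checking monomial by monomial; (4) handle $\bff O$ by first deforming a corank-$\geq4$ germ to a corank-$3$ modal germ (every such germ is adjacent to something in the corank-$3$ diagram) and then applying step (2)–(3), invoking transitivity of the adjacency relation. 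The main obstacle I anticipate is not any single computation but making sure the case list is \emph{complete} — i.e. that every family drawn with brackets in both diagrams is accounted for, including the implicit ``$\cdots$'' tails of each series — and, relatedly, verifying for the borderline bracketed $T_{p,q,r}$ (the ones closest to the non-bracketed region, like $(T_{2,4,6})$ or $(T_{3,3,6})$) that the exponents really are large enough that a single admissible weight makes all pure powers have filtration $\geq 6$; this is where I would be most careful to avoid an off-by-one error in the weights.
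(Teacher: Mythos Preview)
Your overall strategy of reducing to Lemma \ref{lem_deform_j10} is exactly the paper's approach, and your treatment of the bracketed $T_{p,q,r}$ families, of $Z_{12}$ and $W_{13}$, is fine: a suitable permutation of the weights $(2,1,3)$ pushes every monomial to filtration $\geq 6$. The genuine gap is in your step (3). For $Q_{12}=x^3+y^5+yz^2+axy^4$, for $S_{1,0}=x^2z+yz^2+y^5+azy^3+bzy^4$, and for $U_{12}=x^3+y^3+z^4+axyz^2$, \emph{no} permutation of the weights $(2,1,3)$ gives filtration $\geq 6$: in $Q_{12}$ and $S_{1,0}$ the monomial $y^5$ forces the $y$-weight to be $\geq 2$, and then another monomial drops below $6$; in $U_{12}$ the presence of two cubes and a quartic makes it impossible (the weight-$1$ variable would need exponent $\geq 6$). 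So ``direct inspection of their normal forms'' cannot succeed here, and you need an additional idea.

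What the paper does is precisely to manufacture that extra room by a preliminary analytic coordinate change (and, for $U_{12}$, an actual deformation). For $Q_{12}$ one substitutes $z\mapsto z+iy^2$ to kill $y^5$; for $S_{1,0}$ one substitutes $z\mapsto z+Ay^2$ with $A$ a root of $A^2+aA+1=0$ to kill $y^5$; after these changes the normal forms do satisfy $fil_{(2,1,3)}\geq 6$ and Lemma \ref{lem_deform_j10} applies. For $U_{12}$ one first deforms by adding $t^2(x+y)^2+2t(x+y)z^2$ and then performs two further coordinate changes to eliminate $z^4$, reaching filtration $6$ for the weight $(3,2,1)$. These preliminary moves are the missing ingredient in your plan; without them the filtration test simply fails. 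Your handling of $\bff O$ is also too light: ``deform to something in the corank-$3$ diagram and use transitivity'' requires knowing that the corank-$3$ germ you land on is itself adjacent to $J_{10}$, which is not automatic. The paper instead deforms a corank-$4$ germ to the explicit normal form $x^3+y^3+z^3+w^3+axyz+bxyw+cxzw+dyzw+exyzw$ (via a transversal-unfolding and complete-transversal argument) and then shows \emph{this} family deforms to $J_{10}$ by another round of coordinate changes depending on parameters $m,l$.
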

\begin{proof} 
(1) That $T_{p, q,6}, p \geq 2, q \geq 3$, $Z_{12}$ and $W_{13}$ deform to $J_{10}$ is a direct consequence of Lemma \ref{lem_deform_j10} by giving $(x, y, z)$ the weight $w= (2,1,3)$.

(2) $Q_{12}\rightarrow J_{10}$. We have $$ Q_{12}:  x^3 + y^5 + yz^2 + a xy^4.$$ 
By the change of coordinates of type $z\mapsto z + i y^2$ and leaving other coordinates fixed, we can eliminate $y^5$. That is,  $Q_{12}$ is smooth equivalent to 
$$g(x, y, z)= x^3 + yz^2 + 2iy^3z + a xy^4.$$
Let $w = (2,1,3)$ be the weight of $(x,y, z)$. Then, $fil_w (g) =6$. This implies that $g$ deforms to $J_{10}$ (see Lemma \ref{lem_deform_j10}).

(3) $S_{1,0}\rightarrow J_{10}$. Recall that $S_{1,0}$ has the normal form $x^2z+ yz^2 +y^5 +a_1zy^3 +a_2zy^4$. First, by the change of coordinate $z \mapsto z + Ay^2$, we have $S_{1,0}$ is smooth equivalent to
$$g(x,y, z) = Ay^2x^2+zx^2+bAy^6+(A^2+Aa+1)y^5+bzy^4+(2A+a)zy^3+z^2y$$
Choose $A$ such that $A^2+Aa+1=0$ to eliminate $y^5$.  Give $(x,y, z)$ the weight $w = (2,1,3)$. Obviously, $fil_w(g) = 6$. By Lemma \ref{lem_deform_j10}, $g$  deforms to $J_{10}$. 

(4) $U_{12}\rightarrow J_{10}$. Recall that $U_{12}$ has the normal form $x^3 +y^3 +z^4 + axyz^2$. Take the deformation as suggested in \cite{Brieskorn}:
$$x^3 - y^3 +z^4 + axyz^2 + t^2(x+y)^2 + 2t(x+y) z^2.$$
By the change of coordinate $ x\mapsto x-y$, we have
$$x^3 -2y^3 + 3xy^2 + z^4 + axyz^2  - ay^2z^2 + axyz^2 + 2txz^2 +t^2x^2.$$
By changing $x \mapsto x -\frac{1}{2t^2}(x^2 -3xy +3y^2 +2tz^2)$, the term $z^4$ will be eliminated, and the polynomial obtained has filtration with respect to the weight $w = (3,2,1)$ is equal to  $6$, so it deforms to $J_{10}$ (see Lemma \ref{lem_deform_j10}).

(5) $(\bff O) \rightarrow J_{10}$.  Since every germ of corank $\geq 4$ can deform to a germ of corank $4$, it suffices to prove that all corank $4$ germs deform to $J_{10}$. 

Let $g$ be a corank $4$ germ. By the Splitting lemma, we may assume (up to addition of a non-degenerate quadratic form of further variables) that $g \in \ak m_4^3$.  Consider a small deformation of $g$ of the form 
$h_\lambda = g + \lambda (x^3 + y^3 +z^3+w^3)$. Notice that for $\lambda \neq 0$ near $0$, $h_\lambda =  \lambda_1 x^3 + \lambda_2 y^3 + \lambda_3 z^3 + \lambda_4 w^3 +  \tilde{p}$  for some $\lambda_i \neq 0, i =1, \ldots, 4$ and  $\tilde{p}$ is a polynomial of order greater than or equal to $3$ which does not contain the terms $\{x^3, y^3, z^3, w^3\}$. By a change of coordinates we may assume $\lambda_i =1, \forall i = 1,\ldots, 4$. Transversal unfolding shows that $j^3(h_\lambda)$ is smooth equivalent to  
$ f_{a,b,c,d}(x, y, z, w) = x^3 +y^3 +z^3 +w^3 + axyz + bxyw + c xzw + d yzw,$
for some $a, b , c, d$ (see \cite{Gibson}, Chapter III). 

Since $Jf \ak m_4^2 + \ak m_4^5 + \langle xyzw \rangle = \ak m_4^4$, by the complete transversal method (see \cite{bkp}), every germ whose $3$-jet is equivalent to $f_{a, b, c, d}$ has $4$-jet equivalent to $ f_{a, b, c, d, e} :=f_{a, b, c, d} + e xyzw$ for some $e$. It is easy to check that $h_\lambda$ is $4$-determined, thus it is equivalent to $f_{a, b, c, d, e}$ for some $a, b, c, d, e$. 

It remains to show that $f_{a, b, c, d, e}$ deforms to $J_{10}$. First, we take the change of coordinate as follows: $x \mapsto x-z +mw+ \frac{a}{3}y, z \mapsto x+z, w \mapsto w + lx$ for $m, l \in \bb C$. Then, deform the obtained germ with $tx^2 +tw^2$. Next, we  use the algorithm of the Splitting lemma to eliminate the terms of degree $3$ containing either $x$ or $w$ or both. The resulting polynomial, let say $g_{a,b,c,d,e}$, includes monomials of filtration (with respect to the weight  $(3, 2,1,3)$) greater than or equal to $6$ except $z^4$ and $zy^3$. Since the coefficients of $z^4$ and $zy^3$ depend on two parameters $m$ and $l$, there is a closed algebraic set $S \subset \bb C^5$ of dimension less than $5$ such that for all $(a, b, c, d, e) \in \bb C^5 \setminus S$ we can choose $m, l$ such that the coefficients of $z^4$ and $zy^3$ equal to zero.  By Lemma \ref{lem_deform_j10}, $g_{a, b, c, d, e}$ deforms to $J_{10}$  for $(a, b, c, d, e) \in \bb C^5 \setminus S$. Since $\dim S < 5$, thus $g_{a, b, c, d, e}$ deforms to $J_{10}$ for all $a, b, c, d, e$. 
\end{proof}

\subsection{Classification of Lipschitz simple germs}

We are now in position to state and prove our main result. Observe that

(1) smooth simple germs are Lipschitz simple;

(2) $J_{10}$ is Lipschitz modal (see Theorem \ref{prop_j10_modal});

(3) if a germ deforms to a class of Lipschitz modal singularities, then it is also Lipschitz modal;

\begin{thm}\label{thm_corank2}
A germ $f \in \ak m^2_n$ of corank $1$ or $2$ is Lipschitz simple if and only if it is bi-Lipschitz equivalent to one of the germs in the table below:

\begin{center}
\footnotesize
\begin{longtable}{|c|c|c|c|c|}
\caption{Corank $1$ and $2$ Lipschitz simple germs}
\label{table_corank2}\\
\hline
Name & Smooth normal form & Lipschitz normal form &  Codimension & Corank \\ \hline
$A_k$        & $x^{k+1}$     & $x^{k+1}$              & $k \geq 1$ & $1$\\\hline 
$D_k$        & $x^2y +y^{k-1}$  & $x^2y +y^{k-1}$          & $k\geq 4$& \\
$E_6$        & $x^3 + y^4$  & $x^3 + y^4$               &  $6$& \\
$E_7$        & $x^3 + xy^3$     & $x^3 + xy^3$                & $7$& \\
$E_8$        & $x^3 + y^5 $  & $x^3 + y^5 $                & $8$& $2$\\
$X_9$        & $x^4+y^4 + ax^2y^2$              & $x^4+y^4 + x^2y^2$              & $9$& \\
$T_{2,4,5}$    & $x^4 + y^5 + ax^2y^2)$        & $x^4 + y^5 + x^2y^2$        & $10$ & \\
$T_{2,5,5}$	 & $ x^5 +y^5 +  ax^2 y^2 $	      & $ x^5 +y^5 +  x^2 y^2 $	       & $11$ & \\
$Z_{11}$	 & $x^3 y + y^5 +  axy^4$	   & $x^3 y + y^5 +  xy^4$&  $11$ &\\	
$W_{12}$     & $x^4  + y^5 + ax^2y^3$       & $x^4  + y^5 + x^2y^3$        & $12$ & \\
\hline 
\end{longtable}
\end{center}
\end{thm}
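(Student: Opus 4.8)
The plan is to combine Arnold's smooth classification with the splitting lemma (Lemma~\ref{splitting}) to reduce to corank $\le 2$ germs in at most two essential variables, and then to feed in the results of the previous sections: the bi-Lipschitz triviality of the families $X_9, T_{2,4,5}, T_{2,5,5}, Z_{11}, W_{12}$ (Corollary~\ref{cor_trivial}, Lemma~\ref{lem_t245}, Remark~\ref{rem_triviality}); the Lipschitz modality of $J_{10}$ (Theorem~\ref{prop_j10_modal}); the fact that every smooth modal corank-$2$ germ other than those five families deforms to $J_{10}$ (Theorem~\ref{thm_deformto} together with Theorem~\ref{thm_arnold2}, since the remaining ones either are bracketed or deform to a bracketed class); and Brieskorn's theorem \cite{Brieskorn}, by which the five families, not being bracketed, do \emph{not} deform to $J_{10}$. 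For sufficiency, $A_k, D_k, E_6, E_7, E_8$ are smooth simple, hence Lipschitz simple. Let $f$ lie in one of $X_9, T_{2,4,5}, T_{2,5,5}, Z_{11}, W_{12}$ and fix $k$ sufficiently large for $f$. The triviality results show that, over the connected set of parameters for which the corresponding normal form has an isolated singularity, the family is bi-Lipschitz trivial, so it forms a single Lipschitz orbit, represented by the Lipschitz normal form of Table~\ref{table_corank2}. Since corank and the Milnor number are upper semicontinuous on $J^k(n,1)$, every germ whose $k$-jet lies in a small enough neighbourhood $U$ of $j^kf(0)$ has corank $\le 2$ and codimension $\le \mu(f)$; by Theorems~\ref{thm_arnold1} and \ref{thm_arnold2} it is then either smooth simple or belongs to one of the finitely many modal corank-$2$ classes of codimension $\le \mu(f)$. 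A modal class $\mathcal D$ meets $U$ only if $f$ deforms to $\mathcal D$; since $f$ does not deform to $J_{10}$ by Brieskorn's theorem and every modal corank-$2$ class other than the five deforms to $J_{10}$, transitivity of adjacency forces such a $\mathcal D$ to be one of the five families. Hence $U$ meets only finitely many Lipschitz orbits — the finitely many smooth simple classes (each a single smooth, hence Lipschitz, orbit) together with at most five of our trivial families — so $f$ is Lipschitz simple; as Lipschitz simplicity depends only on the bi-Lipschitz class of a germ, the same holds for any germ bi-Lipschitz equivalent to a table entry.

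For necessity, let $f \in \ak m_n^2$ be Lipschitz simple of corank $c \le 2$. Being finitely determined, $f$ has an isolated singularity, and by Lemma~\ref{splitting} we may write $f \sim_{\al R} g + Q$ with $g \in \ak m_c^3$ having an isolated singularity and the same codimension as $f$, $Q$ a non-degenerate quadratic form in the remaining variables. If $c \le 1$ then $g = x_1^{k+1}$ and $f \sim_{\al R} A_k$. If $c = 2$ and $g$ is smooth simple, then $g \in \{D_k, E_6, E_7, E_8\}$ and $f$ is smoothly, hence bi-Lipschitz, equivalent to the corresponding entry of the table. If $c = 2$ and $g$ is smooth modal, then by Theorems~\ref{thm_arnold2} and \ref{thm_deformto} either $g$ lies in one of $X_9, T_{2,4,5}, T_{2,5,5}, Z_{11}, W_{12}$, or $g$ deforms to $J_{10}$; in the latter case $g + Q$, and therefore $f$, deforms to $J_{10}$, so $f$ is Lipschitz modal — contradicting the hypothesis. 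Hence $g$ lies in one of the five families, and by their bi-Lipschitz triviality $f \sim_{\al R} g + Q$ is bi-Lipschitz equivalent to the associated Lipschitz normal form. Thus every Lipschitz simple corank-$\le 2$ germ is bi-Lipschitz equivalent to an entry of Table~\ref{table_corank2}.

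It remains to note that the entries are pairwise bi-Lipschitz inequivalent: the corank and the Milnor number (bi-Lipschitz invariants by Section~\ref{section4} and Milnor's theorem), together with the multiplicity $m_f$ and the singular set $\Sigma_{H_f}$ of the algebraic tangent cone (Theorem~\ref{thm_splitting_weak}), separate all of them except pairs such as $D_j$ versus $E_j$ for $j = 6,7,8$ and $T_{2,5,5}$ versus $Z_{11}$, which are distinguished because their geometric tangent cones are non-homeomorphic sets and tangent cones are a bi-Lipschitz invariant by Sampaio's theorem \cite{Sampaio}. The main difficulty of the proof is precisely the bookkeeping in the sufficiency step — pinning down which modal classes of small codimension can occur near $j^kf(0)$ and checking that none of them is Lipschitz modal; once the triviality theorems of Section~\ref{section8} and Brieskorn's non-adjacency to $J_{10}$ are in hand, the statement is an assembly of the earlier results.
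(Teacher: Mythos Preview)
Your overall argument for sufficiency and necessity is essentially the paper's: upper semicontinuity of corank and Milnor number to restrict what classes meet a neighbourhood of $j^kf(0)$, Brieskorn's non-adjacency to $J_{10}$ to exclude the bracketed classes, bi-Lipschitz triviality of the five unimodal families from Section~\ref{section8}, and Theorem~\ref{thm_deformto} plus Theorem~\ref{prop_j10_modal} for the modal direction. That part is fine.

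The gap is in your final step, where you separate the table entries. You invoke Sampaio's theorem on geometric tangent cones, but for a germ $F=f+Q$ with $Q$ a nondegenerate quadratic form in $n-c>0$ variables one has $H_F=Q$, so $C_{\ak g}(F,0)=\{Q=0\}$ regardless of $f$; Sampaio's theorem applied to $V_F$ therefore distinguishes nothing here. The relevant invariant of $F$ is not the tangent cone of $V_F$ but the data extracted from the non-quadratic part via Theorem~\ref{thm_splitting_weak}, namely $m_f$ and $\Sigma_{H_f}$. In particular, $T_{2,5,5}$ and $Z_{11}$ \emph{are} separated by $\Sigma_{H_f}$ (for $x^5+y^5+x^2y^2$ one gets $H_f=x^2y^2$ with $\Sigma_{H_f}=\{xy=0\}$, two lines; for $x^3y+y^5+xy^4$ one gets $H_g=x^3y$ with $\Sigma_{H_g}=\{x=0\}$, a single line), and this is exactly what the paper does --- so that pair should not be on your exception list. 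For $D_j$ versus $E_j$ ($j=6,7,8$) your instinct that something extra is needed is correct: both have $m_f=3$ and $\Sigma_{H_f}$ a single line, so Theorem~\ref{thm_splitting_weak} does not separate them, and your tangent-cone argument fails for the reason above. The paper is in fact sloppy at this point (its claim that ``except $T_{2,5,5}$ and $Z_{11}$, they all have different codimensions'' overlooks $D_j$ versus $E_j$); the honest fix is to appeal to the topological type of the link or Milnor fibre, which is a bi-Lipschitz invariant and is well known to distinguish the $ADE$ classes pairwise.
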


\begin{proof}
By Theorem \ref{thm_deformto}, germs of corank $2$ which are not equivalent to germs in the list deform to $J_{10}$ and hence are not Lipschitz simple.

The singularities $A_k, (k\geq 1)$, $D_k, (k \geq 4)$, $E_6$, $E_7$ and $E_8$ are smooth simple germs (see Theorem \ref{thm_arnold1}), they are obviously Lipschitz simple germs. 

Now we prove that the germs in $X_{9}$ are Lipschitz simple. Let $f\in X_{9}$ and $k \in \bb N$ be sufficiently large for $f$. Since corank and codimension of singularities are upper semicontinuous, the only singularities contained in a small enough neighbourhood $U$ of $j^kf(0)$ are $X_9$ or $ADE$-singularities. It is shown in Corollary \ref{cor_trivial} that $X_9$ is a bi-Lipschitz trivial family. We know, moreover, that $ADE$-singularities belong to only finitely many bi-Lipschitz classes. This implies that $f$ is a Lipschitz simple germ. 

The proof for $T_{2,4,5}$ is as follows. Let $g \in T_{2,4,5}$ and $m \in \bb N$ be sufficently large for $g$. Then, again the only singularities contained in the small neighbourhood $V$ of $j^mg(0)$ are those with corank $\leq 2$ and codimension $\leq 10$.  By Brieskorn's result \cite{Brieskorn}, $T_{2,4,5}$ cannot deform to $J_{10}$. Shrinking $V$ if necessary, we can assume that $V$ does not contain germs of $J_{10}$. Thus, singularities in $V$ are either contained in $T_{2, 4,5}$ or $X_9$ or  are $ADE$-singularities. It is proved in Lemma \ref{lem_t245} that $T_{2,4,5}$ is bi-Lipschitz trivial. We also know that all germs in $X_9$ and $ADE$-singularities belong to finitely many bi-Lipschitz classes. Therefore, $g$ is Lipschitz simple. 

The proof for the other germs is similar by induction on the codimension and using the fact that they belong to bi-Lipschitz trivial families (see Corollary \ref{cor_trivial}, Lemma \ref{lem_trivial_T245} and Remark \ref{rem_triviality}) and none of them deform to a class of Lipschitz modal singularities (see Brieskorn's result \cite{Brieskorn}).

It remains to prove that every normal form in the third column of Table \ref{table_corank2} represents a unique bi-Lipschitz class. Notice that except $T_{2,5,5}$ and $Z_{11}$, they all have different codimensions.  The normal forms of $T_{2,5,5}$ and $Z_{11}$ are given by $f = x^5 +y^5 +  x^2 y^2$ and $g = x^3 y + y^5 +  xy^4$. It is easy to check that $\Sigma_{H_f} = \{(x, y) \in \bb C^2: xy = 0\}$ while $\Sigma_{H_g} = \{(x, y)\in \bb C^2: x = 0\}$. Therefore, by Theorem \ref{thm_splitting_weak},  $T_{2,5,5}$ and $Z_{11}$ are of different bi-Lipschitz types.  
\end{proof}

\begin{thm}\label{thm_corank3}
A germ $f \in \ak m^2_n$ of corank $3$ is Lipschitz simple if and only if it is bi-Lipschitz equivalent to one of the germs in the table below:

\begin{center}
\footnotesize
\begin{longtable}{|c|c|c|c|c|}
\caption{Corank $3$ Lipschitz simple germs}
\label{table_corank3}\\
\hline
Name & Smooth normal form  & Lipschitz normal form             & Codimension & Corank \\ \hline
$P_8 = T_{3,3,3}$        & $x^3+y^3+z^3 + axyz$         & $x^3+y^3+z^3 + xyz$         & $8$   & \\
$T_{3,3,4}$  & $x^3 + y^3 + z^4 + a xyz $    & $x^3 + y^3 + z^4 +  xyz $    & $9$  &  \\
$T_{3,3,5}$  & $x^3 +  y^3 + z^5 + a xyz $ & $x^3 +  y^3 + z^5 +  xyz$  & $10$ &  \\
$T_{3,4,5}$  & $x^3 +  y^4 + z^5 +  a xyz  $  &$x^3 +  y^4 + z^5 +  xyz $  & $11$ &  \\
$T_{3,5,5}$  & $x^3 +  y^5 + z^5 +  axyz $  & $x^3 +  y^5 + z^5 +  xyz $  & $12$ &  \\
$T_{4,4,4}$  & $x^4 +  y^4 + z^4 +  axyz $  & $x^4 +  y^4 + z^4 +  xyz $  & $11$ &  \\
$T_{4,4,5}$  & $x^4 +  y^4 + z^5 +  axyz $ & $x^4 +  y^4 + z^5 +  xyz $  & $12$ &  \\
$T_{4,5,5}$  & $x^4 +  y^5 + z^5 +  axyz $ & $x^4 +  y^5 + z^5 +  xyz $  & $13$ &  $3$ \\
$T_{5,5,5}$  & $x^5 +  y^5 + z^5 +  axyz $  & $x^5 +  y^5 + z^5 +  xyz $  & $14$ &  \\
$Q_{10}$     & $x^3 + y^4 + yz^2 + axy^3$     & $x^3 + y^4 + yz^2 + xy^3$  & $10$  & \\
$Q_{11}$     & $x^3 + y^2z + xz^3 + az^5$ & $x^3 + y^2z + xz^3 + z^5$   & $11$  &  \\
$S_{11}$	 & $x^4 + y^2z + xz^2 + a x^3z$ & $x^4 + y^2z + xz^2 + x^3z$ & $11$ &  \\
$S_{12}$	 & $x^2y + y^2z + xz^3 + a z^5$ & $x^2y + y^2z + xz^3 + z^5$  & $12$ & \\
\hline 
\end{longtable}
\end{center}
\end{thm}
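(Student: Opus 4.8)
The proof will follow the three-part scheme of the proof of Theorem \ref{thm_corank2}: rule out everything outside the table by exhibiting a deformation to $J_{10}$, establish Lipschitz simplicity of the listed families by an induction on the Milnor number resting on bi-Lipschitz triviality, and finally separate the listed normal forms by bi-Lipschitz invariants. For the ``only if'' direction I would recall from Theorem \ref{thm_arnold2} that every finitely determined corank-$3$ germ is smoothly equivalent, up to a non-degenerate quadratic form in the remaining variables, to one of the normal forms in Arnold's corank-$3$ diagram, and that every corank-$3$ family drawn in brackets there --- and hence every corank-$3$ modal family not drawn, each of which deforms to a bracketed one --- deforms to $J_{10}$ by Theorem \ref{thm_deformto}. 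Since $J_{10}$ is Lipschitz modal (Theorem \ref{prop_j10_modal}) and a germ deforming to a Lipschitz modal family is itself Lipschitz modal, no corank-$3$ germ outside the list in Table \ref{table_corank3} is Lipschitz simple.

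For the ``if'' direction the plan is to induct on the Milnor number, exactly as was done above for $X_9$ and $T_{2,4,5}$. Fix $f$ in one of the families of Table \ref{table_corank3} and let $k$ be sufficiently large for $f$; since corank and the Milnor number are upper semicontinuous, a small enough neighbourhood $U$ of $j^kf(0)$ in $J^k(n,1)$ meets only jets of $k$-determined germs of corank $\leq 3$ and Milnor number $\leq\mu(f)$. By Arnold's classification and Brieskorn's adjacency tables \cite{Brieskorn}, these germs lie in finitely many smooth families; shrinking $U$ so that none of these families deforms to $J_{10}$, they are the $ADE$-singularities, the corank-$1$ and corank-$2$ Lipschitz simple families of Theorem \ref{thm_corank2}, and the corank-$3$ families of Table \ref{table_corank3} of Milnor number $\leq\mu(f)$. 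Each of these consists of a single bi-Lipschitz class: $P_8$, $S_{11}$, $S_{12}$ by Corollary \ref{cor_trivial}; the $T_{p,q,r}$ with $3\leq p,q,r\leq 5$ and $Q_{10}$ by Remark \ref{rem_triviality}; $Q_{11}$ by Lemma \ref{lem_q11trivial}; and the corank $\leq 2$ ones by Corollary \ref{cor_trivial}, Lemma \ref{lem_t245} and Remark \ref{rem_triviality} together with Theorem \ref{thm_corank2}. Hence $U$ meets only finitely many Lipschitz orbits and $f$ is Lipschitz simple.

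It remains to check that the rows of Table \ref{table_corank3} are pairwise bi-Lipschitz inequivalent. As the corank is the same throughout, the Milnor number already separates every pair except those inside the groups $\{T_{3,3,5},Q_{10}\}$, $\{T_{3,4,5},T_{4,4,4},Q_{11},S_{11}\}$ and $\{T_{3,5,5},T_{4,4,5},S_{12}\}$; and all the germs in these groups have multiplicity $3$, so by Theorem \ref{thm_splitting_weak} it suffices to compare the sets $\Sigma_{H_f}$. A short computation gives: for $Q_{10}$ one has $\Sigma_{H_f}=\{x=0\}$, which is $2$-dimensional, whereas for $T_{3,3,5}$ it is a line; $\Sigma_{H_f}$ is a single line for $S_{12}$, a union of two lines for $T_{3,5,5}$, and the union of the three coordinate axes for $T_{4,4,5}$; and it is a union of two lines for $T_{3,4,5}$, the union of the three coordinate axes for $T_{4,4,4}$, and a single line for each of $Q_{11}$ and $S_{11}$. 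Since the dimension of $\Sigma_{H_f}$ and the number of its irreducible components are invariant under bi-Lipschitz equivalence of set germs, this distinguishes every pair except $(Q_{11},S_{11})$.

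The pair $(Q_{11},S_{11})$ is the main obstacle: these two germs have the same corank, Milnor number, multiplicity, tangent cone and singular set of the tangent cone, so every invariant produced in Sections \ref{section4}--\ref{sec5} fails to separate them. Here one must invoke the finer invariant given by the log canonical threshold of the Jacobian ideal, whose bi-Lipschitz invariance is due to Bivi\`a-Ausina and Fukui \cite{Fui}, and verify by an explicit computation that $\lct(JQ_{11})\neq\lct(JS_{11})$. Apart from this, the only point requiring care is the bookkeeping in the ``if'' direction, where one must read off from Brieskorn's adjacency results that no jet near $j^kf(0)$ lies in a family adjacent to $J_{10}$.
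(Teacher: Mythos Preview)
Your proof follows the same three-step scheme as the paper's: rule out germs outside the table via deformation to $J_{10}$ (Theorem \ref{thm_deformto}), establish Lipschitz simplicity of the listed families via bi-Lipschitz triviality (Corollary \ref{cor_trivial}, Remark \ref{rem_triviality}, Lemma \ref{lem_q11trivial}) together with Brieskorn's non-adjacencies, and finally separate the rows by codimension, by $\Sigma_{H_f}$ via Theorem \ref{thm_splitting_weak}, and --- for the stubborn pair $(Q_{11},S_{11})$ --- by the log canonical threshold of Bivi\`a-Ausina and Fukui. The paper proceeds identically, recording the relevant $\Sigma_{H_f}$ in a short table and isolating $(Q_{11},S_{11})$ in a separate lemma (Lemma \ref{lem_Q11_S11}).

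There is one computational slip in your separation step. For $Q_{10}$ you assert $\Sigma_{H_f}=\{x=0\}$, a plane; but the cubic part of $Q_{10}$ is $H_f=x^3+yz^2$, with gradient $(3x^2,\,z^2,\,2yz)$, so in fact $\Sigma_{H_f}=\{x=z=0\}$ is the $y$-axis, a single line. Your dimension argument for the pair $(T_{3,3,5},Q_{10})$ therefore does not go through as written. The paper's own table distinguishes this pair by counting irreducible components of $\Sigma_{H_f}$ rather than by dimension; you should correct your computation for $Q_{10}$ and recast the argument accordingly. Apart from this, your proposal matches the paper's proof.
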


\begin{proof}
By Theorem \ref{thm_deformto}, all germs of corank 3 which are not in the above list deform to $J_{10}$, hence they are Lipschitz modal. We know from Corollary \ref{cor_trivial}, Lemma \ref{lem_q11trivial} and Remark \ref{rem_triviality} that every singularity class in the above list is bi-Lipschitz trivial. Moreover, by the result of Brieskorn \cite{Brieskorn} none of the singularities are adjacent to any class of Lipschitz modal singularities so by arguments as in the proof of Theorem \ref{thm_corank2}, they are Lipschitz simple. 

Now let us prove that every normal form in  the third column of Table \ref{table_corank3} represents a unique bi-Lipschitz class. Notice that except $Q_{11}$ and $S_{11}$, all germs have different bi-Lipschitz type, for one of the bi-Lipschitz invariants, namely either the codimension or the singular part of the tangent cone, associated with them are different. We list the invariants for these germs below (see Table \ref{table_corank3_proof}).
\begin{center}
\footnotesize
\begin{longtable}{|c|c|c|c|c|}
\caption{Singular sets of the algebraic tangent cone of normal forms}
\label{table_corank3_proof}\\
\hline
Name & Lipschitz normal form ($f$)  & $\Sigma_{H_f}$ & Codimension \\ \hline
$T_{3,3,5}$  & $x^3 +  y^3 + z^5 +  xyz $ & $y$-axis $\cup$ $z$-axis & $10$   \\
$Q_{10}$     & $x^3 + y^4 + yz^2 + xy^3$     & $y$-axis  & $10$   \\
\hline
$T_{3,4,5}$  & $x^3 +  y^4 + z^5 +  xyz  $  &$y$-axis $\cup$ $z$-axis  & $11$  \\
$T_{4,4,4}$  & $x^4 +  y^4 + z^4 +  xyz $  & $x$-axis $\cup$ $y$-axis $\cup$ $z$-axis & $11$   \\
$Q_{11}$     & $x^3 + y^2z + xz^3 + z^5$ & $y$-axis   & $11$    \\
$S_{11}$	 & $x^4 + y^2z + xz^2 + x^3z$ & $x$-axis & $11$   \\
\hline
$T_{3,5,5}$  & $x^3 +  y^5 + z^5 +  xyz $  & $y$-axis $\cup$ $z$-axis  & $12$   \\
$T_{4,4,5}$  & $x^4 +  y^4 + z^5 +  xyz $ & $x$-axis $\cup$ $y$-axis $\cup$ $z$-axis  & $12$  \\
$S_{12}$	 & $x^2y + y^2z + xz^3 + z^5$ &  $z$-axis  & $12$  \\
\hline 
\end{longtable}
\end{center}
Lemma \ref{lem_Q11_S11} shows that the bi-Lipschitz types of $Q_{11}$ and $S_{11}$ are different.
\end{proof}

\begin{lem}\label{lem_Q11_S11} $Q_{11}$ and $S_{11}$ are not in the same bi-Lipschitz class.
\end{lem}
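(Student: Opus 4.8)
The plan is to distinguish the two classes by the log canonical threshold, whose bi-Lipschitz invariance for ideals is due to Bivi\`a-Ausina and Fukui \cite{Fui}. One first observes that the invariants already available do not help here: $Q_{11}$ and $S_{11}$ both have codimension $11$, and the singular set of the algebraic tangent cone of either Lipschitz normal form is a single coordinate line, hence these sets are bi-Lipschitz homeomorphic, so Theorem \ref{thm_splitting_weak} yields nothing.

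Recall that for $h\in\al E_n$ one has $\lct(h)=\sup\{c>0:\ |h|^{-2c}\text{ is locally integrable at }0\}$, with an analogous notion $\lct(I)$ for an ideal $I\subset\al E_n$. If $f$ and $g$ are bi-Lipschitz right equivalent via a homeomorphism $\varphi$, then $|f\circ\varphi|=|g|$, so the principal ideals $(f)$ and $(g)$ are bi-Lipschitz equivalent as ideals (and by Lemma \ref{lem_invariant_derivative} so are the Jacobian ideals $Jf$ and $Jg$); by \cite{Fui} it follows that $\lct(f)=\lct(g)$. Thus it suffices to show that the Lipschitz normal forms $f=x^3+y^2z+xz^3+z^5$ of $Q_{11}$ and $g=x^4+y^2z+xz^2+x^3z$ of $S_{11}$ satisfy $\lct(f)\neq\lct(g)$.

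To compute these I would use the Newton-polyhedron formula for the log canonical threshold of a Newton non-degenerate germ. Giving $(x,y,z)$ the weights $(\tfrac13,\tfrac{7}{18},\tfrac29)$ makes $x^3,\,y^2z,\,xz^3$ have weighted degree $1$ while $z^5$ has weighted degree $\tfrac{10}{9}>1$; the weighted-degree-one part $x^3+y^2z+xz^3$ has an isolated singularity, is non-degenerate, and the diagonal crosses its Newton boundary on the main facet, so $\lct(f)=\min\bigl(1,\ \tfrac13+\tfrac{7}{18}+\tfrac29\bigr)=\tfrac{17}{18}$. Similarly the weights $(\tfrac14,\tfrac{5}{16},\tfrac38)$ make $x^4,\,y^2z,\,xz^2$ have weighted degree $1$ and $x^3z$ weighted degree $\tfrac98>1$, and the same reasoning gives $\lct(g)=\min\bigl(1,\ \tfrac14+\tfrac{5}{16}+\tfrac38\bigr)=\tfrac{15}{16}$. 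Since $\tfrac{17}{18}\neq\tfrac{15}{16}$, $Q_{11}$ and $S_{11}$ lie in distinct bi-Lipschitz classes.

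The step I expect to require the most care is the last one: one must check that adjoining the monomials $z^5$, respectively $x^3z$ — which are of higher weight than the chosen quasihomogeneous leading parts — alters neither the Newton polyhedron nor the non-degeneracy of those leading parts, so that the Newton-polyhedron formula genuinely applies to $f$ and $g$ themselves; and one should make precise which log canonical threshold (that of the principal ideal or that of the Jacobian ideal) is the bi-Lipschitz invariant supplied by \cite{Fui}. If one prefers to work with the Jacobian ideal, $\lct(Jf)$ and $\lct(Jg)$ can instead be read off from the Newton polyhedra of the gradient ideals, with the same conclusion. Once this is pinned down the numerics above complete the proof.
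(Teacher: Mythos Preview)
Your overall strategy is the paper's own --- invoke the bi-Lipschitz invariance of the log canonical threshold from \cite{Fui} and compute via Newton data --- but there is a genuine gap: you never deal with the stabilising quadratic form. The classes $Q_{11}$ and $S_{11}$ live in $\ak m_n^2$ for every $n\geq 3$; their members are bi-Lipschitz equivalent to $f+Q$ and $g+Q$ with $Q=w_1^2+\dots+w_{n-3}^2$, and it is \emph{these} germs that must be separated. Your computation $\lct(f)=\tfrac{17}{18}\neq\tfrac{15}{16}=\lct(g)$ is correct for the three-variable representatives, but it does not survive stabilisation. By the Thom--Sebastiani formula for log canonical thresholds one has $\lct(f+Q)=\min\big(1,\lct(f)+\lct(Q)\big)$, and already for a single extra variable $\lct(w^2)=\tfrac12$, so $\lct(f+Q)=\lct(g+Q)=1$ for every $n\geq 4$. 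Thus the principal-ideal $\lct$ cannot distinguish $Q_{11}$ from $S_{11}$ once a quadratic part is present.

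This is exactly why the paper works with the Jacobian ideal rather than the principal ideal: $\lct$ of a non-principal ideal is not capped at $1$, and by Totaro's additivity (Lemma~3.2 in \cite{Totaro}) one has $\lct(J(f+Q))=\lct(Jf)+\lct(JQ)$, so $\lct(Jf)\neq\lct(Jg)$ propagates to all $n$. The paper computes $\lct(Jf)=\tfrac32$ and $\lct(Jg)=\tfrac43$ via the Newton polyhedron of the gradient ideal (using Corollary~4.9 of \cite{Fui} after checking strong non-degeneracy). You mention the Jacobian-ideal route as an optional alternative; in fact it is \emph{necessary}, and you also need an additivity statement like Totaro's to pass from $Jf$, $Jg$ to $J(f+Q)$, $J(g+Q)$. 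Once you switch to $\lct(Jf)$, $\lct(Jg)$ and insert that reduction, the argument goes through.
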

\begin{proof}
Set $f(x, y, z) =  x^3 + y^2z + xz^3 + z^5$ and $g(x, y, z) = x^4 + y^2z + xz^2 + x^3z$. Recall that the germs in $Q_{11}$ are bi-Lipschitz equivalent to $f + Q$, and germs in $S_{11}$ are bi-Lipschitz equivalent to $g + Q$ where $Q = w_1^2 + \ldots + w_{n-3}^2$. We need to show that $f+Q$ and $g+Q$ are not bi-Lipschitz equivalent. In \cite{Fui}, Theorem 4.2, Bavi\`a-Ausina and Fukui proved that the log canonical threshold of the Jacobian ideal, let us denote by $\lct$, is a bi-Lipschitz invariant. We show that $\lct(J(f+Q))  \neq \lct(J(g+Q))$. By Lemma 3.2 in Totaro \cite{Totaro}, $\lct(J(f+Q)) = \lct(Jf) + \lct (JQ)$ and $\lct(J(g+Q)) = \lct(Jg) + \lct (JQ)$. Thus, it suffices to prove that $\lct (Jf) \neq \lct(Jg)$. To calculate the log canonical threshold of a Jacobian ideal we follow the method suggested in \cite{Fui}  using the Newton polyhedron. Let us start with computing $\lct(Jf)$.

Set $$I = Jf = \langle 3x^2 + z^3, 2yz, y^2 + 3xz^2 +5z^4\rangle .$$
Let $I^0$ denotes the monomial ideal associated with the Newton polyhedron $\Gamma_+(I)$ of $I$ (see the definitions in \cite{Fui}, page 8.1). Then, $I^0 =  \langle x^2, yz, y^2, z^3 \rangle$. By formula (4.1) in \cite{Fui}, we have $\lct(I^0) = \frac{3}{2}$. Consider the map 
$$ \alpha = (\alpha_1, \alpha_2, \alpha_3)= \bigg(\frac{\partial f}{\partial x},  \frac{\partial f}{\partial y}, \frac{\partial f}{\partial z}\bigg).$$
It is obvious that $\alpha_1, \alpha_2, \alpha_3 \in I$ and $\Gamma_+(\alpha) = \Gamma_+(I^0)$. Moreover,  $\alpha$ is strongly non-degenerate (see \cite{Fui}, Definition 4.6). By Corollary 4.9 of \cite{Fui}, $\lct(I) = \lct(I^0) =\frac{3}{2}$.

The calculation of $\lct(Jg)$ is similar. Set 
$$J = Jg = \langle 4x^3 + z^2 + 3x^2z, 2yz, y^2 +2xz + x^3\rangle .$$
Then the monomial ideal associated with the Newton polyhedron $\Gamma_+(J)$ is $J^0  =  \langle x^3, yz, y^2, z^2 \rangle$. It is obvious by formula (4.1) in \cite{Fui} that $\lct(J^0) = \frac{4}{3}$. Consider the map
$$\beta = (\beta_1, \beta_2, \beta_3)= \bigg(\frac{\partial g}{\partial x},  \frac{\partial g}{\partial y}, \frac{\partial g}{\partial z}\bigg).$$
Since $\beta_1, \beta_2, \beta_3 \in J$, $\Gamma_+(\beta) = \Gamma_+(J^0)$ and $\beta$ is strongly non-degenerate, by  Corollary 4.9 of \cite{Fui} we have $\lct(J) = \lct(J^0) =\frac{4}{3}$ 
\end{proof}

\begin{thm}\label{thm_corank4} Every germ of corank $\geq 4$ is Lipschitz modal. 
\end{thm}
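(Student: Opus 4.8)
The plan is to obtain Theorem~\ref{thm_corank4} as a direct corollary of Theorem~\ref{thm_deformto} together with the observations (2) and (3) recorded just before Theorem~\ref{thm_corank2}. First I would recall that the class of germs of corank $\geq 4$ is precisely the class $\bff{O}$ appearing, inside brackets, in Arnold's diagram of Theorem~\ref{thm_arnold2}. Theorem~\ref{thm_deformto} asserts that every germ belonging to a bracketed class deforms to $J_{10}$; in particular, its proof treats $\bff{O}$ by first reducing, via the Splitting lemma (Lemma~\ref{splitting}), to a corank-$4$ germ in $\ak m_4^3$, then normalizing its $4$-jet to a Brieskorn-type form $f_{a,b,c,d,e}$ through the complete transversal method, and finally producing an explicit deformation to $J_{10}$ by a parameter-dimension argument. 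So every finitely determined germ of corank $\geq 4$ deforms to $J_{10}$.

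Next I would combine this with the Lipschitz modality of $J_{10}$. By Theorem~\ref{prop_j10_modal}, the members $f_\lambda$ and $f_{\lambda'}$ of $J_{10}$ are bi-Lipschitz equivalent only when $\lambda^3=\pm\lambda'^3$, so every neighbourhood of a sufficiently high jet of a member of $J_{10}$ meets infinitely many Lipschitz orbits; hence $J_{10}$ is a Lipschitz modal class, which is observation (2). By observation (3), any germ that deforms to a Lipschitz modal class is itself Lipschitz modal. Applying this to an arbitrary germ of corank $\geq 4$, which deforms to $J_{10}$ by the previous step, I would conclude that it is Lipschitz modal, finishing the proof.

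I do not expect a genuine obstacle here: the substance of the statement lies entirely in the already-established adjacency $\bff{O}\to J_{10}$ (Theorem~\ref{thm_deformto}) and in the bi-Lipschitz modality of $J_{10}$ (Theorem~\ref{prop_j10_modal}). The only point I would be careful to state is that, as elsewhere in the paper, the assertion is to be read for finitely determined germs, so that the phrase ``$k$ sufficiently large for $f$'' in the definition of Lipschitz simplicity is meaningful; such germs of corank $\geq 4$ do exist, for instance $x_1^4+x_2^4+x_3^4+x_4^4$, whose Hessian at the origin vanishes identically while its Jacobian ideal $\langle x_1^3,x_2^3,x_3^3,x_4^3\rangle$ is $\ak m_4$-primary.
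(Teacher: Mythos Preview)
Your proposal is correct and matches the paper's own proof essentially verbatim: the paper simply invokes Theorem~\ref{thm_deformto} to say that every corank $\geq 4$ germ deforms to $J_{10}$, and then concludes from the Lipschitz modality of $J_{10}$. Your additional remarks about finite determinacy and the internal workings of the $\bff{O}$ case are accurate but go beyond what the paper records at this point.
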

\begin{proof}
By Theorem \ref{thm_deformto}, all germ of corank $\geq 4$ deform to $J_{10}$. Since $J_{10}$ are Lipschitz modal, the result follows.
\end{proof}

A direct consequence of the above theorems is
\begin{cor}
(1) A germ in $\ak m^2_n$ is Lipschitz modal if and only if it deforms to $J_{10}$.

(2) Germs in $J_{10}$ are Lipschitz modal germs of the smallest codimension.

(3) Every germ of smooth modality greater than or equal to $2$ is Lipschitz modal.
\end{cor}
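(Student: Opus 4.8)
The three statements follow by assembling results already established. For (1): one direction is Theorem \ref{thm_deformto} together with observation (3) before Theorem \ref{thm_corank2} (plus Theorem \ref{prop_j10_modal}, which shows $J_{10}$ itself is Lipschitz modal). Indeed, Theorems \ref{thm_corank2}, \ref{thm_corank3} and \ref{thm_corank4} together exhaust all corank possibilities: a germ $f\in\ak m_n^2$ with isolated singularity has corank $1,2,3$ or $\geq 4$, and in each case the theorems say precisely that $f$ is Lipschitz simple iff it lies on the listed finite set of bi-Lipschitz classes, and the complementary germs are exactly the ones shown in Theorem \ref{thm_deformto} to deform to $J_{10}$. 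So the plan is: first, note that by the classification theorems a germ is Lipschitz modal iff it is (bi-Lipschitz equivalent to) a germ \emph{not} appearing in Table \ref{table_corank2} or Table \ref{table_corank3}, i.e. it is smooth modal or smooth simple but of a type not in those tables — but inspection shows every smooth simple germ appears in the tables, so ``not in the tables'' means smooth modal. Then, by Theorem \ref{thm_deformto}, every smooth modal germ not itself in $J_{10}$ deforms to $J_{10}$, and germs in $J_{10}$ trivially deform to $J_{10}$. Conversely, if $f$ deforms to $J_{10}$, then since $J_{10}$ is Lipschitz modal (Theorem \ref{prop_j10_modal}), observation (3) gives that $f$ is Lipschitz modal.

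For (2): by part (1), the Lipschitz modal germs are exactly those deforming to $J_{10}$. Every such germ either is in $J_{10}$ (codimension $10$) or is adjacent to $J_{10}$, hence has strictly larger Milnor number by upper semicontinuity of the codimension under deformation — in the complex case the Milnor number is a bi-Lipschitz invariant (it is a topological invariant, as recalled in Section \ref{sec2}). So the minimal codimension among Lipschitz modal germs is $10$, attained precisely on $J_{10}$. The plan is just to cite adjacency $\Rightarrow$ strict drop of codimension in the deformation direction, equivalently strict increase of codimension of the more degenerate germ, so nothing below codimension $10$ can be Lipschitz modal, and $J_{10}$ realizes $10$.

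For (3): if $f$ has smooth modality $\geq 2$, then $f$ is smooth bimodal or higher. By Theorem \ref{thm_arnold2} and the adjacency analysis of Section \ref{sec9} (in particular Lemma \ref{lem_deform_j10} and Theorem \ref{thm_deformto}), every smooth bimodal germ deforms to $J_{10}$; germs of modality $>2$ deform to bimodal ones, hence also to $J_{10}$. By part (1) they are therefore Lipschitz modal. So the plan is: reduce to the bimodal case by the adjacency chain, invoke Theorem \ref{thm_deformto} for the bimodal germs, and conclude via (1). I do not anticipate a genuine obstacle here — the only point requiring a little care is making explicit that every smooth simple germ really does occur in Tables \ref{table_corank2}–\ref{table_corank3} (so that ``Lipschitz modal'' and ``smooth modal and deforms to $J_{10}$'' line up exactly), which is immediate from comparing with Theorem \ref{thm_arnold1}.
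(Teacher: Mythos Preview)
Your proposal is correct and matches the paper's approach: the paper simply records this corollary as ``a direct consequence of the above theorems'' with no written proof, and what you have done is spell out that derivation explicitly. Two small remarks: in (2), upper semicontinuity only gives $\mu(f)\geq 10$, not strict inequality, but that is all you need for the statement as written (which asserts $J_{10}$ attains the minimum, not that it is the unique class doing so); and for (3) there is a slightly shorter route---every class in Tables \ref{table_corank2} and \ref{table_corank3} has smooth modality $\leq 1$, so any germ of smooth modality $\geq 2$ is automatically outside the tables and hence Lipschitz modal by the classification theorems directly, without needing to pass through the deformation to $J_{10}$ again.
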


\subsection*{Acknowledgements} We would like to thank  D. Trotman for suggesting the problem, and  A. Fr\"uhbis-Kr\"uger for  her useful suggestion on the proof that singularities of corank $\geq 4$ are Lipschitz modal. We would also like to thank H. D. Nguyen and A. Fernandes for their interest and helpful discussion.  The first author was supported by FAPESP grant $2016/14330 - 0$. The second author was supported by FAPESP  grant $2014/00304 - 2$ and CNPq grant $306306/2015 - 8$. The third author was supported by  FAPESP grant $2015/12667 - 5$.

\end{document}